\documentclass[11pt,reqno]{amsart}
\usepackage{amsmath,amsthm, amscd, amssymb, amsfonts,color,mathtools}
\usepackage{hhline}
\usepackage[mathscr]{eucal}
\usepackage{tikz-cd}
\usepackage[active]{srcltx}
\usepackage{enumitem}
\usepackage{charter} 
\usepackage{dsfont}
\usepackage{mathrsfs}
\usepackage{multicol}
\usepackage[all]{xy}

\usepackage[backend=biber,style=alphabetic,sorting=anyt]{biblatex} 
\newtheorem{lemma}{Lemma}[subsection]
\newtheorem{thm}[lemma]{Theorem}

\newtheorem{prop}[lemma]{Proposition}
\newtheorem{remark}[lemma]{Remark}
\newtheorem{example}[lemma]{Example}
\newtheorem{defi}[lemma]{Definition}
\newtheorem{nota}[lemma]{Notation}

\newcommand{\Oc}{\mathcal{O}}
\newcommand{\ord}{\text{ord}}
\newcommand{\Uc}{\mathcal{U}}
\newcommand{\uc}{\mathit{u}}
\newcommand\co{\operatorname{co}}
\newcommand\Ker{\operatorname{Ker}}
\newcommand\Alg{\operatorname{Alg}}
\def\id{\mathop{\rm id} \nolimits}
\def\B{\textbf{B}}
\def\Z{\mathbb{Z}} 
\def\N{\mathbb{N}} 
\def\C{\mathbb{C}}

\def\Q{\mathbb{Q}}

\def\S{\mathcal{S}}

\title[Quantum subgroups of $SL_q(2)$]
{Quantum subgroups of 
$SL_q(2)$ 
at roots of unity of arbitrary order}

\author{G.~A.~Garc\' ia}
\address{
Guangdong-Technion Israel Institute of Technology, No. 241, Daxue Road, 
Shantou, Guangdong Province, China.}
\email{gaston.garcia@gtiit.edu.cn}

\address{
G. A. G. Current Address: CMaLP, Departamento de Matem\'atica, Facultad de Ciencias Exactas,
Universidad Nacional de La Plata. CIC-CONICET. (1900) La Plata, Argentina.}
\email{ggarcia@mate.unlp.edu.ar}

\author{J.~Vallejos}
\address{J.V.: Departamento de Matem\'atica, Universidad Nacional del Sur.
INMABB-CONICET, Bah\'ia Blanca, Argentina.}
\email{josefina.vallejos@uns.edu.ar}

\thanks{2010 Mathematics Subject Classification: 16T05, 16T20, 20G42.\\
\textit{Keywords:} Quantum groups, Hopf algebras, binary polyhedral groups, quantum function algebras.}
\date{\today}

\date{} 

\begin{document}

\begin{abstract}
We complete the classification of quantum subgroups of $SL_q(2)$ with 
$q$ a root of unity of arbitrary order, that is, Hopf algebra quotients of the 
quantum function algebras 
$\mathcal{O}_{q} (SL_2(\C))$.
\end{abstract}

\maketitle

\section{Introduction}

This paper presents a study of the quantum subgroups of $SL_q(2)$ at roots of unity of arbitrary order, which correspond to Hopf algebra quotients of $\Oc_q(SL_2(\C))$. The problem of classifying this family of quantum subgroups was initiated by E. Müller, who describes the finite dimensional quantum subgroups of $SL_q(n)$ in the case when $q$ is a root of unity of odd order. This was later generalized in \cite{AG} for $G_q$ with $G$ a connected, simply connected, simple complex algebraic group and $q$ a primitive $\ell$-th root of unity, with $\ell$ odd and $3 \nmid \ell$ if $G$ is of type $G_2$. The case $q=-1$ has been studied in \cite{Bichon}, focusing on finite dimensional quantum subgroups; and the case where the parameter is an even root of unity appeared in \cite{Chelsea}, where they studied when the finite subgroups of $SL_q(2)$ act on Artin–Schelter regular algebras $R$ of global dimension 2. Quantum subgroups of other quantum groups have also been studied; these can be seen, for example, in \cite{GJ} and references therein.

Using these results and generalizing the techniques developed in \cite{Mu1}, we extend the classification to all quantum sobgroups of $SL_q(2)$ at an even root of unity. Our goal is a comprehensive list of these subgroups, presented explicitly via generators and relations.

A feature of our classification is that the structure of the subgroups differs significantly depending on the specific parameter. We therefore distinguish two main cases in our study:

\begin{itemize}
\item[-] The case $q=-1$.
\item[-]The case where $q$ is a root of unity of even order $\ell=2m$, $m \neq 1$.
\end{itemize}

To find the quantum subgroups of $SL_q(2)$, we use that $\Oc_q(SL_2(\C))$ fits in an exact sequence of Hopf algebras $\mathcal{O}(\Gamma) \hookrightarrow \mathcal{O}_q(SL_2(\C)) \twoheadrightarrow \overline{\mathcal{O}_q(SL_2(\C))}$, with $\Gamma=SL_2(\C)$ or $PSL_2(\C)$. This has been studied by many authors, and the results, that depend on the order of $q$, are shown in Theorem \ref{teoremase}.

In Sections \ref{sectionodd}, \ref{constructionodd} and \ref{determinationodd} we recall the results obtained in \cite{AG} and \cite{Mu1} for the case when the order of $q$ is odd. In Section \ref{section-1}, we compute all the quantum subgroups of $\Oc_{-1}(SL_2(\C))$, which main result is in the following theorem. \\

\noindent{\bf Theorem \ref{teo-1}.}
Let $A$ be a  Hopf algebra quotient of $\Oc_{-1}(SL_2(\C))$. Then either
    \begin{enumerate}
        \item [(i)] $A$ is isomorphic to a quotient  
        $\mathcal{O}_{-1}(SL_2(\C))/(\mathcal{J})$ where $\mathcal{J}=\ker\sigma^t$ and $\sigma: \Gamma \hookrightarrow PSL_2(\C)$ is a monomorphism of algebraic groups, or
        \item [(ii)] $A$ is isomorphic to 
        a function algebra over some dihedral group $D_{2m}$.
        \end{enumerate}

\bigskip

 Finally, in Sections \ref{sectioneven}, \ref{constructioneven} and \ref{determinationeven} we construct and determine all the quotients of $\Oc_q(SL_2(\C))$ when the order of $q$ is $\ell=2m$ with $m \neq 1$. The quotients are in bijection with even subgroup data, whose definition is as follows. \\

\noindent{\bf Definition \ref{def:subgroup-data-qeven}.}
    Let $q$ be a root of unity of even order $\ell\neq 2$. An even subgroup datum is a collection $\mathcal{D} = (I_{+}, I_{-}, N, \Gamma, \sigma, \delta)$ such that:

\begin{itemize}
    \item $I_{+}, I_{-} \in \{\emptyset, \{1\}\}$. 
    
    \item $N$ is a subgroup of $(\mathbb{Z}_{\ell})^{s}$, where $s = 1 - |I_{+} \cup I_{-}|$.
    
    \item $\Gamma$ is an algebraic group.
    
    \item $\sigma \colon \Gamma \to \bar{L}$ is an injective group morphism, where $\bar{L} \subseteq PSL_2(\C)$ is determined by the sets $I_{+}$ and $I_{-}$. 
    
    \item $\delta \colon N \to \widehat{\Gamma}$ is a group morphism.
\end{itemize} 

\bigskip

We introduce the notion of quantum subgroups isomorphism and subgroup data equivalence.
Here $q$ is a root of unity of any order.

\begin{defi} \label{isoquot}
    Let $\rho_1:\Oc_q(SL_2(\C)) \twoheadrightarrow A_1$ and $\rho_2: \Oc_q(SL_2(\C)) \twoheadrightarrow A_2$ be two quotients of $\Oc_q(SL_2(\C))$. We say that $A_1$ and $A_2$ are isomorphic as quantum subgroups if there exists $\varphi:A_1 \to A_2$ isomorphism of Hopf algebras such that $\varphi \circ \rho_1=\rho_2$.
\end{defi}

\begin{defi} \cite[ Definition 2.19]{AG}\label{isodata} Let $\mathcal{D}_1 = (I_{+}^1, I_{-}^1, N_1, \Gamma_1, \sigma_1, \delta_1)$ and $\mathcal{D}_2 = (I_{+}^2, I_{-}^2, N_2, \Gamma_2, \sigma_2, \delta_2)$ be two subgroup data. We say that $\mathcal{D}_1$ and $\mathcal{D}_2$ are equivalent, and denote $\mathcal{D}_1 \cong \mathcal{D}_2$, if 
\begin{itemize}
\item $I_{+}^1 = I_{+}^2$ and $I_{-}^1=I_{-}^2$,
\item there exists an isomorphism of algebraic groups $\sigma: \Gamma_2 \to \Gamma_1$ such that $\sigma_1  \sigma = \sigma_2$,
\item $N_1=N_2$ and $\delta_2=\sigma^t  \delta_1$.
\end{itemize}
\end{defi}

The main result of the section is the following. \\

\noindent{\bf Theorem \ref{teorema}.}
Let $q$ be a root of unity of even order $\ell =2m , m \neq 1$. There exists a bijection between
    \begin{enumerate}
        \item[(i)]  Hopf algebra quotients $\rho:\Oc_q(SL_2(\C)) \to A$ up to isomorphism.
        \item[(ii)] Even subgroup datum up to equivalence.
    \end{enumerate}

\bigskip

We would like to thank J. Bichon, W. Ferrer, C. Negron and S. Lentner for their valuable comments and contributions.

\section{Preliminaries}

In this section we give some definitions and basic results that we will be used later on. We work over the field of complex numbers $\C$.
 
\subsection{Hopf algebras} 
 For the theory of 
 Hopf algebras we refer to \cite{Mo,Ra}.
 The coproduct is denoted by 
 $ \Delta $, the counit by $\varepsilon$ and the antipode by
 $S$. As usual, we use Sweedler's 
 notation for the coproduct, i.e.  
 $\Delta(x) = x_{(1)} \otimes x_{(2)}$.
 We write  $ G(H)$ for the set of group-like elements and
 $H^+ := \ker(\varepsilon)$ for the augmentation ideal.  

\smallbreak
Following \cite{Andruskiewitsch}, a sequence of Hopf algebras maps 
\[
  B  \stackrel{\iota}{\hookrightarrow} A \stackrel{\pi}{\twoheadrightarrow} H   
\]
is called  {\it exact\/}  if  $ \iota $  is injective,
$ \pi $  is surjective, $\pi\circ \iota=\varepsilon$,  
$ \ker(\pi) = AB^+ \, $  and  
$B = {}^{\co \pi} A = \{x \in A : (\pi \otimes \id) \Delta(x)= 1 \otimes x\}$. 
We say that the sequence is \textit{central}, respec. \textit{normal}, 
if the image of $B$ is central, respect. normal, in $A$.

\begin{remark} \label{prop4mu}
Let $B  \stackrel{\iota}{\hookrightarrow} A \stackrel{\pi}{\twoheadrightarrow} H $ 
 be an exact sequence of  Hopf algebras. If $H=\C$ is the trivial Hopf algebra, then $A \cong B$.
\end{remark}

In general, it is not so straightforward to determine when a sequence is exact;
in particular, to set the equality $B = {}^{\co \pi} A$ in case of infinite-dimensional
Hopf algebras.
In this direction, the following result is quite helpful.

 \begin{prop} \label{ff} 
 Let $B \hookrightarrow A$ be an injective morphism of Hopf algebras.
 \begin{enumerate} 
\item[$(i)$] \cite[(3.12)]{ARKHIPOV}
If $B$ is commutative, then $A$ is faithfully flat as a $B$-module.
\item[$(ii)$] \cite[Proposition 3.4.3]{Mo} Assume $A$ is 
faithfully flat as a $B$-module, $AB^+=B^+ A$ and set $H=A/AB^+$ with $\pi: A \twoheadrightarrow A/AB^+$. 
Then
$B=A^{\co \pi}=\ {}^{\co \pi}A$ and $B$ is a normal Hopf subalgebra of $A$.
\end{enumerate}
\end{prop}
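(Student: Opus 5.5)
Both parts are standard results quoted from the literature, so in the paper they may simply be cited. If I were to prove them myself, I would treat (ii) as the core argument and (i) as the deep input.

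For (ii), set $I = AB^+$ and $H = A/I$. First I would check that $I$ is a Hopf ideal.
\begin{itemize}
\item It is a two-sided ideal because $AB^+ = B^+A$, so $I$ is stable under multiplication on both sides.
\item It is a coideal because $\Delta(B^+) \subseteq B^+\otimes B + B\otimes B^+$, which gives $\Delta(I) \subseteq I\otimes A + A\otimes I$.
\item It is $S$-stable because $S(B^+)\subseteq B^+$ and $S$ is an anti-homomorphism, so $S(AB^+)\subseteq S(B^+)S(A) \subseteq B^+A = AB^+$.
\end{itemize}
Hence $\pi$ is a Hopf algebra map. The inclusions $B \subseteq A^{\co \pi}$ and $B \subseteq {}^{\co\pi}A$ are immediate, since $\pi(b) = \varepsilon(b)1$ for $b\in B$.

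The reverse inclusion is the real content of (ii), and I would prove it by faithfully flat descent. Since $A$ is faithfully flat over $B$, the module $B$ is the equalizer of the two maps $A \rightrightarrows A\otimes_B A$ given by $a\mapsto a\otimes 1$ and $a \mapsto 1\otimes a$. So it suffices to show $x\otimes 1 = 1\otimes x$ in $A\otimes_B A$ for every $x\in A^{\co\pi}$.

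To do this I would use the Galois-type map
\[
\beta: A\otimes_B A \to A\otimes H, \qquad a\otimes b\mapsto ab_{(1)}\otimes \pi(b_{(2)}),
\]
and show it is bijective, with candidate inverse $a\otimes \pi(b) \mapsto aS(b_{(1)})\otimes b_{(2)}$. The delicate point is that this inverse is well defined, i.e. vanishes on $a\otimes \pi(bc)$ for $c\in B^+$. Here I would use $c_{(1)}\otimes c_{(2)}\in B\otimes B$ to move $S(c_{(1)})$ across the tensor sign over $B$, and collapse $S(c_{(1)})c_{(2)}$ to $\varepsilon(c)=0$. Once $\beta$ is bijective, $\beta(x\otimes 1) = x\otimes 1$ and $\beta(1\otimes x) = x_{(1)}\otimes\pi(x_{(2)}) = x\otimes 1$ for $x$ right coinvariant, so $x \otimes 1 = 1 \otimes x$ and descent gives $x\in B$.

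The left coinvariants are handled symmetrically. One can use the mirror map $a\otimes b\mapsto \pi(a_{(1)})\otimes a_{(2)}b$, together with flatness of $A$ as a right $B$-module; this holds because $S$ is bijective here, or follows from $AB^+=B^+A$.

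Normality then follows by computing coinvariance of the adjoint action. For $a\in A$ and $b\in B$ we have
\[
(\id\otimes\pi)\Delta\bigl(a_{(1)}bS(a_{(2)})\bigr) = a_{(1)}b_{(1)}S(a_{(4)})\otimes \pi\bigl(a_{(2)}b_{(2)}S(a_{(3)})\bigr).
\]
Writing $b_{(2)} = (b_{(2)}-\varepsilon(b_{(2)})) + \varepsilon(b_{(2)})$, the $B^+$ part lies in $AB^+A = I$ and is killed by $\pi$. The remaining term is $a_{(1)}bS(a_{(2)})\otimes 1$. Hence $\operatorname{ad}(a)(b)\in A^{\co \pi} = B$, so $B$ is normal.

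For (i) I would follow Takeuchi's strategy. First I would reduce to the case where $B$ is finitely generated, using that a commutative Hopf algebra is the directed union of finitely generated Hopf subalgebras, and that faithful flatness passes to such unions. Then I would use that $A$ is a $(B,A)$-Hopf module, in the sense that $A$ is a left $B$-module and a right $A$-comodule in a compatible way. The key step is to show that $B$-modules of this kind are projective over the local rings of $B$, by a comodule-theoretic argument over the commutative ring $B$; this gives flatness, and faithfulness comes from $B\to A$ being split as a $B$-module map locally. I expect this step, establishing flatness for an arbitrary commutative $B$, to be the main obstacle. In practice I would defer it to \cite{ARKHIPOV}, as the statement does.
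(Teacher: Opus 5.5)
The paper does not prove either part; (i) and (ii) are simply quoted from the cited sources. For (ii) you reconstruct the standard argument: Hopf ideal, bijective Galois map $\beta\colon A\otimes_B A\to A\otimes H$, then faithfully flat descent. This route is sound, and it has a real merit. It shows that $AB^+=B^+A$ is what makes $\pi$ a Hopf map and $\beta$ bijective (no flatness is needed for that), while faithful flatness is used only in the descent step. For (i) you defer to the reference, as the paper does. Your sketch there (local projectivity, local splitting) is not an argument and should not be presented as one, but the point is moot. Note that (i) is the deep input the paper actually relies on, since in the even case $\Oc(PSL_2(\C))$ is commutative but not central in $\Oc_q(SL_2(\C))$.

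Three points in your treatment of (ii) need repair.

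First, to see that $a\otimes\pi(x)\mapsto aS(x_{(1)})\otimes_B x_{(2)}$ is well defined, you must write elements of $\ker\pi$ with the $B^+$-factor on the \emph{left}, $x=cy$ with $c\in B^+$. This is where $AB^+\subseteq B^+A$ is used. Then $aS(y_{(1)})S(c_{(1)})\otimes_B c_{(2)}y_{(2)}=aS(y_{(1)})\otimes_B S(c_{(1)})c_{(2)}y_{(2)}=0$. With $x=yc$, as you wrote, $S(c_{(1)})$ sits to the left of $S(y_{(1)})$ and $c_{(2)}$ to the right of $y_{(2)}$, so nothing can be moved across the tensor sign.

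Second, drop the claim that the left-coinvariant case needs flatness on the other side. Your justifications do not hold: $S$ need not be bijective for an arbitrary $A$, and $AB^+=B^+A$ gives no such flatness directly. The claim is also unnecessary. The equality $B=\{x:\ x\otimes 1=1\otimes x \text{ in } A\otimes_B A\}$ holds when $A$ is faithfully flat on either side: tensor $0\to B\to A\to A\otimes_B A$ on the appropriate side, where there is a contracting homotopy. Your mirror map $a\otimes b\mapsto\pi(a_{(1)})\otimes a_{(2)}b$ then feeds into that same statement.

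Third, normality means stability under \emph{both} adjoint actions, and you only checked $a_{(1)}bS(a_{(2)})\in A^{\co\pi}$. Applying the same trick to $b_{(1)}$ gives $(\pi\otimes\id)\Delta\bigl(S(a_{(1)})ba_{(2)}\bigr)=1\otimes S(a_{(1)})ba_{(2)}$. Hence $S(a_{(1)})ba_{(2)}\in{}^{\co\pi}A=B$. This is exactly where the equality with the left coinvariants is needed.
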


%
%
%






%
We now present some results on exact sequences of Hopf algebras 
from \cite{Mu1} and \cite{AG} that will be useful later. These are stated for central
extensions, but actually they hold for normal extensions. The proof follows mutatis mutandis and is left for the reader.


\begin{prop} \cite[(3.4)]{Mu1} \label{prop1mu}
    Given the normal exact sequence of Hopf algebras  
    $B  \stackrel{\iota}{\hookrightarrow} A \stackrel{\pi}{\twoheadrightarrow} H $, 
    $J$ a Hopf ideal of $A$  and $\mathcal{J}=B \cap J$, then the quotient Hopf algebra $A/J$ is contained in an exact sequence of Hopf algebras

\[
B/\mathcal{J}  \hookrightarrow A/J \twoheadrightarrow H/\pi(J) 
\]
    
\end{prop}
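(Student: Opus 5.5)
We have to prove that $B/\mathcal{J} \hookrightarrow A/J \twoheadrightarrow H/\pi(J)$ is exact. Throughout, $B$ is normal in $A$, and we identify $B$ with its image $\iota(B)$.

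\textbf{The maps.} Let $p: A \to A/J$ be the projection. The first map is the one induced by $\iota$; its kernel is $B\cap J=\mathcal{J}$, so it is injective. Note that $\mathcal{J}$ is a Hopf ideal of $B$, because $\Delta(\mathcal{J}) \subseteq (A\otimes J + J\otimes A)\cap (B\otimes B) = B\otimes \mathcal{J} + \mathcal{J}\otimes B$. Write $C := p(B)\cong B/\mathcal{J}$, a Hopf subalgebra of $A/J$. Since $\pi(J)$ is a Hopf ideal of $H$, the map $\bar\pi: A/J \to H/\pi(J)$ induced by $\pi$ is a well-defined surjection of Hopf algebras. The composite $\bar\pi\circ p|_B$ equals $\varepsilon$, because $\pi\circ\iota=\varepsilon$.

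\textbf{Normality of $C$.} The adjoint action on $A/J$ is induced from the adjoint action on $A$, so $C$ is stable under it because $B$ is. Hence $C$ is normal, and in particular $(A/J)C^+ = C^+(A/J)$.

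\textbf{Kernel of $\bar\pi$.} By definition $\ker\bar\pi = p(\pi^{-1}(\pi(J))) = p(\ker\pi + J)$. Using $\ker\pi = AB^+$, this gives $\ker\bar\pi = p(AB^+) = (A/J)\,p(B^+)$. Next, $p(B^+) = C^+$: one inclusion is clear, and conversely, if $p(b)\in C^+$ then $b-\varepsilon(b)1 \in B^+$ and $p(b-\varepsilon(b)1)=p(b)$. Therefore $\ker\bar\pi = (A/J)C^+$.

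\textbf{Coinvariants.} It remains to show $C = {}^{\co\bar\pi}(A/J)$. This is the main obstacle, since it cannot be obtained from the corresponding equality for $A$ by a naive lift. Instead, we identify $H/\pi(J)$ with $(A/J)/(A/J)C^+$, using the previous step, and invoke Proposition \ref{ff}(ii).

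For this we need $A/J$ to be faithfully flat over $C$. In the paper's applications $B$ is commutative, hence so is $C$, and Proposition \ref{ff}(i) gives faithful flatness. Together with $(A/J)C^+ = C^+(A/J)$ from normality, Proposition \ref{ff}(ii) then yields $C = {}^{\co\bar\pi}(A/J)$.

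If commutativity of $B$ is not available, one would instead need faithful flatness of $A/J$ over its normal Hopf subalgebra $C$ by other means, for instance via Takeuchi's results when $A/J$ is commutative or cosemisimple. This is exactly the point where the "mutatis mutandis" extension from central to normal sequences requires care.
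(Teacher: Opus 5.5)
Your verification of the formal part of exactness is correct. You show that $B/\mathcal{J}\to A/J$ is injective, that the composite is $\varepsilon$, that $C=p(B)$ is normal, and that $\ker\bar\pi=p(\ker\pi+J)=p(AB^+)=(A/J)C^+$. For the coinvariants you reduce to Proposition~\ref{ff}. When $B$ is commutative, so is $C$, and part (i) makes $A/J$ faithfully flat over $C$. Part (ii), together with $(A/J)C^+=C^+(A/J)$, then gives $C={}^{\co\bar\pi}(A/J)$.

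This is a different route from the paper, which gives no argument of its own. It cites M\"uller's result for central sequences and asserts that the normal case follows ``mutatis mutandis''. Your argument makes that claim precise: the only non-formal input is faithful flatness. For commutative $B$, and in particular for central $B$, this is supplied by Proposition~\ref{ff}(i). That covers every place the paper uses the proposition, where $B$ is $\mathcal{O}(SL_2(\C))$, $\mathcal{O}(PSL_2(\C))$ or $\mathcal{O}(\Gamma)$.

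For the statement as literally written, with arbitrary normal $B$, the coinvariant step remains unproved, as you say. It cannot be recovered by transporting flatness from $A$ over $B$. Indeed $A\otimes_B B/\mathcal{J}\cong A/A\mathcal{J}$, and $A/J$ is a proper quotient of this unless $J=A\mathcal{J}$. So flatness of $A$ over $B$ does not by itself give flatness of $A/J$ over $p(B)$. Without it one only has the formal inclusion $C\subseteq{}^{\co\bar\pi}(A/J)$.

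The honest formulation adds ``$B$ commutative'', or ``$A/J$ faithfully flat over $p(B)$'', to the hypotheses. This gap is in the paper's ``mutatis mutandis'' as much as in your proposal. Your proof is complete for the version the paper actually uses.
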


\begin{prop} \cite[(2.10)]{AG} \label{prop2mu}
    Let $A$ and $C$ be Hopf algebras, let $B$ be a normal Hopf subalgebra of $A$ such that $A$ is left or right faithfully flat over $B$ 
    and let $r:B \rightarrow C$ be a surjective Hopf algebra map. Then $H=A/AB^{+}$ is a Hopf algebra and $A$ fits into the exact sequence 
    $  B  \stackrel{\iota}{\hookrightarrow} A \stackrel{\pi}{\twoheadrightarrow} H$. 
    If we set $\mathcal{J}=\ker r \subseteq B$, then $(\mathcal{J})=A\mathcal{J}$ is a Hopf ideal of $A$ and 
    $A/(\mathcal{J})$ is the pushout given by the following diagram.

    $$\xymatrix {
    B \ar@{^{(}->}[r]^{\iota} \ar@{->>}[d]^{r} 
    & A \ar@{->>}[d]^{\rho} \\
    C \ar@{^{(}->}[r]^{\bar{\iota}}  & A/(\mathcal{J}) 
    }$$

    Moreover, $C$ can be identified with a normal 
    Hopf subalgebra of $A/(\mathcal{J})$ and $A/(\mathcal{J})$ fits into the exact sequence

    $$  C  \stackrel{\bar{\iota}}{\hookrightarrow} A/(\mathcal{J}) \stackrel{\bar{\pi}}{\twoheadrightarrow} H $$

\end{prop}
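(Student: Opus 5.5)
The plan is to realize $A/(\mathcal{J})$ concretely as a pushout and then push the exact sequence down along $r$. Three facts about the ideal $(\mathcal{J})=A\mathcal{J}$ carry most of the weight.

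First, I show that $A\mathcal{J}$ is a two-sided Hopf ideal equal to $\mathcal{J}A$. For that I want normality in the strong form $AB^+=B^+A$, and more precisely that the adjoint action $a\cdot b = a_{(1)} b S(a_{(2)})$ preserves $\mathcal{J}$. The catch is that $\mathcal{J}$ is only a Hopf ideal of $B$, so it need not be ad-stable. In the intended applications $B$ is central (for instance $B=\mathcal{O}(\Gamma)$), where this is automatic. In the general normal case I would assume $\mathcal{J}$ is ad-stable, or note that the identity $ab = (a_{(1)}\cdot b)\,a_{(2)}$ gives $A\mathcal{J}\subseteq \mathcal{J}A$ as soon as $\mathcal{J}$ is ad-invariant. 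Granting this, $\Delta(A\mathcal{J})\subseteq A\mathcal{J}\otimes A + A\otimes A\mathcal{J}$, $\varepsilon(A\mathcal{J})=0$ and $S(A\mathcal{J})=S(\mathcal{J})S(A)\subseteq \mathcal{J}A=A\mathcal{J}$, so $(\mathcal{J})=A\mathcal{J}$ is a Hopf ideal.

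Second, the pushout property. Any Hopf map $f:A\to D$ and $g:C\to D$ with $f\iota=g r$ kills $\mathcal{J}$, hence kills $A\mathcal{J}$ and factors through $\rho$. The induced map $\bar\iota: C=B/\mathcal{J}\to A/A\mathcal{J}$ is well defined because $\mathcal{J}\subseteq A\mathcal{J}\cap B$.

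Third, and this is the main obstacle, injectivity of $\bar\iota$, i.e. $A\mathcal{J}\cap B=\mathcal{J}$. Here I use faithful flatness. Since $A$ is faithfully flat over $B$, tensoring the exact sequence $0\to \mathcal{J}\to B\to B/\mathcal{J}\to 0$ with $A$ shows that $A\otimes_B (B/\mathcal{J})\cong A/A\mathcal{J}$. Moreover, the map $B/\mathcal{J}\to A\otimes_B B/\mathcal{J}$ is injective, by the standard fact that $M\to A\otimes_B M$ is injective for faithfully flat $A$. Hence $B\cap A\mathcal{J}=\mathcal{J}$. The same argument identifies $\bar\iota(C)$ with a normal Hopf subalgebra, since ad-stability passes to quotients.

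Finally, exactness of $C\hookrightarrow A/(\mathcal{J})\twoheadrightarrow H$. The map $\pi$ kills $B^+\supseteq\mathcal{J}$, so it factors as $\bar\pi$, and $\bar\pi\bar\iota=\varepsilon$. The kernel of $\bar\pi$ is $\rho(AB^+)=\bar A\,\bar\iota(C^+)$, because $r(B^+)=C^+$. For the coinvariants I would apply Proposition \ref{ff}(ii) to $C\subseteq \bar A$. This needs faithful flatness of $\bar A=A\otimes_B C$ over $C$, which follows by base change from the faithful flatness of $A$ over $B$. We then conclude $C={}^{\co\bar\pi}\bar A$, using Remark \ref{prop4mu}-type identifications to check that $\bar A/\bar A C^+\cong A/AB^+=H$.
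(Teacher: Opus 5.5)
\textbf{Verdict.} Your argument is correct under the extra hypothesis you add, and it follows the standard route. That route is the argument of the cited source \cite[(2.10)]{AG}, where $B$ is assumed \emph{central}. The paper gives no proof of its own; it only asserts that the normal case follows mutatis mutandis.

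\textbf{Your steps, granting $A\mathcal{J}=\mathcal{J}A$.} Everything else is sound: the universal property of the pushout, injectivity of $C=B/\mathcal{J}\to A/A\mathcal{J}$ via $M\hookrightarrow A\otimes_B M$ for faithfully flat $A$, faithful flatness of $A\otimes_B C$ over $C$ by base change, and exactness via Proposition \ref{ff}(ii). Some minor points:
\begin{itemize}
\item To get $A\mathcal{J}=\mathcal{J}A$ you need stability under both adjoint actions. The identity $ab=(a_{(1)}\cdot b)a_{(2)}$ only gives $A\mathcal{J}\subseteq\mathcal{J}A$; the reverse inclusion uses $ba=a_{(1)}\bigl(S(a_{(2)})\,b\,a_{(3)}\bigr)$.
\item The isomorphism $A\otimes_B B/\mathcal{J}\cong A/A\mathcal{J}$ is just right exactness; no flatness is needed there.
\item The first assertion, exactness of $B\hookrightarrow A\twoheadrightarrow H$, is Proposition \ref{ff}(ii) together with $AB^+=B^+A$, which you did not spell out.
\item Remark \ref{prop4mu} plays no role. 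The identification $\bar A/\bar A C^+\cong A/(AB^++A\mathcal{J})=A/AB^+$ is immediate from $\mathcal{J}\subseteq B^+$.
\end{itemize}

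\textbf{The statement is false for merely normal $B$.} The caveat you raise is not a removable technicality, so no proof of the statement as printed can exist. Take the following data.
\begin{itemize}
\item $A=\C D_8$, with $D_8=\langle x,y\mid x^4=y^2=1,\ yxy^{-1}=x^{-1}\rangle$.
\item $B=\C N$, with $N=\{1,x^2,y,yx^2\}$. This is a normal Hopf subalgebra, and $A$ is free over it.
\item $M=\{1,y\}$ and $r\colon \C N\to C=\C[N/M]$.
\end{itemize}
Then $\mathcal{J}=B(y-1)$, and $A\mathcal{J}=A(y-1)$ is the kernel of $A\to\C[D_8/M]$. It is not a right ideal: $(y-1)x=x(yx^2-1)$ maps to $xyx^2M-xM\neq 0$, because $yx^2\notin M$.

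The two-sided ideal generated by $\mathcal{J}$ contains $x(y-1)x^{-1}=yx^2-1$. It therefore contains all of $B^+$, since $x^2-1=(y-1)(yx^2-1)+(y-1)+(yx^2-1)$. Hence $A/(\mathcal{J})\cong\C[D_8/N]$, and the nontrivial grouplike $r(x^2)$ of $C$ is sent to $1$, so $C$ does not embed.

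In fact, by your third step, when $A$ is faithfully flat over $B$ the left ideal $A\mathcal{J}$ is two-sided if and only if $\mathcal{J}$ is ad-stable. Indeed, if $A\mathcal{J}$ is two-sided, then $a_{(1)}jS(a_{(2)})\in A\mathcal{J}\cap B=\mathcal{J}$. So ad-stability is exactly the missing hypothesis, and the paper's mutatis mutandis claim does not hold.

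\textbf{The applications are not central.} Your justification that ``in the intended applications $B$ is central'' is incorrect, and this matters. The paper uses the proposition with $B=\Oc(PSL_2(\C))$ inside $\Oc_q(SL_2(\C))$ for $q=-1$ (Theorem \ref{teo-1}) and for $\mathrm{ord}(q)=2m$ (Lemma \ref{lemaconst}). There $B$ is normal but not central:
\begin{itemize}
\item for $q=-1$, $a(ab)=-(ab)a$;
\item for $\mathrm{ord}(q)=2m$, $a(a^mb^m)=-(a^mb^m)a$, since $q^m=-1$.
\end{itemize}
For $q=-1$ one checks that $x\,y=\theta(y)\,x$ for every generator $x$ and every $y\in B$. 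Here $\theta$ is the automorphism of $\Oc(PSL_2(\C))$ induced by conjugation by the class of $\mathrm{diag}(i,-i)$. Combined with $da+cb=1$, it follows that $A\ker\sigma^t$ is two-sided if and only if $\sigma(\Gamma)$ is normalized by that element. So outside the odd-order case, where $\Oc(SL_2(\C))$ is central, your extra hypothesis is a genuine restriction on the data, not something automatic.
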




    

\subsection{$SL_{2}(\C)$, $\mathfrak{sl}_{2}(\C)$,
$U(\mathfrak{sl}_{2})$ and $\Oc(SL_2(\C))$}
Throughout the paper we will work with 
the groups $SL_{2}(\C)$, $PSL_{2}(\C)$, and the 
Lie algebra $\mathfrak{sl}_{2}(\C)$. 
We recall shortly the definition
to set notation. 

The group $SL_2(\C)$ is the algebraic group 
defined as the subgroup of 
$GL_2(\C)$ of matrices of determinant $1$. Its 
Lie algebra $\mathfrak{sl}_{2}(\C)$ consists of 
$2\times 2$ matrices with entries in $\C$ and zero trace with 
bracket given by the commutator \([X,Y]:=XY-YX\). 
Equivalently, it can be presented as the $\C$-vector space
spanned by the elements $h,e,f$ satisfying the bracket relations:
\begin{equation*}
    [h,e]=2e,\quad [e,f]=h,\quad [h,f]=-2f.
\end{equation*}
The isomorphism between the two presentations is given by the 
standard representation under the identification
    \[e=\left(\begin{array}{cc}
    0 & 1 \\
    0 & 0
    \end{array}\right),\qquad 
    h=\left(\begin{array}{cc}
    1 & 0 \\
    0 & -1
    \end{array}\right),\quad 
    f=\left(\begin{array}{cc}
    0 & 0 \\
    1 & 0
    \end{array}\right).\]
The universal enveloping algebra $U(\mathfrak{sl}_{2}(\C))$ 
is the unital associative $\C$-algebra generated by $e,h,f$ modulo the relations:
$$
        he - eh = 2e, \ \ \
        hf - fh = -2f, \ \ \
        ef - fe = h.
$$
%

The coordinate algebra $\Oc(SL_2(\C))$ is defined as the commutative algebra generated 
by the elements $X_{11}, X_{12}, X_{21}, X_{22}$ satisfying the relation $X_{11}X_{22}-X_{12}X_{21}=1$. That is,

\[
\Oc(SL_2(\C)) = \C  [X_{11}, X_{12}, X_{21}, X_{22} \ |\ X_{11}X_{22}-X_{12}X_{21}=1 ]. 
\] 
\subsection{On $PSL_2(\C)$ and polyhedral groups}

Recall that $PSL_2(\C)$ is the algebraic group  
defined as the quotient of $SL_2(\C)$ by the finite subgroup 
$\{\pm I\}$. Its Lie algebra is also $\mathfrak{sl}_{2}(\C)$.
Also, as an algebraic group it is 
isomorphic to $SO_3(\C)=\{A \in SL_3(\C) | \,A^{t}A=Id\}$.

The following lemma gives 
the coordinate algebra of $PSL_2(\C)$.
Although it is a well-know result, see \cite[Exercise (I.7.M)]{Ken}, we include it 
here for completeness.

\begin{lemma}
The coordinate algebra of $PSL_2(\C)$ is the subalgebra of $\Oc(SL_2(\C))$ generated by the elements
$X_{ij}X_{kl}\,$ for all $ i,j,k,l \in \{1,2\}$.
\end{lemma}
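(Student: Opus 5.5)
Since $PSL_2(\C)=SL_2(\C)/\{\pm I\}$ is the quotient of an affine algebraic group by a finite (hence reductive) subgroup, I will use the standard fact that the coordinate algebra of the quotient is the ring of invariants:
\[
\Oc(PSL_2(\C))=\Oc(SL_2(\C))^{\{\pm I\}} .
\]
Here the invariants are taken under the translation action of $\{\pm I\}$ (left and right translation agree, because $-I$ is central). So the lemma reduces to computing this invariant subalgebra.

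The first step is to describe the action. Translation by $-I$ sends each coordinate function $X_{ij}$ to $-X_{ij}$. It is therefore the algebra automorphism $\theta$ of $\Oc(SL_2(\C))$ with $\theta(X_{ij})=-X_{ij}$. This is well defined because the relation $X_{11}X_{22}-X_{12}X_{21}=1$ is homogeneous of degree $2$, so $\theta$ preserves it.

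Next I grade $\Oc(SL_2(\C))$ by $\Z/2\Z$, giving each $X_{ij}$ degree $1$. The grading is well defined for the same reason: the defining relation is even. Then $\Oc(SL_2(\C))=\Oc_{\bar 0}\oplus\Oc_{\bar 1}$, and $\theta$ acts by $+1$ on $\Oc_{\bar 0}$ and by $-1$ on $\Oc_{\bar 1}$. Hence the invariants are exactly $\Oc_{\bar 0}$.

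It remains to show that $\Oc_{\bar 0}$ is generated by the products $X_{ij}X_{kl}$. The algebra $\Oc(SL_2(\C))$ is spanned by monomials in the $X_{ij}$. This spanning set is compatible with the grading, since the quotient map from the polynomial ring is graded. So $\Oc_{\bar 0}$ is spanned by the images of monomials of even total degree. Every such monomial is a product of pairs $X_{ij}X_{kl}$ (the empty product giving $1$), so these pairs generate $\Oc_{\bar 0}$. Conversely, each $X_{ij}X_{kl}$ is invariant.

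The only real obstacle is justifying the identification of $\Oc(PSL_2(\C))$ with the invariant ring, i.e.\ that the quotient of an affine group by a finite normal subgroup is the spectrum of the invariants. I will cite this as the standard result for quotients by finite groups (or by reductive groups). Alternatively, it can be checked directly: the isomorphism $PSL_2(\C)\cong SO_3(\C)$ via the adjoint action on $\mathfrak{sl}_2(\C)$ has matrix entries that are quadratic in the $X_{ij}$, and comparing the two algebras then yields the same description.
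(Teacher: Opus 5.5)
Your proof is correct and follows essentially the same route as the paper. Both arguments identify $\Oc(PSL_2(\C))$ with the functions on $SL_2(\C)$ invariant under $b\mapsto -b$: the paper phrases this as the coinvariants of $\iota^*$ in the exact sequence $\Oc(PSL_2(\C))\hookrightarrow\Oc(SL_2(\C))\twoheadrightarrow\Oc(\{\pm I\})$, you as the invariant ring. Both then observe that these functions are exactly the even-degree elements. Your $\Z/2\Z$-grading makes that parity step more rigorous than the paper's ``$(-1)^k=1$, so $k$ is even'', because it handles non-homogeneous elements and the fact that polynomial representatives are only determined modulo $X_{11}X_{22}-X_{12}X_{21}-1$.
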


\begin{proof} The exact sequence of algebraic groups
$$
\xymatrix{
\{\pm I\} 
\ar@{^{(}->}[r]^{\iota}& SL_2(\C)\ar@{->>}[r]^{\pi}&
PSL_2(\C)} 
$$
induces the exact sequence of Hopf algebras
$$
\xymatrix{
\mathcal{O}(PSL_2(\C)) \ar@{^{(}->}[r]^{\pi^*}&
\mathcal{O}(SL_2(\C)) \ar@{->>}[r]^{\iota^*}&
\mathcal{O}(\{\pm Id \}).
} 
$$
Then \begin{small}$$\mathcal{O}(PSL_2(\C)) = \mathcal{O}(SL_2(\C)/\{\pm I\})
=\mathcal{O}(SL_2(\C)) ^{\co \iota^*} 	
= \{ \alpha \in \mathcal{O}(SL_2(\C))\, |\, (\iota^{*} \otimes id) 
\Delta(\alpha)= 1 \otimes \alpha \}.$$\end{small}
In particular, for $a \in \{\pm I \}, b \in SL_2(\C)$ and $\alpha \in \Oc(SL_2(\C))$, we have 
\[
(\iota^{*} \otimes id) \Delta(\alpha)(a \otimes b)= \Delta (\alpha) (\iota(a) \otimes b)=\alpha (\iota(a)b)
\]
and $(1 \otimes \alpha)(a \otimes b)=1(a)\alpha(b)=\alpha(b)$. 

So, $\mathcal{O}(PSL_2(\C))= \{\alpha \in \mathcal{O}(SL_2(\C)) \ | \  \alpha(\iota(a)b)=\alpha(b),\ \ \forall a \in \{\pm Id \}, b \in SL_2(\C)  \}.$ 
 Take $a=-Id$ and consider
$\alpha = \displaystyle\sum_{l} \alpha_l \ X_{i_1j_1} X_{i_2j_2} \ldots X_{i_kj_k} \in \mathcal{O}(SL_2(\C))$. 
Then, for $b \in SL_2(\C)$ we have that 
$\alpha(b) = \displaystyle\sum_{l} \alpha_l \ b_{i_1j_1} b_{i_2j_2} \ldots b_{i_kj_k}$ and 
\[
\alpha(-b) = \displaystyle\sum_{l} \alpha_l \ (-b_{i_1j_1}) (-b_{i_2j_2}) \ldots (-b_{i_kj_k}) = \displaystyle\sum_{l} \alpha_l \ (-1)^k \ b_{i_1j_1} b_{i_2j_2} \ldots b_{i_kj_k}
\]
As $a=-Id$, then $\alpha(b)=\alpha(-b)$ implies $(-1)^k = 1$ so $k$ is even. \\

In conclusion, $\mathcal{O}(PSL_2(\C)) = \C[X_{ij}X_{kl} \ |\ i,j,k,l \in \{1,2\}]$ as we wanted to prove. 

\end{proof}

In Sections $\ref{sec2.4}, \ref{sec2.5}$ and $\ref{sec2.6}$ we will define the quantum groups 
$\Uc_q(\mathfrak{sl}_n)$, $\uc_q(\mathfrak{sl}_n)$ and  $\Oc_q(SL_n(\C))$ for $q$ indeterminate, that in Section $3$ will be replaced by roots of unity. 
We will follow the definitions from \cite{Lu2}, \cite{Takeuchi} and \cite{Mu1}.

\subsection{Quantum enveloping algebras  $\Uc_q(\mathfrak{sl}_n)$, for $q$ indeterminate} \label{sec2.4}

Let $K=\mathbb{Q}(q)$ with $q$ an indeterminate. Let $U_q(\mathfrak{sl}_n)_K$ be the $K$-algebra defined by 
generators $k_i, k_i^{-1}$ $(1 \leq i \leq n)$, $e_j, f_j$ $(1 \leq j < n)$ and relations

\begin{align*}
    k_i k_i^{-1} &= k_i^{-1} k_i = 1, \quad k_i k_j = k_j k_i,  \\
    k_i e_j k_i^{-1} &= q^{2\delta_{ij}-\delta_{i+1,j}-\delta_{i,j+1}} e_j, \quad k_i f_j k_i^{-1} = q^{-2\delta_{ij}+\delta_{i+1,j}+\delta_{i,j+1}} f_j,  \\
    e_j f_l - f_l e_j &= \delta_{j,l} \frac{k_j - k_j^{-1} }{q^{-1} - q},  \\
    e_j e_l &= e_l e_j, \quad f_j f_l = f_l f_j \quad \text{if } |j - l| > 1,  \\
    e_j^2 e_{l} - &(q + q^{-1}) e_j e_{l} e_j + e_{l} e_j^2 = 0, \\
    f_j^2 f_{l} - &(q + q^{-1}) f_j f_{l} f_j + f_{l} f_j^2 = 0 \quad \text{if } |j - l| = 1. 
\end{align*}

with $1 \leq i, k \leq n, \ 1 \leq j, l < n$. There is a Hopf algebra structure on $U_q(\mathfrak{sl}_n)_K$ where

\begin{align*}
    \Delta(k_i) = k_i \otimes k_i, \quad \quad  
    \Delta(e_j) = 1 \otimes e_j &+ e_j \otimes k_j , \quad \quad  
    \Delta(f_j) = k_j^{-1} \otimes f_j + f_j \otimes 1,\\
    \varepsilon(k_i) = 1, \quad &\varepsilon(e_j) = \varepsilon(f_j) = 0, \\
    S(k_i) = k_i^{-1}, \quad S(e_j) &= -e_j k_j^{-1} , \quad S(f_j) = -k_j f_j.
\end{align*} 

\

The Hopf algebra $U_q(\mathfrak{sl}_n)_K$ has the following Lusztig form $\Uc_q(\mathfrak{sl}_n)$, 
see \cite[(3.2)]{Takeuchi}.

\begin{defi} \label{defUq1}
    Let $R=\Z[q, q^{-1}]$. We define $\mathcal{U}_q(\mathfrak{sl}_n)_R$ as the $R$-subalgebra of $U_q(\mathfrak{sl}_n)_K$ generated by $k_i$, 
    $\left[ \begin{array}{c} k_i \ ; 0 \\ t \end{array} \right]$, 
    $e_j^{(N)}$, $f_j^{(N)}$ $(1 \leq i \leq n, \ 1 \leq j < n, \ 0 \leq N,t)$, where

\begin{center}
\[
\left[ \begin{array}{c} k_i \ ; 0 \\ t \end{array} \right] =
\prod_{s=1}^{t} \frac{q^{-s+1}k_i - q^{s-1}k_i^{-1}}{q^{s} - q^{-s}}
\]
\end{center}

\begin{align*}
    e_j^{(N)} &= \frac{e_j^N}{[N]!}, \quad f_j^{(N)} = \frac{f_j^N}{[N]!}, \quad \text{with} \quad [N]! = \prod_{s=1}^{N} \frac{q^{s} - q^{-s}}{q - q^{-1}}. 
    \end{align*}
\end{defi}

$\mathcal{U}_q(\mathfrak{sl}_n)_R$  is a Hopf algebra integral form of  $U_q(\mathfrak{sl}_n)_K$ with

\[ \Delta \left[ \begin{array}{c} k_i \ ; 0\\ t \end{array} \right] = \sum_{\nu=0}^{t} k_i^{\nu-t} 
\begin{bmatrix} k_i \ ; 0 \\ \nu \end{bmatrix} \otimes k_i^{\nu} \begin{bmatrix} k_i \ ; 0 \\ t - \nu \end{bmatrix}, \] 
\[
\Delta(e_j^{(N)}) = \sum_{\nu=0}^{N} q^{\nu(N-\nu)} e_j^{(N-\nu)}k_j ^{\nu} \otimes  e_j^{(\nu)},
\]

\[
\Delta(f_j^{(N)}) = \sum_{\nu=0}^{N} q^{-\nu(N-\nu)} f_j^{(\nu)} \otimes k_i^{-\nu} f_j^{(N-\nu)}.
\]

\subsection{Quantum groups  $\Oc_q(SL_n(\C))$, for $q$ indeterminate} \label{sec2.5}

\begin{defi}
    Let $K=\Q(q)$. The standard quantization of the algebra of functions on $n \times n$ matrices $\Oc_q(M_n)_K$ is the $K$-algebra generated by the elements $x_{ij}$ for $1\leq i,j\leq n$, subject to the $q$-matrix relations
    \begin{align*}
    x_{is}x_{js} &= q\,  x_{js}x_{is},  & x_{si}x_{sj}& = q\, x_{sj}x_{si} & \text{if } i<j, \\
    x_{it}x_{js}& =x_{js}x_{it}, & x_{is}x_{jt}-x_{jt}x_{is} & =(q-q^{-1})x_{it}x_{js}, & \text{if } i<j, s<t. 
    \end{align*}

\end{defi}

\begin{defi} \cite{Ken}
    Let $K=\Q(q)$. We define $\Oc_q(SL_n)_K$ as the $K$-algebra given by the quotient $\Oc_q(SL_n)_K:=\Oc_q(M_n)_K/(det_q-1)$, where $(det_q-1)$ is the two sided ideal of $\Oc_q(M_n)_K$ generated by the element $det_q-1 = \displaystyle\sum_{\pi \in S_n} (-q)^{l(\pi)}x_{1\pi(1)}x_{2\pi(2)} \ldots x_{n\pi(n)} -1$.

A Hopf algebra structure is given by
\[ \Delta (x_{ij})  = \sum_{s=1}^{n} x_{is} \otimes x_{sj}, \] 
\[  \S (x_{ij}) = (-q)^{(i-j)} A_{ji}  \]
\[ \varepsilon (x_{ij}) = \delta_{ij} \]

where $A_{ji}$ denotes the $j,i$-th quantum minor of $\Oc_q(M_n)_K$. For more details see \cite{Parshall}.
\end{defi}

\begin{example} \label{Oq(sl2)}
The quantum special linear group  
$\Oc_q(SL_2)_K$ is the unitary associative $K$-algebra 
generated by the elements $a,b,c,d$
satisfying the following relations 
\begin{align*}
ab &= q\,  ba,  & bc& = cb, &  ac &= q  ca,  & ad-da = (q-q^{-1})  bc, \\
bd& =q\, db, & cd &=q dc, &  ad-q bc& =1. & 
\end{align*}

The coalgebra structure and antipode are given by 
\begin{align*}
\triangle(a) &= a \otimes a + b \otimes c,  & \varepsilon(a)& = 1, &  \S(a) &= d,  \\
\triangle(b)& =a \otimes b + b \otimes d, & \varepsilon(b)& = 0, &  \S(b) &= -qb \\
\triangle(c) &= c \otimes a + d \otimes c,  & \varepsilon(c)& = 0, &  \S(c) &= -q^{-1}c, \\
\triangle(d) &= c \otimes b + d \otimes d,  & \varepsilon(d)& = 1, &  \S(d) &= a &
\end{align*}
\end{example}

\begin{remark} 
    \textbf{(Specialization at roots of unity).} 
    In \cite[(III.7.1.)]{Ken}, Brown and Goodearl studied the specialization at roots of unity of the algebras $\Uc_q(\mathfrak{g})$ and $\Oc_q(G)$ for $G$ a connected, simply connected complex semisimple group with Lie algebra $\mathfrak{g}$. It goes as follows:
    
    Let $R=\Z[q,q^{-1}]$ and $\epsilon \neq \pm1$ be a complex primitive $\ell^{th}$ root of unity. Let $\chi_{\ell}(q) \in R$ be the $\ell^{th}$ cyclotomic polynomial, so that $R/\chi_{\ell}(q)R\cong \Q(\epsilon)$ and $\chi_{\ell}(q)\Oc_q(G)$ is a Hopf ideal of $\Oc_q(G)$. The algebra $\Oc_q(G)/\chi_{\ell}(q)\Oc_q(G)$ is denoted $\Oc_{\epsilon}(G)_{\Q(\epsilon)}$ and is called the quantized coordinate algebra of $G$ over $\Q(\epsilon)$ at the root of unity $\epsilon$. In the same way, we can form the $\Q(\epsilon)$- Hopf algebra $\Uc_{\epsilon}(\mathfrak{g}):=\Uc_q(\mathfrak{g})/\chi_{\ell}(q)\Uc_q(\mathfrak{g})$ and the Hopf pairing of $(2.6.2)$ induces a Hopf pairing between $\Oc_{\epsilon}(G)_{\Q(\epsilon)}$ and $\Uc_{\epsilon}(\mathfrak{g})$. If $\Bbbk$ is any field containing $\Q(\varepsilon)$ we can obtain a $\Bbbk$-form of $\Oc_{\epsilon}(G)$, namely $\Oc_{\epsilon}(G)_{\Bbbk}:= \Oc_{\epsilon}(G) \otimes_{\Q(\epsilon)}\Bbbk$. When $\Bbbk=\C$ we simply write $\Oc_{\epsilon}(G)$. 
\end{remark}

\begin{nota}
{\it    From now on we set $q$ to be an $\ell-th$ root of 
unity with $q^2\neq 1$, and $\Bbbk=\C$. Let $m=\ord(q^2)$, so if $\ell$ is odd $\ell=m$ and if $\ell$ is even $\ell=2m$}. 
\end{nota}

\subsection{Small quantum groups} \, \label{sec2.6}

 In this section we  introduce the algebras $\uc_{2,q}(\mathfrak{sl}_n)$ and $\uc_q(\mathfrak{sl}_n)$ 
 defined by Takeuchi in \cite{Takeuchi} who denoted them as  
 $U_q(\mathfrak{sl}_n)^{[\ell]}$ and $U_q(\mathfrak{sl}_n)^{[[\ell]]}$, respectively. 
 We will follow the notation given in \cite{Mu1} and  \cite{Chelsea}. 
 Then, we characterize the duals of the Frobenius–Lusztig kernels 
 $u_{2,q}(\mathfrak{sl_n})^{\circ}$ and $u_q(\mathfrak{sl}_n)^{\circ}$. 

 First, we begin by defining the algebras $u_{2,q}(\mathfrak{sl}_n)$ for arbitrary $\ell$.

\begin{defi} \label{defu2q}
           We define $u_{2,q}(\mathfrak{sl}_n)$ as the Hopf subalgebra of $\Uc_q(\mathfrak{sl}_n)$ 
        generated by $k_i$, $k_i^{-1}$, $e_i$ and $f_i$ $(1 \leq i < n)$. 
        Here, $k_i$ is an element of order $2\ell$ and $e_j,f_j$ are elements of order $\ell$.
        The dimension of this Hopf algebra is $2^{n-1}\ell^{n^2-1}$. 
\end{defi}

We will use the notation $u_q(\mathfrak{sl}_n)$ for the case when the order of $q$ is odd and the definition is as follows.

\begin{defi} \label{defuq}
 We define $$u_q(\mathfrak{sl}_n):=u_{2,q}(\mathfrak{sl}_n)/(k_i^{\ell}-1),$$ which is a  Hopf algebra of dimension $\ell^{n^2-1}$.
 \end{defi}

Now we define two quotients of  $\Oc_q(GL_n)$, one for the case when the order of $q$ is arbitrary and the other for the case when ord($q$) is odd, and it turns out that both quotients are isomorphic to the duals of the algebras defined in Definitions \ref{defu2q} and \ref{defuq}.

\begin{defi} \label{overline} Let $q$ be a root of unity with arbitrary order $\ell$ and $m=\ord(q^2)$.
We define $\overline{\Oc}_{q}(SL_{n}(\C))$ as the quotient of $\Oc_{q}(M_{n}(\C))$ by the ideal $(det_q-1, x_{ii}^{2m}-1, x_{ij}^{m} (i \neq j))$. The dimension is $2^{n-1}m^{n^2-1}$.

\end{defi}

\begin{defi} \label{widehat} Let $q$ be a root of unity with odd order $\ell>2$.
We define $\widehat{\Oc}_{q}(SL_{n}(\C))$ as the quotient of $\Oc_{q}(M_{n}(\C))$ by the ideal $(det_q-1, x_{ii}^{\ell}-1, x_{ij}^{\ell} (i \neq j))$. The dimension is $\ell^{n^2-1}$.
\end{defi}

From now on we will work in the case $n=2$. We start giving the definition of $\Oc_{-1}(SL_2(\C))$.

\begin{defi} \cite[(5.2)]{Bichon}
    The algebra $\Oc_{-1}(SL_2(\C))$ is generated by $a,b,c,d$ with relations 
    \begin{align*}
    ab &= -  ba,  & bc& = cb, &  ac &= - ca,  & ad = da, \\
    bd& =- db, & cd &=- dc, &  det _{-1} &= ad+ bc =1. 
    \end{align*}

$\Oc_{-1}(SL_2(\C))$ is a Hopf algebra with the coproduct, counit and antipode defined in Example \ref{Oq(sl2)}. 
\end{defi}

The Hopf algebra $\Oc_{q}(SL_{2}(\C))$ has a central or normal commutative 
Hopf subalgebra depending on the order of $q$, as we can see in the next lemma.

\begin{lemma}
\begin{itemize}
    \item[(i)] Assume that $q$ is a root of unity of odd order $\ell>2$. Consider the subalgebra $L$ of $\Oc_q(SL_2(\C))$ generated by the elements $a^{\ell}, b^{\ell}, c^{\ell}$ and  $d^{\ell}$. Then, $L$ is a  central Hopf subalgebra of $\Oc_q(SL_2(\C))$ isomorphic to $\Oc(SL_2(\C))$ as Hopf algebras.
    \item[(ii)] For $q=-1$, $B= \C\langle a^2, b^2, c^2, d^2, ab, ac, bc, bd, cd \rangle$ is a commutative normal Hopf subalgebra of $\Oc_{-1}(SL_2(\C))$ isomorphic to $\mathcal{O}(PSL_2)$ as Hopf algebras.
    \item[(iii)] Assume that $q$ is a root of unity with $q^2\neq 1$ and $2m=\ell=$ ord$(q)$ even. Consider the subalgebra $N$ of $\Oc_q(SL_2(\C))$ generated by the monomials $x^my^m$ where $x,y \in \{a,b,c,d\}$. Then, $N$ is a commutative normal Hopf subalgebra of $\Oc_q(SL_2(\C))$ isomorphic to $\Oc(PSL_2(\C))$ as Hopf algebras.
\end{itemize}
\end{lemma}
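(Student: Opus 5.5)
The plan is to prove all three items the same way: establish the commutation relations, then the coalgebra structure, then normality, then identify the subalgebra with the correct classical coordinate ring.

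\emph{Step 1: commutation.} The key computational input is the $q$-binomial theorem. If $yx=q^{2}xy$ (or more generally $yx=\zeta xy$), then $(x+y)^n=\sum_k \binom{n}{k}_{\zeta}x^ky^{n-k}$, and the Gaussian binomials $\binom{n}{k}_\zeta$ vanish for $0<k<n$ once $n$ is the order of $\zeta$. Write $\Delta(a)=a\otimes a+b\otimes c$. The two summands $q^2$-commute: using $ab=qba$ and $cb=bc$, $ca=q^{-1}ac$, we get $(b\otimes c)(a\otimes a)=q^{-2}(a\otimes a)(b\otimes c)$. Hence for $\ell$ odd, $\Delta(a^\ell)=a^\ell\otimes a^\ell+b^\ell\otimes c^\ell$, and similarly for $b,c,d$. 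For $\ell=2m$ we have $\ord(q^2)=m$, so $\Delta(x^m)$ is again "primitive-like" of matrix type. The same holds for $q=-1$ with $m=1$, which is trivial.

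\emph{Step 2: the subalgebra is a Hopf subalgebra.}
\begin{itemize}
\item[(i)] $L$ is closed under $\Delta$. Centrality follows since $a^\ell$ commutes with $b$ (as $q^\ell=1$), and $a^\ell d-da^\ell$ is handled by Müller's/De Concini–Lyubashenko's standard computation: $a^\ell d=da^\ell$ using $ad=1+qbc$, $da=1+q^{-1}bc$ and induction.
\item[(ii), (iii)] Here $x^m$ is not central. For instance $a^mb=q^mba^m=-ba^m$, so $x^m$ is only $\pm$-commuting with the generators. Products $x^my^m$ then commute with all generators up to the sign squared, which is $1$. This gives centrality of $N$ and $B$, in particular normality. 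One must check the sign for $a^md$: I expect $a^md=da^m$ still holds, so there the sign is $+$. The coproduct of $x^my^m$ is $\Delta(x^m)\Delta(y^m)$, which lies in $N\otimes N$ after expanding: each term is a product of two $m$-th powers in each tensor factor. The antipode maps $x^m$ to $\pm$ an $m$-th power, so $N$ is stable.
\end{itemize}

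\emph{Step 3: identification.} Define $\Oc(SL_2(\C))\to L$ by $X_{11}\mapsto a^\ell$, and so on. This is well defined once we check $a^\ell d^\ell-b^\ell c^\ell=1$. That check is the main obstacle; I would invoke the known result for odd $\ell$ from \cite{Ken}/\cite{Mu1}, via the quantum Frobenius. For even order, $(a^m,b^m,c^m,d^m)$ $\pm$-commute and satisfy $a^md^m-(-1)^{?}b^mc^m=\pm1$, giving a "$\Oc_{-1}$-type" matrix. The map $X_{ij}X_{kl}\mapsto x^my^m$ then factors through the preceding Lemma's description of $\Oc(PSL_2(\C))$ as even-degree polynomials. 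Injectivity follows from a PBW basis argument: the monomials $a^ib^jc^k$ and $b^ic^jd^k$ form a basis of $\Oc_q(SL_2)$. Their $m$-th power images remain linearly independent and match a basis of the even part of $\Oc(SL_2)$. It is a Hopf map by Step 2, since the coproducts match.

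Item (ii) is the special case $m=1$ with explicit generators. I expect the hardest part to be the determinant-type identity for $m$-th powers when $\ell$ is even, where signs must be tracked carefully.
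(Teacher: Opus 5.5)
Step 1 and part (i) are fine, but there are two genuine gaps in (ii) and (iii).

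\emph{Centrality is false, so your normality argument fails.} The ``sign squared'' reasoning is valid only when $x,y$ are both in $\{a,d\}$ or both in $\{b,c\}$. Since $q^m=-1$, you have $b^ma=q^{-m}ab^m=-ab^m$ and $a^ma=aa^m$, so $(a^mb^m)a=-a(a^mb^m)$. In fact $a^mb^m$ anticommutes with all of $a,b,c,d$. Likewise $(ab)a=-a(ab)$ in $\Oc_{-1}(SL_2(\C))$. So neither $N$ nor $B$ is central, which is why the statement only claims normality.

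The same bookkeeping does show that two generators of $N$ commute, because the number of sign changes is even; so commutativity of $N$ survives. Normality needs a separate argument. For example, each generator $n$ of $N$ either commutes with everything or satisfies $hn=n\gamma(h)$, where $\gamma$ is the parity automorphism $a,b,c,d\mapsto -a,-b,-c,-d$. Now $\gamma(h)=\chi(h_{(1)})h_{(2)}=h_{(1)}\chi(h_{(2)})$ for the character $\chi$ given by evaluation at $-I$. Hence $h_{(1)}nS(h_{(2)})=\chi(h)n$ and $S(h_{(1)})nh_{(2)}=\chi(h)n$, and since the adjoint actions are module-algebra actions, $N$ is stable.

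\emph{The identification is not established, and the map you propose is wrong for odd $m$.} The determinant identity you left open follows from $a^nd^n=\prod_{k=0}^{n-1}(1+q^{2k+1}bc)$, which gives $a^md^m=1+(-1)^mb^mc^m$. Also $x^my^m=(-1)^my^mx^m$ for $x\in\{a,d\}$ and $y\in\{b,c\}$.

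For $m$ even, $X_{ij}\mapsto x_{ij}^m$ is therefore a Hopf algebra map $\Oc(SL_2(\C))\to\Oc_q(SL_2(\C))$. It is injective by PBW and maps $\Oc(PSL_2(\C))$ onto $N$, so your plan works in that case.

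For $m$ odd, in particular $q=-1$, the unsigned assignment $X_{ij}X_{kl}\mapsto x^my^m$ is not multiplicative. It sends $X_{11}X_{22}-X_{12}X_{21}=1$ to $a^md^m-b^mc^m=1-2b^mc^m$. Signs are unavoidable, as in the paper's explicit matrix for (ii): $X_{12}^2\mapsto -b^2$, $X_{12}X_{21}\mapsto -bc$, $X_{12}X_{22}\mapsto -bd$, and so on. Moreover, the preceding lemma describes $\Oc(PSL_2(\C))$ only as a subalgebra of $\Oc(SL_2(\C))$, not by generators and relations. Assigning images to the $X_{ij}X_{kl}$ and ``factoring through'' it therefore does not define an algebra map.

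What does work for odd $m$ is this: $(a^m,b^m,c^m,d^m)$ satisfies exactly the defining relations of $\Oc_{-1}(SL_2(\C))$. So $a\mapsto a^m$, etc., gives an injective Hopf map $\Oc_{-1}(SL_2(\C))\to\Oc_q(SL_2(\C))$ carrying $B$ onto $N$, which reduces (iii) to (ii). But then (ii) is the base case carrying all the content, not a trivial special case. Your proposal contains no proof that $B\cong\Oc(PSL_2(\C))$; you would have to check that a signed assignment like the paper's is a well-defined Hopf algebra isomorphism. The paper writes that map down explicitly, and for (i) and (iii) it cites \cite{Ken} and \cite{Chelsea}.
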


\begin{proof}
    \begin{itemize}
    \item[(i)] The case when the order of $q$ is odd has been studied in \cite[Proposition III.3.1]{Ken}. The isomorphism is given by $X_{11} \to a^{\ell}, X_{12} \to b^{\ell}, X_{21} \to c^{\ell}$ and $X_{22} \to d^{\ell}$.
    \item[(ii)] The isomorphism is given by
        \begin{center}
        $\mathcal{O}(PSL_2(\C)) \stackrel{\varphi}{\longrightarrow} \B  $\\
        $\begin{pmatrix}
        X_{11}^2 & X_{11}X_{12} & X_{12}^2\\
        X_{11}X_{21} & X_{12}X_{21} & X_{12}X_{22}\\
        X_{21}^2 & X_{21}X_{22} & X_{22}^2
        \end{pmatrix} \mapsto \begin{pmatrix}
        a^2 & ab & -b^2\\
        ac & -bc & -bd\\
        -c^2 & -cd & d^2
        \end{pmatrix}.$
        \end{center}
where each entry of the left hand matrix maps to the same entry in the right hand matrix.
    \item[(iii)] The case when the order of $q$ is even but $q^2\neq 1$ follows from \cite[Lemma 6.15]{Chelsea}.
    \end{itemize}
\end{proof}

\begin{remark}
    Each subalgebra $L, B, N$ defines a Hopf ideal $\Oc_q(SL_2(\C))L^{+}, \Oc_q(SL_2(\C))B^{+}$ and $\Oc_q(SL_2(\C))N^{+}$ that coincides with those given in Definitions 2.6.3 and 2.6.4, respectively.
\end{remark}

It is interesting to think about whether $\Oc_q(SL_2(\C))$ fits in an exact sequence of Hopf algebras
\[
\mathcal{O}(\Gamma) \hookrightarrow \mathcal{O}_q(SL_2(\C)) \twoheadrightarrow \overline{\mathcal{O}_q(SL_2(\C))},
\]

with $\Gamma=SL_2(\C)$ or $PSL_2(\C)$. This has been studied by many authors, and the results, which depend on the order of $q$, are shown in the following theorem.

\begin{thm} \label{teoremase}
    \begin{itemize}
        \item[(i)] \cite{Mu1} If the order of $q$ is odd, $\overline{\mathcal{O}_q(SL_2(\C))} \cong \mathfrak{u}_q(\mathfrak{sl}_2)^{\circ}$ and $\Oc_q(SL_2(\C))$ fits in the exact sequence

        \bigskip
    
        \begin{center}
        $\mathcal{O}(SL_2(\C)) \hookrightarrow \mathcal{O}_q(SL_2(\C)) \twoheadrightarrow \mathfrak{u}_q(\mathfrak{sl}_2)^{\circ}. $
        \end{center} 
 Here 
        \begin{align*}
            \overline{\Oc_{q}(SL_{2}(\C))} & =\Oc_q(SL_2(\C))/(\mathcal{O}_q(SL_2(\C)) L^{+}) \\
	& = \Oc_q(SL_2(\C))/(a^{\ell}-1,b^{\ell},c^{\ell},d^{\ell}-1) \\
	& =\widehat{\Oc}_{q}(SL_2(\C)).
        \end{align*}
  (See Definition \ref{widehat})
        
        \bigskip
        
        \item[(ii)]\cite{Bichon} When $q=-1$, $\overline{\mathcal{O}_{-1}(SL_2(\C))} \cong \C \Z_2$ and $\Oc_{-1}(SL_2(\C))$ fits in the exact sequence

        \bigskip
        
        \begin{center}
        $\mathcal{O}(PSL_2(\C)) \hookrightarrow \mathcal{O}_{-1}(SL_2(\C)) \twoheadrightarrow \C \Z_2 $.
        \end{center} 
        \item[(iii)] \cite{Chelsea} If ord$(q)=2m > 2 $ is even,  $\overline{\mathcal{O}_q(SL_2(\C))}
        \cong \mathfrak{u}_{2,q}(\mathfrak{sl}_2)^{\circ}$ and the exact sequence is

        \bigskip
        
        \begin{center}
        $\mathcal{O}(PSL_2(\C)) \hookrightarrow \mathcal{O}_q(SL_2(\C)) \twoheadrightarrow \mathfrak{u}_{2,q}(\mathfrak{sl}_2)^{\circ}. $
        \end{center}

        \bigskip

Here
        
        \begin{align*}
           \overline{\Oc_{q}(SL_{2}(\C))} & = \mathcal{O}_q(SL_2(\C))/(\mathcal{O}_q(SL_2(\C)) N^{+}) \\
	& = \mathcal{O}_q(SL_2(\C))/(a^{2m}-1,b^m,c^m,d^{2m}-1) \\
	& = \bar{\Oc}_{q}(SL_{2}(\C)).
        \end{align*}
        (See Definition \ref{overline})
    \end{itemize}
\end{thm}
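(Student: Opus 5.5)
The theorem has three parts, and they share one strategy. In each case the plan is to apply Proposition \ref{ff} to the inclusion of the relevant commutative Hopf subalgebra, namely $L$, $B$ or $N$ from the preceding lemma. Since that subalgebra is commutative, part (i) of Proposition \ref{ff} gives faithful flatness of $\Oc_q(SL_2(\C))$ over it. If we also show that $\Oc_q(SL_2(\C))X^+ = X^+\Oc_q(SL_2(\C))$ for $X\in\{L,B,N\}$, then part (ii) gives exactness, with the quotient $\Oc_q(SL_2(\C))/\Oc_q(SL_2(\C))X^+$. The lemma already identifies $X$ with $\Oc(SL_2(\C))$ or $\Oc(PSL_2(\C))$. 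So the work reduces to three tasks: check the equality of one-sided ideals, compute the quotient by generators and relations, and identify that quotient with the claimed dual of a small quantum group.

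For (i), $L$ is central, so $\Oc_q L^+ = L^+\Oc_q$ is immediate. The ideal is generated by $a^\ell-1$, $b^\ell$, $c^\ell$, $d^\ell-1$, which gives $\widehat{\Oc}_q(SL_2(\C))$ of Definition \ref{widehat}. To identify it with $\mathfrak u_q(\mathfrak{sl}_2)^\circ$, I would use the Hopf pairing between $\Oc_q(SL_2)$ and $\Uc_q(\mathfrak{sl}_2)$ restricted to $\mathfrak u_q(\mathfrak{sl}_2)$. First I would check that it kills the ideal, for instance $\langle b^\ell, -\rangle = 0$ because $e^\ell=0$ in $\mathfrak u_q$. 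Then I would show the induced pairing is nondegenerate. This follows by comparing dimensions: the PBW monomials $a^ib^jc^k$ (and $d$-variants, or $b^jc^ka^i$ with $0\le i<\ell$) span a space of dimension $\ell^3$, and this space pairs perfectly with the PBW basis $f^re^sk^t$.

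For (ii), I would check normality of $B$ directly. The algebra $\Oc_{-1}$ is $\Z_2$-graded by total degree, and $B$ is exactly the even part. Conjugating a degree-two monomial by $a,b,c,d$ only changes it by a sign, so $B$ is stable under the adjoint action, and this yields $\Oc B^+ = B^+\Oc$. The quotient is generated by the images of $a,b,c,d$ subject to $ab=ac=bc=bd=cd=0$, $a^2=d^2=1$ and $b^2=c^2=0$. Using $ad+bc=1$, we get $d=a^{-1}=a$ and $b=b(ad+bc)=0$, and likewise $c=0$. The quotient is therefore spanned by $1$ and $a$ with $a$ grouplike of order two, that is, it is $\C\Z_2$.

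For (iii), the argument runs as in (i), but $N$ is only normal, not central. The main obstacle here is showing $\Oc_q N^+ = N^+\Oc_q$. My first approach would be to compute, for each generator $x\in\{a,b,c,d\}$ and each monomial $y^mz^m$, the commutation relation $x\,y^mz^m = \lambda\, y^mz^m x$. For products of two $m$-th powers the $q$-commutation factors are powers of $q^m=-1$, and they cancel in pairs, except in cases such as $a^m d^m$, where the $(q-q^{-1})$ corrections appear. For those cases I would use the fact that $b^m$ and $c^m$ are $q$-central, and rewrite $a^md^m$ via the $q$-binomial expansion at $q^2$ of order $m$. The quotient is then $\bar\Oc_q(SL_2(\C))$ of Definition \ref{overline}, and it is identified with $\mathfrak u_{2,q}(\mathfrak{sl}_2)^\circ$ by the same pairing and dimension count as in (i), giving $2m^3$ on both sides. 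Alternatively, all of this can be cited from \cite{Chelsea}.
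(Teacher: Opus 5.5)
The paper gives no argument for this theorem. The three items are quoted from \cite{Mu1}, \cite{Bichon} and \cite{Chelsea}, and the preceding lemma only supplies the Hopf subalgebras $L$, $B$, $N$. You instead reconstruct the statements from the paper's own tools, and your outline is sound. Proposition~\ref{ff}(i) gives faithful flatness because $L,B,N$ are commutative. The equality $\Oc_qX^+=X^+\Oc_q$ then gives exactness via Proposition~\ref{ff}(ii), and the quotients are computed by hand. Compared with the citation, this shows that exactness is elementary: in (ii) the coinvariants of the parity projection onto $\C\Z_2$ are plainly the even part.

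It also isolates the one substantial input: the Hopf pairing with $\Uc_q(\mathfrak{sl}_2)$ must induce a \emph{perfect} pairing between the quotient and the small quantum group. In your text that step is only asserted. A spanning set of $\ell^3$ (resp.\ $2m^3$) monomials gives only an upper bound on the dimension. So ``comparing dimensions'' proves nothing until you show that the pairing matrix against a PBW basis of $\mathfrak{u}_q(\mathfrak{sl}_2)$, resp.\ $\mathfrak{u}_{2,q}(\mathfrak{sl}_2)$, is invertible. A weight argument in the $b$- and $c$-degrees does this for suitably ordered PBW-type bases. It makes the matrix block diagonal, with blocks that are character tables of cyclic groups up to nonzero scalars and a permutation. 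That single computation gives both linear independence in the quotient and the identification with the dual. Without it you are back to citing, as the paper does.

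Some details in (iii) should be corrected.
\begin{itemize}
\item No $(q-q^{-1})$ corrections occur. Since $a\,bc=q^{2}\,bc\,a$, one has $a^md-da^m=(q-q^{-1})\frac{q^{2m}-1}{q^2-1}\,bc\,a^{m-1}=0$. So each of $a^m,b^m,c^m,d^m$ commutes with every generator up to a sign, and hence so does every $x^my^m$.
\item The signs need not cancel: $a^mb^m$ anticommutes with all of $a,b,c,d$. A sign is nonetheless all you need for $\Oc_q(g-\varepsilon(g))=(g-\varepsilon(g))\Oc_q$. The generators with $\varepsilon(g)\neq0$, namely $a^{2m}$, $d^{2m}$, $a^md^m$, are central.
\item Your $q$-binomial idea belongs to a step you skipped, namely $(\Oc_qN^+)=(a^{2m}-1,b^m,c^m,d^{2m}-1)$. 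For this use $a^md^m=\prod_{i=1}^m(1+q^{2i-1}bc)=1+(-1)^mb^mc^m$, together with $b^m\equiv a^m\cdot a^mb^m$ modulo $a^{2m}-1$.
\item Your count $2m^3$ is right, but it requires $k$ of order $2m$ and $e^m=f^m=0$. Definition~\ref{defu2q} as printed (orders $2\ell$ and $\ell$, dimension $2\ell^3$) does not match $\dim\bar{\Oc}_q(SL_2(\C))=2m^3$. Moreover, with $k$ of order $2\ell$ the element $k^{2m}$ would pair with $\Oc_q(SL_2(\C))$ exactly as $1$ does.
\end{itemize}
In (ii), add the check that $a\neq1$ in the quotient. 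For instance, the character $a,d\mapsto-1$, $b,c\mapsto0$ agrees with $\varepsilon$ on $B$ and separates $a$ from $1$.
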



\section{Quantum subgroups of $\mathcal{O}_q(SL_2(\C))$} \label{section3}

We want to compute the quantum subgroups of $\mathcal{O}_q(SL_2(\C))$ for different values of the parameter $q$. That is, find the quotients of Hopf Algebras for $\mathcal{O}_q(SL_2(\C))$. If $\rho: \mathcal{O}_q(SL_2(\C)) \twoheadrightarrow A$ is an epimorphism of Hopf algebras, we first construct the following commutative diagram

$$\xymatrix {
    B \ar@{^{(}->}[r]^(.3){\iota} \ar@{->>}[d]^{p} 
    & \mathcal{O}_q(SL_2(\C)) \ar@{->>}[r]^(.6){\pi} \ar@{->>}[d]^{\rho} & H \ar@{->>}[d]^{r} \\
    \mathcal{O}(\Gamma) \ar@{^{(}->}[r]^(.5){\bar{\iota}}  & A \ar@{->>}[r]^(.5){\bar{\pi}} & \overline{H}
    }$$

in which the rows are exact sequences. We will apply similar strategies as Müller in \cite{Mu1} to find the quotients. For a more general approach, see \cite{AG}. \\

\begin{prop} \label{propiso}
    Consider the commutative diagram of Hopf algebras whose rows are exact sequences

$$\xymatrix {
    \Oc(\Gamma) \ar@{^{(}->}[r]^(.4){\iota} \ar@{=}[d]  & \Oc_q(SL_2(\C)) \ar@{->>}[r]^(.6){\pi} \ar@{->>}[d]^{\rho} & H \ar@{=}[d] \\
    \Oc(\Gamma) \ar@{^{(}->}[r]^{\overline{\iota}}  & A' \ar@{->>}[r]^{\overline{\pi}} & H
    }$$

Assume that $\dim H$ is finite. Then $\rho$ is an isomorphism.
\end{prop}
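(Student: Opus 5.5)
Since $\rho$ is surjective by hypothesis, it suffices to prove that $\rho$ is injective, i.e.\ $\ker\rho=0$. The plan is to compare the two rows as Hopf modules over $\Oc(\Gamma)$, using faithful flatness and the fact that both rows share the same subalgebra $\Oc(\Gamma)$ and the same finite-dimensional quotient $H$. Set $B=\Oc(\Gamma)$ and $A=\Oc_q(SL_2(\C))$. Commutativity of the left square gives $\rho\circ\iota=\overline{\iota}$, which is injective, so $\ker\rho\cap B=0$. Commutativity of the right square gives $\overline{\pi}\circ\rho=\pi$.

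The first step is a structure statement for $A$ over $B$. Since $B$ is commutative, Proposition \ref{ff}(i) shows that $A$ is faithfully flat over $B$, and the row is exact with $H=A/AB^+$. For a faithfully flat Hopf–Galois extension with finite-dimensional quotient, the canonical map
\[
A\otimes_B A\longrightarrow A\otimes H,\qquad x\otimes y\mapsto xy_{(1)}\otimes\pi(y_{(2)}),
\]
is bijective (Takeuchi, Schneider). The same holds for $A'$ over $\overline{\iota}(B)$. To use this, I would view $A$ and $A'$ as right $H$-comodule algebras, with $\rho$ an $H$-colinear algebra map satisfying $\rho|_B=\overline\iota$, and as $B$-modules via $\iota$ and $\overline\iota$.

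The key step is to show that $\ker\rho$ is generated by its intersection with $B$. Let $I=\ker\rho$. This is a Hopf ideal of $A$, and since $\overline\pi\rho=\pi$ we have $\pi(I)=0$, so $I\subseteq AB^+$. By Proposition \ref{prop1mu}, $A/I\cong A'$ fits into an exact sequence $B/(B\cap I)\hookrightarrow A'\twoheadrightarrow H/\pi(I)=H$, which is consistent with $B\cap I=0$. Now I would apply Schneider's theorem: for faithfully flat $H$-Galois extensions with the same coinvariants, any $H$-colinear $B$-linear algebra map between them is an isomorphism. Concretely, apply the Galois isomorphism to $A'\otimes_A(-)$. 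The base-changed map $A'\otimes_B A\to A'\otimes_B A'$ becomes $\id\otimes\id_H$ on $A'\otimes H$. By faithful flatness of $A'$ over $B$, the map $A\to A'$ is then bijective.

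The main obstacle is justifying that $A'$ is faithfully flat over $\overline\iota(B)$ and that its Galois map is bijective. Here the finite dimensionality of $H$ is essential: one needs $A'$ to be a finitely generated projective $B$-module, or at least that $A'^{\co\overline\pi}=B$ together with faithful flatness. I would obtain faithful flatness again from Proposition \ref{ff}(i), since $\overline\iota(B)$ is a commutative Hopf subalgebra of $A'$. Bijectivity of the Galois map then follows from Proposition \ref{ff}(ii) and Schneider's criterion. If this route is too heavy, an alternative is the direct argument: pick $x\in I$, write $x$ via the isomorphism $A\otimes_B A\cong A\otimes H$, and use $\overline\pi\rho=\pi$ together with injectivity on $B$ to force $x=0$.
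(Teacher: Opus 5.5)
Your argument is correct, but it takes a different route from the paper.

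\textbf{The paper's route.} It gets faithful flatness of $A'$ over $\Oc(\Gamma)$ from Proposition \ref{ff}(i), exactly as you do. It then notes that $\Oc_q(SL_2(\C))$, and hence $A'$, is noetherian, and finishes by citing the proof of \cite[(1.15)]{AG}. That citation is presumably where its hypotheses $A'$ noetherian and $\dim H<\infty$ are used.

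\textbf{Your route.} You replace the citation with the Hopf--Galois base-change argument. Write $B=\Oc(\Gamma)$, $\mathrm{can}_A(x\otimes y)=xy_{(1)}\otimes\pi(y_{(2)})$, and define $\mathrm{can}_{A'}$ similarly. Then $\overline{\pi}\rho=\pi$ gives
\[
\mathrm{can}_{A'}\circ(\id_{A'}\otimes_B\rho)=\id_{A'}\otimes_A\mathrm{can}_A \colon A'\otimes_B A\longrightarrow A'\otimes H .
\]
So $\id_{A'}\otimes_B\rho$ is bijective, and faithful flatness of $A'$ over $\overline{\iota}(B)$ forces $\rho$ to be bijective.

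\textbf{What your route buys.} It is self-contained and strictly more general: it uses neither noetherianity nor $\dim H<\infty$.

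\textbf{Corrections to your write-up.}
\begin{itemize}
\item Your remark that ``the finite dimensionality of $H$ is essential'' is mistaken. For a Hopf subalgebra $B\subseteq A$ with $AB^+=B^+A$ (true here, since the sequences are normal), the map $A\otimes_B A\to A\otimes A/AB^+$ is bijective with no flatness or finiteness assumption.
\item For the same reason, your appeals to Proposition \ref{ff}(ii) and to Schneider's theorem are unnecessary. The only nontrivial input is Proposition \ref{ff}(i) for $A'$, which the paper also uses.
\item Several pieces play no role and can be dropped: the announced ``key step'' that $\ker\rho$ is generated by $\ker\rho\cap B$, the inclusion $\ker\rho\subseteq AB^+$, the remark via Proposition \ref{prop1mu}, and the vague ``direct argument'' at the end.
\end{itemize}
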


\begin{proof}
    As $\Oc(\Gamma)$ is commutative,  then by Proposition \ref{ff} $A'$ is faithfully flat over $\Oc(\Gamma)$. Since $\Oc_{q}(SL_2(\C))$ is noetherian (\cite[(I.1.18.)]{Ken}), then $A'$ is also noetherian and by the proof of \cite[(1.15)]{AG}, $\rho$ is an isomorphism.
\end{proof}

\subsection{Quantum subgroups of $\Oc_{q}(SL_{2}(\C))$, $\ord(q)=\ell$ odd}  \label{sectionodd}

Recall that if ord($q)=\ell>2$ is odd, then $\Oc_q(SL_2(\C))$ fits in the exact sequence

    \bigskip
    
    \begin{center}
    $\mathcal{O}(SL_2(\C)) \hookrightarrow \mathcal{O}_q(SL_2(\C)) \twoheadrightarrow \mathfrak{u}_q(\mathfrak{sl}_2)^{\circ} $.
    \end{center} 

M\"uller in \cite{Mu1} found all the quotients of $\Oc_q(SL_n(\C))$ with these conditions for $q$. We start this section  presenting the main result for the case $n=2$. 

\begin{defi}\label{def:subgroup-data-qodd}
    Let $q$ be a root of unity of odd order $\ell$. An odd subgroup data is a collection $\mathcal{D} = (I_{+}, I_{-}, N, \Gamma, \sigma, \delta)$ such that:

\begin{itemize}
    \item $I_{+}, I_{-} \in \{\emptyset, \{1\}\}$. 
    
    \item $N$ is a subgroup of $(\mathbb{Z}_{\ell})^{s}$, where $s = 1 - |I_{+} \cup I_{-}|$.
    
    \item $\Gamma$ is a  group.
    
    \item $\sigma \colon \Gamma \to L$ is an injective group morphism, where $L$ is a subgroup of $SL_2(\C)$ determined by the sets $I_{+}$ and $I_{-}$. 
    
    \item $\delta \colon N \to \widehat{\Gamma}$ is a group morphism.
\end{itemize}
\end{defi}

\begin{remark} \label{defl}
    In the construction of quantum subgroups, we will take quotients of $\Oc_q(SL_2(\C))$ using the following notation: $I_{+}, I_{-} \in \{\emptyset, \{1\}\}$ where the choice $I_{+}=\{\emptyset\}$ corresponds to taking the quotient by the ideal generated by $c$, $I_{-}=\{\emptyset\}$ corresponds to taking the ideal generated by $b$ and if $I_{\pm}=\{1\}$ we do not take quotients. These sets define an algebraic Lie subalgebra $\mathfrak{l}$ of $\mathfrak{sl}_{2}(\C)$ with connected Lie subgroup $L$ of $SL_2(\C)$, such that $\mathfrak{l} = \mathfrak{l}_{+} \oplus \C h \oplus \mathfrak{l}_{-}$ and $\mathfrak{l}_{+} = \C e$ or $\mathfrak{l}_{+} = 0$ and $\mathfrak{l}_{-} = \C f$ or $\mathfrak{l}_{-} = 0$. 
\end{remark}

We have the following result that characterizes all the subgroups of $\Oc_q(SL_2(\C))$ for $q$ a root of unity of odd order. We will follow the notation given in \cite{AG}, and definitions of quantum subgroup isomorphism and subgroup data equivalence given in Definition \ref{isoquot} and Definition \ref{isodata}, respectively.

\begin{thm} \cite{Mu1}
   Let $q$ be a root of unity of odd order. There exists a bijection between
    \begin{enumerate}
        \item[(i)]  Hopf algebra quotients $\rho:\Oc_q(SL_2(\C)) \twoheadrightarrow A$ up to isomorphism.
        \item[(ii)] Odd subgroup data up to equivalence.
    \end{enumerate}
\end{thm}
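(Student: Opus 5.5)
The plan follows Müller's strategy, specialized to $n=2$, in two directions: build a quotient from each datum, and recover a datum from each quotient.

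\emph{Construction.} Fix an odd subgroup datum $\mathcal{D}=(I_+,I_-,N,\Gamma,\sigma,\delta)$.
\begin{enumerate}
\item Let $\mathcal{O}_q(\mathfrak{l})$ be the quotient of $\Oc_q(SL_2(\C))$ by the ideal generated by $c$ (if $I_+=\emptyset$) and/or $b$ (if $I_-=\emptyset$).
\item The central subalgebra $L\cong\Oc(SL_2(\C))$ maps onto $\Oc(L)$, where $L\subseteq SL_2(\C)$ is the subgroup attached to $\mathfrak{l}$ in Remark \ref{defl}. The corresponding finite quotient of $\mathfrak{u}_q(\mathfrak{sl}_2)^{\circ}$ is $\mathfrak{u}_q(\mathfrak{l})^{\circ}$.
\item Compose with $\sigma^t:\Oc(L)\twoheadrightarrow\Oc(\Gamma)$ and apply Proposition \ref{prop2mu}. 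This gives the pushout $A_1=\mathcal{O}_q(\mathfrak{l})/(\ker\sigma^t)$, sitting in an exact sequence $\Oc(\Gamma)\hookrightarrow A_1\twoheadrightarrow \mathfrak{u}_q(\mathfrak{l})^{\circ}$.
\item When $s=1$ (no $b$ or $c$ killed), the torus part of $\mathfrak{u}_q(\mathfrak{l})^{\circ}$ is $\C[\Z_\ell]$, generated by the image of $a$.
\item For $g\in N$, impose the relation that the group-like $\bar a^{\,\ell}$-twist, namely the character of $N$, is identified with $\delta(g)\in\widehat{\Gamma}\subseteq\Oc(\Gamma)$. Concretely, take the quotient by the Hopf ideal generated by the elements $a^{k}\otimes$-components, i.e.\ $\bar a^{\,k}-\delta(k)$ for $k\in N$.
\item Check that this is a Hopf ideal. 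This uses that $\delta(k)$ is group-like and central.
\item Apply Proposition \ref{prop1mu} to get an exact sequence $\Oc(\Gamma)\hookrightarrow A_{\mathcal{D}}\twoheadrightarrow \mathfrak{u}_q(\mathfrak{l})^{\circ}/(\text{$N$-relations})$.
\end{enumerate}
Equivalent data give isomorphic quotients, since $\sigma^t$ is changed by an isomorphism.

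\emph{Determination.} Let $\rho:\Oc_q(SL_2(\C))\twoheadrightarrow A$ be a quotient.
\begin{enumerate}
\item By Proposition \ref{prop1mu} there is an exact sequence $L/\mathcal{J}\hookrightarrow A\twoheadrightarrow H'$, where $\mathcal{J}=L\cap\ker\rho$ and $H'$ is a quotient of $\mathfrak{u}_q(\mathfrak{sl}_2)^{\circ}$.
\item Since $L/\mathcal{J}$ is a commutative quotient of $\Oc(SL_2(\C))$, it equals $\Oc(\Gamma)$ for a closed subgroup $\sigma:\Gamma\hookrightarrow SL_2(\C)$.
\item Dually, $H'^{*}$ is a Hopf subalgebra of $\mathfrak{u}_q(\mathfrak{sl}_2)$. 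Classify these: each is generated by a subgroup of $\langle k\rangle\cong\Z_\ell$ together with possibly $e$ and/or $f$. For odd $\ell$, whenever $e$ or $f$ lies in it, the whole group $\langle k\rangle$ does, because $\ell$ is odd and $q^2$ has order $\ell$.
\item From this read off $I_\pm$ according to whether $e,f\in H'^{*}$, and read off $N$ as the kernel of the restriction on the group part. The constraint $s=1-|I_+\cup I_-|$ is exactly the statement that $N$ is trivial once $e$ or $f$ survives.
\item If $e\notin H'^*$, then $\rho(c)$ is killed: it is skew-primitive modulo lower terms and pairs trivially with the surviving subalgebra. Hence $\rho$ factors through $\mathcal{O}_q(\mathfrak{l})$, and $\sigma$ lands in $L$.
\item Define $\delta$ by sending $k\in N$ to the group-like $\rho(a)^{k}$. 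This element is group-like because, modulo the kernel, $\rho(b),\rho(c)$ vanish on the relevant coradical part, and it lies in $\Oc(\Gamma)$ since it is coinvariant.
\item Compare $A$ with $A_{\mathcal{D}}$: the quotient map $A_{\mathcal{D}}\twoheadrightarrow A$ fits a diagram as in Proposition \ref{propiso}, with equal outer terms and $\dim H'<\infty$. Hence $A_{\mathcal{D}}\cong A$ as quantum subgroups.
\end{enumerate}

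\emph{Main obstacle.} I expect the hardest part to be the well-definedness of $\delta$: showing that the powers $\rho(a)^{k}$ for $k\in N$ are group-like and actually lie in $\bar\iota(\Oc(\Gamma))$. Related to this is proving that $\sigma$ factors through $L$ when $I_\pm=\emptyset$. My first attempt would be to compute coproducts of $a^{k}$ modulo the ideal generated by $b$ and $c$, using the $q$-binomial vanishing at $\ell$. A further check is needed to make the data-to-quotient map injective, which is where equivalence of data enters.
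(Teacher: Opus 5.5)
Your outline has the same architecture as the paper's argument, which follows \cite{AG}. You build $A_{\mathcal D}$ from $\Oc_q(L)$, the pushout of Proposition \ref{prop2mu} and the ideal $J_\delta$. You recover the datum via Proposition \ref{prop1mu} and the Hopf subalgebra $H'^{*}\subseteq\mathfrak{u}_q(\mathfrak{sl}_2)$, and you conclude with Proposition \ref{propiso}.

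\textbf{The gap: Step 5 of your determination.} That $\pi(c)$ pairs trivially with $H'^{*}$ only says $r\pi(c)=0$, i.e.\ $\bar\pi\rho(c)=0$, i.e.\ $\rho(c)\in\ker\bar\pi=A\,\Oc(\Gamma)^{+}$. It does not give $\rho(c)=0$. Skew-primitivity does not close this either, since $\ker\bar\pi\supseteq\Oc(\Gamma)^{+}$ typically contains nonzero skew-primitives, e.g.\ $\sigma^{t}(X_{12})$ for $\Gamma=B^{+}$. Concretely, nothing you say excludes $\Gamma$ containing an antidiagonal matrix, and then $\rho(c)^{\ell}=\rho(c^{\ell})=\sigma^{t}(X_{21})\neq 0$.

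Ruling this out is part of the assertion that $\sigma(\Gamma)\subseteq L$ and $\ker v_L\subseteq\ker\rho$. The paper does not derive this from the pairing; it imports it as \cite[Lemma 3.2]{AG}, and in the even case gives a separate PBW-basis argument. Without it there is no map $A_{q,\mathfrak{l},\sigma}\to A$, let alone $A_{\mathcal D}\twoheadrightarrow A$. So your Step 7 and Proposition \ref{propiso} cannot be invoked.

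For the same reason, your diagnosis of the main obstacle is misplaced. Once $\rho(b)=\rho(c)=0$, $\rho(a)$ is grouplike because $\Delta(a)=a\otimes a+b\otimes c$. For $N=(p)$ one has $\bar\pi\rho(a)^{p}=1$, so $\rho(a)^{p}$ is coinvariant and lies in $\Oc(\Gamma)$. This is routine, and it is exactly how the paper produces $\delta$.

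\textbf{Further points.}
\begin{enumerate}
\item In the construction you assert $\Oc(\Gamma)\hookrightarrow A_{\mathcal D}$, but Proposition \ref{prop1mu} only gives $\Oc(\Gamma)/(J_\delta\cap\Oc(\Gamma))$. You never check $J_\delta\cap\Oc(\Gamma)=0$, and it is not automatic. The element $\bar a^{\ell}=\sigma^{t}(X_{11})$ is already a grouplike of $\Oc(\Gamma)$, generally $\neq 1$. So the generators ``$\bar a^{k}-\delta(k)$, $k\in N$'' depend on the integer representative of $k$. Moreover, imposing $\bar a^{p}=\delta(p)$ with $\ell=pt$ forces $\delta(p)^{t}=\bar a^{\ell}$ inside $\Oc(\Gamma)$; compare the condition $rt\equiv 1 \bmod k$ in the paper's example. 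This compatibility must be accounted for, or $J_\delta$ meets $\Oc(\Gamma)$.
\item Injectivity of the map from data to quotients is left undone; the paper cites \cite[Theorem 2.20]{AG}.
\item $s=1$ is the case where \emph{both} $b$ and $c$ are killed, not neither.
\item In Step 3 the parity of $\ell$ is irrelevant. $k$ lies in any subcoalgebra containing $e$ simply because $\Delta(e)=1\otimes e+e\otimes k$. The classification of Hopf subalgebras itself is a cited result \cite[(1.12)]{AG}.
\item With the conventions of Case II, it is $f\notin H'^{*}$, not $e\notin H'^{*}$, that should force $\rho(c)=0$.
\end{enumerate}
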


The procedure to prove this characterization is as follows: first, construct the quotients and then see that the construction made before is comprehensive, that is, every subgroup of $\Oc_q(SL_2(\C))$ can be constructed in this way. The proof that it is a bijection follows from \cite[ Theorem 2.20]{AG}.

\subsection{Construction of quantum subgroups.} \label{constructionodd}
For the first part, we have the following theorem.


\begin{thm} \cite[Theorem 2.17]{AG}
    For every odd subgroup data $\mathcal{D} = (I_{+}, I_{-}, N, \Gamma, \sigma, \delta)$, there exists a Hopf algebra $A_{\mathcal{D}}$ that is a quotient of $\Oc_{q}(SL_2(\C))$ and fits into the exact sequence
    $$ 1 \rightarrow \Oc(\Gamma) \xrightarrow{\tilde{\iota}} A_{\mathcal{D}} \xrightarrow{\hat{\pi}} H \rightarrow 1, $$

Moreover, $A_{\mathcal{D}}$ is given by the quotient of  $\mathcal{O}_q(L)/(\mathcal{J})$ by $J_{\delta}$ where $\mathcal{J}=\ker \sigma^t$ and  $J_{\delta}$ is the two-sided ideal generated by some grouplike elements and the following diagram of exact sequences of Hopf algebras is commutative:

$$\xymatrix {
    \mathcal{O}(SL_2(\C)) \ar@{^{(}->}[r]^{\iota} \ar@{->>}[d]^{u_L} 
    & \mathcal{O}_q(SL_2(\C)) \ar@{->>}[r]^{\pi} \ar@{->>}[d]^{v_L} & \mathfrak{u}_q(\mathfrak{sl}_2)^{\circ} \ar@{->>}[d]^{w_L} \\
    \mathcal{O}(L) \ar@{^{(}->}[r]^{\iota_L} \ar@{->>}[d]^{\sigma^t}  & \mathcal{O}_q(L) \ar@{->>}[r]^{\pi_L} \ar@{->>}[d]^{s} & \mathfrak{u}_q(\mathfrak{l})^{\circ} \ar@{=}[d] \\
    \Oc(\Gamma) \ar@{^{(}->}[r]^{\widehat{\iota}} \ar@{=}[d] & \mathcal{O}_q(L)/(\mathcal{J}) \ar@{->>}[r]^{\widehat{\pi}} \ar@{->>}[d]^{t} & \mathfrak{u}_q(\mathfrak{l})^{\circ} \ar@{->>}[d]^{w} \\
    \Oc(\Gamma) \ar@{^{(}->}[r]^{\bar{\iota}} & A_{\mathcal{D}} \ar@{->>}[r]^{\bar{\pi}} & H
    }$$
    
\end{thm}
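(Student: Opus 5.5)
The plan is to build $A_{\mathcal{D}}$ in three successive quotient steps, one for each lower row of the diagram. At each step the top row is an exact sequence and the left vertical map is a surjection of commutative Hopf algebras; Proposition \ref{prop2mu} (pushout along a normal, faithfully flat Hopf subalgebra) then gives the next row. Faithful flatness holds throughout by Proposition \ref{ff}(i), because the subalgebras involved are commutative.

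\textbf{Step 1: the row for $L$.} From the sets $I_{+}, I_{-}$ we define $\mathcal{O}_q(L)$ as $\Oc_q(SL_2(\C))$ modulo the ideal generated by $c$ and/or $b$, as in Remark \ref{defl}. Let $v_L$ be the resulting projection. Then:
\begin{itemize}
\item the central subalgebra $\Oc(SL_2(\C))$ generated by $a^{\ell},b^{\ell},c^{\ell},d^{\ell}$ maps onto the subalgebra generated by the images of these elements;
\item since $c^{\ell}$ (resp. $b^{\ell}$) lies in the ideal, this image is exactly $\Oc(L)$, i.e. $\Oc(SL_2(\C))/(X_{21})$ or $\Oc(SL_2(\C))/(X_{12})$;
\item the ideal $(c)$ is generated by $c$ with $c^{\ell}\in\Oc(SL_2(\C))^+ $, so the quotient of $\mathfrak{u}_q(\mathfrak{sl}_2)^{\circ}$ by $\pi(c)$ is $\mathfrak{u}_q(\mathfrak{l})^{\circ}$, the dual of the Borel (or Cartan) part.
\end{itemize}
Exactness of the second row then follows from Proposition \ref{prop1mu}. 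We also need the equality $\Oc(SL_2(\C))\cap (c)=(X_{21})$. This can be checked with the PBW-type basis $a^ib^jc^k$, $b^jc^kd^i$ of $\Oc_q(SL_2(\C))$; alternatively, one shows directly that $\mathcal{O}_q(L)$ is a free $\Oc(L)$-module of rank $\dim \mathfrak{u}_q(\mathfrak{l})^{\circ}$.

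\textbf{Step 2: pushing out along $\sigma$.} The morphism $\sigma:\Gamma\to L$ is injective, so $\sigma^t:\Oc(L)\to\Oc(\Gamma)$ is surjective. Apply Proposition \ref{prop2mu} with $B=\Oc(L)$, $A=\mathcal{O}_q(L)$, $r=\sigma^t$ and $\mathcal{J}=\ker\sigma^t$. This gives the third row with the same right-hand term $\mathfrak{u}_q(\mathfrak{l})^{\circ}$, and $\Oc(\Gamma)$ embeds as a normal (indeed central) Hopf subalgebra.

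\textbf{Step 3: twisting by $\delta$ (main obstacle).} This step is where I expect the real work.
\begin{itemize}
\item The group $G(\mathfrak{u}_q(\mathfrak{l})^{\circ})$ contains $(\Z_{\ell})^{s}$ in the case $s=1$, generated by the class of $a$ (or of $d$).
\item For each $n\in N$, choose a grouplike lift $g_n$ of $n$ in $\mathcal{O}_q(L)/(\mathcal{J})$. A natural choice is a suitable power of $\bar a$; its $\ell$-th power lies in $\Oc(\Gamma)$ and is grouplike there, i.e. a character of $\Gamma$.
\item Define $J_\delta$ as the two-sided ideal generated by the elements $g_n - \delta(n)$, for $n\in N$, where $\delta(n)\in\widehat{\Gamma}\subseteq\Oc(\Gamma)$. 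Set $A_{\mathcal{D}}$ to be the quotient by $J_\delta$, and let $H$ be the quotient of $\mathfrak{u}_q(\mathfrak{l})^{\circ}$ by the Hopf ideal generated by $\pi(g_n)-1$.
\end{itemize}
The elements $g_n-\delta(n)$ are differences of grouplikes, so $J_\delta$ is a Hopf ideal. The delicate point is to show that $J_\delta\cap\Oc(\Gamma)=0$, so that $\Oc(\Gamma)$ still injects into $A_{\mathcal{D}}$. The compatibility conditions, namely that $\delta$ is a group morphism and that the lifts commute with the central $\Oc(\Gamma)$ up to the correct scalar, should make $A_{\mathcal{D}}$ isomorphic to a crossed-product or cleft quotient. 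Concretely, $A_{\mathcal{D}}$ would be free over $\Oc(\Gamma)$ with basis indexed by a basis of $H$. I would prove this by exhibiting such a basis: take the PBW monomials of $\mathcal{O}_q(L)/(\mathcal{J})$ over $\Oc(\Gamma)$ and identify the monomials that differ by the $g_n$.

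Once the basis is established, exactness of the bottom row follows from Proposition \ref{ff}(ii), since $\Oc(\Gamma)^+A_{\mathcal{D}}$ is the kernel of $\hat\pi$. Commutativity of the whole diagram holds by construction.
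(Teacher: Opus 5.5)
Your Steps 1 and 2 match the construction the paper sketches after the statement: quotient by $(c)$, $(b)$ or $(b,c)$, then the pushout along $\sigma^t$ from Proposition~\ref{prop2mu}, with faithful flatness from Proposition~\ref{ff}. They are fine.

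The gap is Step 3, which you leave at ``should''. The injectivity $J_\delta\cap\Oc(\Gamma)=0$ is never proved, and the mechanism you propose for it does not work. In case I everything is commutative. $\Oc_q(L)/(\mathcal{J})=\C[\bar a^{\pm1}]/(\mathcal{J})$ is the group algebra of a group $G$ sitting in an extension $1\to\widehat\Gamma\to G\to\Z_\ell\to 1$. Here $\widehat\Gamma=\langle\chi\rangle$, where $\chi=\sigma^t(X_{11})$ is the faithful character of $\Gamma\subseteq T$, and its image is $\bar a^{\ell}$. So nothing ``commutes up to a scalar'', and no PBW or crossed-product argument is needed. But take $g_p=\bar a^{p}$ and $\delta$ a group morphism on $N=(p)\cong\Z_t$. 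Then modulo $J_\delta$ you get $\chi=\bar a^{pt}=\delta(p)^t=\delta(tp)=1$. Thus $\chi-1$ is a nonzero element of $J_\delta\cap\Oc(\Gamma)$ whenever $\Gamma\neq 1$ and $N\neq 0$. This is why the paper's worked example ($\Gamma=\Z_k$, $\delta(p)=\chi^r$) ends with the constraint $rt\equiv 1\bmod k$, i.e.\ $\delta(p)^t=\chi$. That constraint is incompatible with $\delta$ being a morphism on $\Z_t$, which would require $rt\equiv 0\bmod k$.

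The missing idea is the correct compatibility condition, and it is purely group-theoretic. Hopf quotients of $\C G$ are the algebras $\C[G/M]$. The subalgebra $\Oc(\Gamma)=\C\widehat\Gamma$ injects exactly when $M\cap\widehat\Gamma=1$. In that case $M$ maps isomorphically onto some $N\subseteq\Z_\ell$, the quotient by $\Oc(\Gamma)^+$ is $H=\C[\Z_\ell/N]$, and exactness is immediate. So a quotient with prescribed $N$ amounts to a splitting of the extension over $N$.

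For $\Gamma=\mu_k$ one has $G\cong\Z_{\ell k}$, and the preimage of $N$ is cyclic of order $tk$, so a splitting exists iff $\gcd(t,k)=1$. When it exists, the splittings form a torsor under $\mathrm{Hom}(N,\widehat\Gamma)$. That is the only sense in which a group morphism $\delta$ parametrizes them. Concretely, take the multiplicative lift $g_p=\bar a^{p}\chi^{j}$ with $1+tj\equiv 0\bmod k$, and impose $g_p=\delta(p)$.

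For example, with $\ell=k=t=3$ the algebra is $\C\Z_9$, and any relation $\bar a=\chi^{r}$ forces $\bar a=1$. So no choice of lifts gives an injective map $\Oc(\Gamma)\to A_{\mathcal{D}}$. Hence Step 3 cannot be completed for an arbitrary group morphism $\delta$ as you set it up. You must add a compatibility hypothesis, such as $\delta(p)^t=\sigma^t(X_{11})$, or a multiplicative choice of lifts, which needs the splitting. Once you do, injectivity and exactness follow directly from the group-algebra description, not from a basis computation.
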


The construction is done in three steps where the first one is as follows: Take $I_{+},I_{-} \in \{ \emptyset, \{1\}\}$. Then one can construct a Hopf subalgebra $\mathfrak{u}_q(\mathfrak{l}) \subseteq \mathfrak{u}_q(\mathfrak{sl}_2)$  and an algebraic subgroup $L$ of $SL_2(\C)$ such that $Lie(L)=\mathfrak{l}$. We have an epimorphism  $w_L : \mathfrak{u}_q(\mathfrak{sl}_2)^{\circ}
\rightarrow \mathfrak{u}_q(\mathfrak{l})^{\circ}$ of Hopf algebras and
 the following commutative
diagram of exact sequences of Hopf algebras

$$\xymatrix {
    \mathcal{O}(SL_2(\C)) \ar@{^{(}->}[r]^{\iota} \ar@{->>}[d]^{u_L} 
    & \mathcal{O}_q(SL_2(\C)) \ar@{->>}[r]^{\pi} \ar@{->>}[d]^{v_L} & \mathfrak{u}_q(\mathfrak{sl}_2)^{\circ} \ar@{->>}[d]^{w_L} \\
    \mathcal{O}(L) \ar@{^{(}->}[r]^{\iota_L}  & \mathcal{O}_q(L) \ar@{->>}[r]^{\pi_L} & \mathfrak{u}_q(\mathfrak{l})^{\circ}
    }$$
 
After this construction we obtain four possible diagrams that are shown below.

\begin{enumerate}
    \item[Case I:] Let $I_{+}=\{\emptyset\}$ and $I_{-}=\{\emptyset\}$. We take the quotient by the ideals generated by $\{b\}$ and $\{c\}$ so $\mathfrak{l} =  \C h$ and the diagram is 
    $$\xymatrix {
    \mathcal{O}(SL_2(\C)) \ar@{^{(}->}[r]^{\iota} \ar@{->>}[d]^{u_L} 
    & \mathcal{O}_q(SL_2(\C)) \ar@{->>}[r]^{\pi} \ar@{->>}[d]^{v_L} & \mathfrak{u}_q(\mathfrak{sl}_2)^{\circ} \ar@{->>}[d]^{w_L} \\
    \mathcal{O}(SL_2(\C))/( X_{12}, X_{21} ) \ar@{^{(}->}[r]^(.5){\iota_L}  & \mathcal{O}_q(SL_2(\C))/(b,c) \ar@{->>}[r]^(.6){\pi_L} & \C\Z_{\ell}
    }$$
Here $\mathcal{O}(SL_2(\C))/( X_{12}, X_{21} )$ corresponds to the function algebra on the torus given by diagonal matrices.

    \item[Case II:] Let  $I_{+}=\{1\}$ and $I_{-}=\{\emptyset\}$. We take the quotient by $(c)$ so $\mathfrak{l} = \C e \oplus \C h$ and obtain
    $$\xymatrix {
    \mathcal{O}(SL_2(\C)) \ar@{^{(}->}[r]^{\iota} \ar@{->>}[d]^{u_L} 
    & \mathcal{O}_q(SL_2(\C)) \ar@{->>}[r]^{\pi} \ar@{->>}[d]^{v_L} & \mathfrak{u}_q(\mathfrak{sl}_2)^{\circ} \ar@{->>}[d]^{w_L} \\
    \mathcal{O}(SL_2(\C))/( X_{21}) \ar@{^{(}->}[r]^{\iota_L}  & \mathcal{O}_q(SL_2(\C))/(c) \ar@{->>}[r]^(.6){\pi_L} & \mathfrak{u}_q(\mathfrak{b}^{+})^{\circ}
    }$$
Here $\mathcal{O}(SL_2(\C))/( X_{21} )$ corresponds to the function algebra on the Borel subgroup of upper triangular matrices.

    \item[Case III:] Let $I_{+}=\{\emptyset\}$ and $I_{-}=\{1\}$. We take the quotient by $(b)$ so $\mathfrak{l} =  \C h \oplus \C f$ and obtain $$\xymatrix {
    \mathcal{O}(SL_2(\C)) \ar@{^{(}->}[r]^{\iota} \ar@{->>}[d]^{u_L} 
    & \mathcal{O}_q(SL_2(\C)) \ar@{->>}[r]^{\pi} \ar@{->>}[d]^{v_L} & \mathfrak{u}_q(\mathfrak{sl}_2)^{\circ} \ar@{->>}[d]^{w_L} \\
    \mathcal{O}(SL_2(\C))/(X_{12}) \ar@{^{(}->}[r]^{\iota_L}  & \mathcal{O}_q(SL_2(\C))/(b) \ar@{->>}[r]^(.6){\pi_L} & \mathfrak{u}_q(\mathfrak{b}^{-})^{\circ}
    }$$
Here $\mathcal{O}(SL_2(\C))/( X_{12} )$ corresponds to the function algebra on the Borel subgroup of lower triangular matrices.

    \item[Case IV:] Let $I_{+}=\{1\}$ and $I_{-}=\{1\}$. We do not take quotients so $\mathfrak{l} = \C e \oplus \C h \oplus \C f$ and we have  $$\xymatrix {
    \mathcal{O}(SL_2(\C)) \ar@{^{(}->}[r]^{\iota} \ar@{=}[d]
    & \mathcal{O}_q(SL_2(\C)) \ar@{->>}[r]^{\pi} \ar@{=}[d] & \mathfrak{u}_q(\mathfrak{sl}_2)^{\circ} \ar@{=}[d] \\
    \mathcal{O}(SL_2(\C)) \ar@{^{(}->}[r]^{\iota}  & \mathcal{O}_q(SL_2(\C)) \ar@{->>}[r]^{\pi} & \mathfrak{u}_q(\mathfrak{sl}_2)^{\circ}
    }$$

\end{enumerate}

For the second step, let $\sigma: \Gamma \rightarrow SL_2(\C)$  be the inclusion of the subgroup $\Gamma$ in $SL_2(\C)$ such that $\sigma(\Gamma) \subseteq L$ and $\sigma^t : \mathcal{O}(L) \rightarrow \mathcal{O}(\Gamma)$. Then, by Proposition \ref{prop2mu} the quotients of $\mathcal{O}_q(L)$ fits in an exact sequence and we have the following commutative diagram

$$\xymatrix {
    \mathcal{O}(SL_2(\C)) \ar@{^{(}->}[r]^{\iota} \ar@{->>}[d]^{u_L} 
    & \mathcal{O}_q(SL_2(\C)) \ar@{->>}[r]^{\pi} \ar@{->>}[d]^{v_L} & \mathfrak{u}_q(\mathfrak{sl}_2)^{\circ} \ar@{->>}[d]^{w_L} \\
    \mathcal{O}(L) \ar@{^{(}->}[r]^{\iota_L} \ar@{->>}[d]^{\sigma^t}  & \mathcal{O}_q(L) \ar@{->>}[r]^{\pi_L} \ar@{->>}[d]^{s} & \mathfrak{u}_q(\mathfrak{l})^{\circ} \ar@{=}[d] \\
    \Oc(\Gamma) \ar@{^{(}->}[r]^{\widehat{\iota}} & \mathcal{O}_q(L)/(\mathcal{J}) \ar@{->>}[r]^{\widehat{\pi}} & \mathfrak{u}_q(\mathfrak{l})^{\circ}
    }$$

where $\mathcal{J}=\ker \sigma^t$ and $\Oc(\Gamma) \cong \mathcal{O}(L)/\mathcal{J}$.


\begin{example}
Let $L$ be the Borel subgroup of upper triangular matrices, that appears in Case II. Let $G_a$ be the additive group on $\C$ and consider the inclusion 
    $\sigma:G_{a} \to SL_2(\C)$ defined by 
    \[
    \sigma(X)= \begin{pmatrix}
        1 & X\\
        0 & 1
        \end{pmatrix}
    \] for all $X \in G_a$. Then we have that $\mathcal{O}_q(SL_2(\C))/\langle a^{\ell}-1, d^{\ell}-1, b^{\ell}, c\rangle$ is isomorphic to the Taft algebra $T_{q^2}$ and the following diagram is commutative:

\begin{small}
    $$\xymatrix {
    \mathcal{O}(SL_2(\C)) \ar@{^{(}->}[r] \ar@{->>}[d] 
    & \mathcal{O}_q(SL_2(\C)) \ar@{->>}[r] \ar@{->>}[d] & \mathfrak{u}_q(\mathfrak{sl}_2)^{\circ} \ar@{->>}[d] \\
    \mathcal{O}(SL_2(\C))/( X_{21}) \ar@{^{(}->}[r] \ar@{->>}[d]^{\sigma^t}  & \mathcal{O}_q(SL_2(\C))/(c) \ar@{->>}[r] \ar@{->>}[d] & \mathcal{O}_q(SL_2(\C))/\langle a^{\ell}-1, d^{\ell}-1, b^{\ell}, c\rangle \ar@{=}[d] \\
    \Oc(G_{a}) \ar@{^{(}->}[r] & \mathcal{O}_q(SL_2(\C))/\langle a^{\ell}-1, d^{\ell}-1, c^{\ell}\rangle \ar@{->>}[r] & \mathcal{O}_q(SL_2(\C))/\langle a^{\ell}-1, d^{\ell}-1, b^{\ell}, c\rangle
    }$$
\end{small}
\end{example}

For the third step, we have to consider the subgroups $N\subseteq (\mathbb{Z}_{\ell})^s$. When $s=0$, that is, cases II, III and IV, the only subgroup is $N=\{0\}$. 

In case I, $s=1$ and  the subgroups of $\Z_{\ell}$ are $N=\Z_t$ for some $t$ divisor of ${\ell}$. As we take the quotients by $(b,c)$, the element $\bar{a} \in \mathcal{O}_q(SL_2(\C))/(b,c)$ is grouplike. So, $N = \Z_t = (p)$ where $\ell = p t$. Now $\delta:\Z_t \to \widehat{\Gamma}  $ sends $p$ to some character of $\Gamma$. One may take the quotient of $\Oc_q(SL_2(\C))/(b,c)$ by $J_{\delta}=(\bar{a}^p - \delta(p))$ and get another quantum subgroup. To understand it better, let us depict this with an example.

\begin{example}
Let $G_m$ be the multiplicative group on $\C^{\times}$ and $G_k=\{g \in \C \  | \ g^k=1\}=\langle \omega\rangle \subset G_m$. Consider the monomorphisms  $\sigma_{1} :G_m \hookrightarrow SL_2(\C)$ given by $y \mapsto \begin{pmatrix}
        y & 0 \\
        0 & y^{-1}
        \end{pmatrix}$ and $\sigma_{2} :G_k \hookrightarrow SL_2(\C)$ given by $\omega \mapsto \begin{pmatrix}
        \omega & 0 \\
        0 & \omega^{-1}
        \end{pmatrix}$. Let $\chi_i(g):=\omega^i$ and denote $\chi=\chi_{1}$. We have that $\Oc(G_k)= \C \langle \chi \rangle \cong \C^{\Z_k}$. Moreover, 

\begin{multicols}{2}
\begin{align*}
     \Oc(SL_2(\C)) & \stackrel{\sigma_{1}^t}{\twoheadrightarrow} \Oc(G_m)=\C[X,X^{-1}] \qquad  \text{and}   \\
     X_{11} & \to X  \\
     X_{22} & \to X^{-1} \\
     X_{12} & \to 0 \\
     X_{21} & \to 0
\end{align*}
 
\begin{align*}
      \ \ \Oc(SL_2(\C)) & \stackrel{\sigma_{2}^t}{\twoheadrightarrow} \Oc(G_k) \cong \C \langle \chi | \chi^k=\varepsilon \rangle \cong \C^{\Z_k} \\
     X_{11} & \to \chi \\
     X_{22} & \to \chi^{-1} \\
     X_{12} & \to 0 \\
     X_{21} & \to 0
\end{align*}
\end{multicols}

We have the following commutative diagrams, finding different quotients of $\Oc_q(SL_2(\C))$ in each case.

    $$\xymatrix {
    \mathcal{O}(SL_2(\C)) \ar@{^{(}->}[r] \ar@{->>}[d] 
    & \mathcal{O}_q(SL_2(\C)) \ar@{->>}[r] \ar@{->>}[d] & \mathfrak{u}_q(\mathfrak{sl}_2)^{\circ} \ar@{->>}[d] \\
    \mathcal{O}(SL_2(\C))/(X_{12}, X_{21}) \ar@{^{(}->}[r] \ar@{->>}[d]^{\sigma_{1}^t}  & \mathcal{O}_q(SL_2(\C))/(b,c) \ar@{->>}[r] \ar@{->>}[d] & \mathcal{O}_q(SL_2(\C))/\langle a^{\ell}-1, d^{\ell}-1, b, c\rangle \ar@{=}[d] \\
    \Oc(G_{m}) \ar@{^{(}->}[r] & A_{\mathcal{D}} \ar@{->>}[r] & \C \Z_{\ell}
    }$$
where $A_{\mathcal{D}}=\C \langle a,b,c,d, X \ | \ b=c=0, ad=1, a^{\ell}=X \rangle$.
    $$\xymatrix {
    \mathcal{O}(SL_2(\C)) \ar@{^{(}->}[r] \ar@{->>}[d] 
    & \mathcal{O}_q(SL_2(\C)) \ar@{->>}[r] \ar@{->>}[d] & \mathfrak{u}_q(\mathfrak{sl}_2)^{\circ} \ar@{->>}[d] \\
    \mathcal{O}(SL_2(\C))/(X_{12}, X_{21}) \ar@{^{(}->}[r] \ar@{->>}[d]^{\sigma_{2}^t}  & \mathcal{O}_q(SL_2(\C))/(b, c) \ar@{->>}[r] \ar@{->>}[d] & \Oc_q(SL_2(\C))/\langle a^{\ell}-1, d^{\ell}-1, b, c\rangle \ar@{=}[d] \\
    \Oc(\Z_k) \ar@{^{(}->}[r] & A_{\mathcal{D}} \ar@{->>}[r] & \C \Z_{\ell}
    }$$
where $A_{\mathcal{D}}=\C \langle a,b,c,d, \chi \ | \ b=c=0, ad=1, a^{\ell}=\chi, \chi^k=1 \rangle$. 

Now  consider $\delta:\Z_t \to \widehat{\Z_k} \cong \langle \chi \rangle  $ such that $\delta(p)=\chi^r$. We have one more step.
    $$\xymatrix {
    \mathcal{O}(SL_2(\C)) \ar@{^{(}->}[r] \ar@{->>}[d] 
    & \mathcal{O}_q(SL_2(\C)) \ar@{->>}[r] \ar@{->>}[d] & \mathfrak{u}_q(\mathfrak{sl}_2)^{\circ} \ar@{->>}[d] \\
    \mathcal{O}(SL_2(\C))/(X_{12},X_{21}) \ar@{^{(}->}[r] \ar@{->>}[d]^{\sigma_{2}^t}  & \mathcal{O}_q(SL_2(\C))/(b,c) \ar@{->>}[r] \ar@{->>}[d] & \mathcal{O}_q(SL_2(\C))/\langle a^{\ell}-1, d^{\ell}-1, b, c\rangle \ar@{=}[d] \\
    \Oc(\Z_k) \ar@{^{(}->}[r] \ar@{=}[d]  & A_{\mathcal{D}} \ar@{->>}[r] \ar@{->>}[d] & \C\Z_{\ell} \ar@{->>}[d] \\
    \Oc(\Z_k) \ar@{^{(}->}[r]   & \overline{A_{\mathcal{D}}} \ar@{->>}[r]  & \C\Z_{\ell}/\C\Z_{t}}$$
\end{example}
\noindent where $\overline{A_{\mathcal{D}}}=\C \langle a,b,c,d, ,\chi \ | \ b=c=0, ad=1, a^{\ell}=\chi, \chi^k=1 ,\chi^r=a^{p} \text{ with } \ell=p t, rt\equiv 1 \mod k \rangle$. This last relation is due to 
\begin{center}
    $\chi^r = a^{p} \longrightarrow  \chi ^{rt}= a^{p t} = a^{\ell} = \chi  \longrightarrow rt \equiv 1 \mod k$.
\end{center}

\subsection{Determination of quantum subgroups.} \label{determinationodd}

In this section we prove that this construction is comprehensive: every quotient of Hopf algebras $\rho : \Oc_q(SL_2(\C)) \to A$ is determined by some odd subgroup data. We will follow \cite[Section 3]{AG}.

Given $\rho : \Oc_q(SL_2(\C)) \to A$ an epimorphism, we know that $\Oc_q(SL_2(\C))$ fits into an exact sequence. By  Proposition \ref{prop1mu}, $A$ also fits into an exact sequence
 $K  \hookrightarrow A \twoheadrightarrow H$.

As $K$ is commutative, there exists a group $\Gamma$ and a injective map $\sigma:\Gamma \to SL_2(\C)$ such that $K \cong \Oc(\Gamma)$. Also, by \cite[(1.12)]{AG} there exists an epimorphism $r: \mathfrak{u}_q(\mathfrak{sl}_2)^{\circ} \to H$ and $H ^{\circ}$ is determined by a triple $(\Sigma, I_{+},I_{-})$, where $\C \Sigma \subseteq H^{\circ} \subseteq \mathfrak{u}_q(\mathfrak{sl}_2)$. We have the following commutative diagram

$$\xymatrix {
    \mathcal{O}(SL_2(\C)) \ar@{^{(}->}[r]^{\iota} \ar@{->>}[d]^{\sigma^t} 
    & \mathcal{O}_q(SL_2(\C)) \ar@{->>}[r]^{\pi} \ar@{->>}[d]^{\rho} & \mathfrak{u}_q(\mathfrak{sl}_2)^{\circ} \ar@{->>}[d]^{r} \\
    K \ar@{^{(}->}[r]^{\bar{\iota}}  & A \ar@{->>}[r]^{\bar{\pi}} & H
    }$$

 We want to see that $A=A_{\mathcal{D}}$ for some odd data subgroup $(I_{+}, I_{-}, N, \Gamma, \sigma, \delta)$. Consider the subalgebra $\mathfrak{u}_q(\mathfrak{l})$ where $\mathfrak{l} = \mathfrak{l}_{+} \oplus \C h \oplus \mathfrak{l}_{-}$ as in Remark \ref{defl}. As $\Sigma \subseteq T$, by \cite[Lemma 3.1]{AG} we have that $H^{\circ} \subseteq \mathfrak{u}_q(\mathfrak{l}) $  so $w:\mathfrak{u}_q(\mathfrak{l})^{\circ} \to H$ is an epimorphism and we have the commutative diagram

$$\xymatrix{
    \mathcal{O}(SL_2(\C)) \ar@{^{(}->}[r]^{\iota} \ar@{->>}[d]^{u_L} 
    & \mathcal{O}_q(SL_2(\C)) \ar@{->>}[r]^{\pi} \ar@{->>}[d]^{v_L} & 
    \mathfrak{u}_q(\mathfrak{sl}_2)^{\circ} \ar@{->>}[d]^{w_L} \\
    \mathcal{O}(L) \ar@{^{(}->}[r]^{\iota_L} \ar@{->>}[d]^{u}  & \mathcal{O}_q(L) \ar@{->>}[r]^{\pi_L} \ar@{->>}[d]^{v} & \mathfrak{u}_q(\mathfrak{l})^{\circ} \ar@{->>}[d]^{w} \\
    \Oc(\Gamma) \ar@{^{(}->}[r]^{\widehat{\iota}} & A \ar@{->>}[r]^{\widehat{\pi}} & H
    }$$
    
Moreover by \cite[Lemma 3.2]{AG}, $\sigma(\Gamma) \subseteq L$ and $A$ is a quotient of $A_{q,\mathfrak{l},\sigma}$ given by the pushout. Thus, we have the commutative diagram 

$$\xymatrix {
    \mathcal{O}(SL_2(\C)) \ar@{^{(}->}[r]^{\iota} \ar@{->>}[d]^{u_L} 
    & \mathcal{O}_q(SL_2(\C)) \ar@{->>}[r]^{\pi} \ar@{->>}[d]^{v_L} & \mathfrak{u}_q(\mathfrak{sl}_2)^{\circ} \ar@{->>}[d]^{w_L} \\
    \mathcal{O}(L) \ar@{^{(}->}[r]^{\iota_L} \ar@{->>}[d]^{u}  & \mathcal{O}_q(L) \ar@{->>}[r]^{\pi_L} \ar@{->>}[d]^{s} & \mathfrak{u}_q(\mathfrak{l})^{\circ} \ar@{=}[d] \\
    \Oc(\Gamma) \ar@{^{(}->}[r]^{\widehat{\iota}} \ar@{=}[d] & A_{q,\mathfrak{l},\sigma} \ar@{->>}[r]^{\widehat{\pi}} \ar@{->>}[d]^{t} & \mathfrak{u}_q(\mathfrak{l})^{\circ} \ar@{->>}[d]^{w} \\
    \Oc(\Gamma) \ar@{^{(}->}[r]^{\bar{\iota}} & A \ar@{->>}[r]^{\bar{\pi}} & H
    }$$
In the case $H^{\circ}= \mathfrak{u}_{q}(\mathfrak{sl}_2), H^{\circ}= \mathfrak{u}_{q}(\mathfrak{b}^{+}), H^{\circ}= \mathfrak{u}_{q}(\mathfrak{b}^{-})$ or $H^{\circ}= \mathfrak{u}_{q}(\mathfrak{h})$, the map $w$ is an isomorphism. If $H^{\circ}\subset \mathfrak{u}_{q}(\mathfrak{h})$, let $\C N \cong \ker w$ with $N$ a subgroup of $\Z_{\ell}$, so $N=(p) = \Z_t$ for some $t$. By \cite[Lemma 3.3]{AG}, there exists $\delta:N \to \widehat{\Gamma}$ such that $A \cong A_{q,\mathfrak{l},\sigma}/J_{\delta}$.

\subsection{Quantum subgroups of $\Oc_{-1}(SL_{2}(\C))$} \label{section-1}

Recall that  $\Oc_{-1}(SL_2(\C))$ fits in the exact sequence

        \begin{center}
        $\mathcal{O}(PSL_2(\C)) \hookrightarrow \mathcal{O}_{-1}(SL_2(\C)) \twoheadrightarrow \C \Z_2 $.
        \end{center}






In this case, the quotients of $\Oc_{-1}(SL_2(\C))$ are determined by the subgroups of $PSL_2(\C)$, as we see in the following result.

\begin{thm} \label{teo-1}

Let $A$ be a  Hopf algebra quotient of $\Oc_{-1}(SL_2(\C))$. Then either
    \begin{enumerate}
        \item [(i)] $A$ is isomorphic to a quotient  
        $\mathcal{O}_{-1}(SL_2(\C))/(\mathcal{J})$ where $\mathcal{J}=\ker\sigma^t$ and $\sigma: \Gamma \hookrightarrow PSL_2(\C)$ is a monomorphism of algebraic groups, or
        \item [(ii)] $A$ is isomorphic to 
        a function algebra over some dihedral group $D_{2m}$.
        \end{enumerate}


    
\end{thm}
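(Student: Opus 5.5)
Let $\rho:\Oc_{-1}(SL_2(\C))\twoheadrightarrow A$ be a Hopf algebra epimorphism. We follow the three-row-diagram strategy of Section \ref{section3}. Write $B\cong\Oc(PSL_2(\C))$ for the normal commutative Hopf subalgebra and use the exact sequence $B\hookrightarrow \Oc_{-1}(SL_2(\C))\twoheadrightarrow \C\Z_2$ of Theorem \ref{teoremase}(ii). Proposition \ref{prop1mu} gives an exact sequence
$$K:=B/(B\cap \ker\rho)\hookrightarrow A\twoheadrightarrow \overline{H}=\C\Z_2/\pi(\ker\rho).$$
Since $K$ is a commutative quotient Hopf algebra of $\Oc(PSL_2(\C))$, we have $K\cong\Oc(\Gamma)$ for a closed subgroup $\Gamma$, with $\sigma:\Gamma\hookrightarrow PSL_2(\C)$ and $\mathcal{J}:=B\cap\ker\rho=\ker\sigma^t$. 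The only Hopf quotients of $\C\Z_2$ are $\C\Z_2$ itself and $\C$, so we split into these two cases.

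\emph{Case $\overline{H}=\C\Z_2$.} By Proposition \ref{prop2mu}, $\Oc_{-1}(SL_2(\C))/(\mathcal{J})$ is the pushout. It fits into an exact sequence $\Oc(\Gamma)\hookrightarrow \Oc_{-1}(SL_2(\C))/(\mathcal{J})\twoheadrightarrow\C\Z_2$, and $\rho$ factors through it because $(\mathcal{J})\subseteq\ker\rho$. This yields a commutative diagram of the form in Proposition \ref{propiso}, with identities on the outer terms. Since $\dim\C\Z_2=2<\infty$ and the source is a quotient of the noetherian algebra $\Oc_{-1}(SL_2(\C))$, the argument of Proposition \ref{propiso} (faithful flatness over the commutative $\Oc(\Gamma)$ by Proposition \ref{ff}, together with \cite[(1.15)]{AG}) shows that the induced map $\Oc_{-1}(SL_2(\C))/(\mathcal{J})\to A$ is an isomorphism. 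This gives (i).

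\emph{Case $\overline{H}=\C$.} By Remark \ref{prop4mu}, $A\cong K\cong\Oc(\Gamma)$ is commutative. Hence $\rho$ factors through the abelianization of $\Oc_{-1}(SL_2(\C))$. Imposing commutativity on the relations $ab=-ba$, $ac=-ca$, $bd=-db$, $cd=-dc$ forces $ab=ac=bd=cd=0$, while $bc=cb$, $ad=da$ and $ad+bc=1$ remain. The points of this commutative Hopf algebra are the matrices $\begin{pmatrix} a&b\\ c&d\end{pmatrix}$ with $ab=ac=bd=cd=0$ and $ad+bc=1$. These are exactly the diagonal matrices $\operatorname{diag}(t,t^{-1})$ and the antidiagonal matrices $\begin{pmatrix}0&s\\ -s^{-1}&0\end{pmatrix}$ (note the sign, since $\det_{-1}=ad+bc$), under the ordinary matrix product coming from $\Delta$. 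So the abelianization is $\Oc(G)$, where $G$ is the normalizer of the torus in $SL_2(\C)$, i.e.\ an infinite dihedral-type group $\C^\times\rtimes\Z_2$. It should be checked that this algebra is reduced; this holds because the relations cut out a smooth variety, namely two disjoint copies of $\C^\times$. Commutative Hopf quotients of $\Oc(G)$ are then $\Oc(\Gamma')$ for closed subgroups $\Gamma'\subseteq G$. Finally we show that if $\Gamma'$ is not contained in the torus (otherwise $\overline{H}$ would be $\C\Z_2$ rather than $\C$, and we are in case (i)), then either $\Gamma'=G$, which is again a case (i) quotient, or $\Gamma'$ is finite, generated by a cyclic subgroup of the torus and an antidiagonal element, hence a dihedral (binary dihedral) group $D_{2m}$. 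This gives (ii).

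The main obstacle is the bookkeeping in this second case. One must verify that the commutative quotients whose image of $B$ already exhausts $A$ are exactly function algebras on dihedral groups. In particular, one must make sure that the subgroups of $G$ meeting the antidiagonal component with finite intersection are dihedral, and that those with $\Gamma'\cap T$ infinite are already covered by (i). I would handle this by using the explicit description of $G$ and the image of $\Gamma'$ in $PSL_2(\C)$ under $\sigma$.
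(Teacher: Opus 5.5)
Your treatment of the case $\overline H=\C\Z_2$ is the paper's argument verbatim: the pushout from Proposition \ref{prop2mu}, followed by Proposition \ref{propiso}. In the case $\overline H=\C$, your computation of the points of the abelianization is the same as the paper's description of $\Alg(A,\C)$.

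Two things go wrong in that second case. The first is a sign error. With $a=d=0$ the relation $ad+bc=1$ gives $bc=1$, so the antidiagonal points are $\begin{pmatrix}0&s\\ s^{-1}&0\end{pmatrix}$, and these square to $I$. Hence $G$ is $O_2(\C)$, the split extension $\C^\times\rtimes\Z_2$; this is the paper's ``$\beta$ is an involution''. It is not the normalizer of the torus in $SL_2(\C)$. In that normalizer every antidiagonal element squares to $-I$, and the finite non-toral subgroups are binary dihedral, which are not dihedral. So your hedge ``dihedral (binary dihedral)'' can only be settled in favour of dihedral, and only after correcting $G$.

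The second problem is the real gap. Your dismissal ``if $\Gamma'\subseteq T$ then $\overline H=\C\Z_2$'' is false. In $\Oc(G)$ the image of $B$ is generated by $a^2,b^2,c^2,d^2,bc$, so it is exactly the algebra of functions invariant under $g\mapsto -g$. Hence $K\cong\Oc(\Gamma'/(\Gamma'\cap\{\pm I\}))$, and $\overline H=\C$ exactly when $-I\notin\Gamma'$.

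This includes $\Gamma'=\mu_n\subset T$ with $n$ odd. The assignment $a\mapsto t$, $d\mapsto t^{-1}$, $b,c\mapsto 0$ defines a Hopf quotient $\Oc_{-1}(SL_2(\C))\twoheadrightarrow\C[t]/(t^n-1)\cong\Oc(\Z_n)$, and $t=(t^2)^{(n+1)/2}\in K$. For $n$ odd, including $n=1$ (the counit), this quotient is odd-dimensional. A quotient of type (i), however, is either infinite-dimensional or of dimension $2|\Gamma|$ by the exact sequence of Proposition \ref{prop2mu}. Also $\dim\Oc(D_{2m})=2m$ is even.

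So no argument can close this step, because the conclusion fails there. With the sign fixed, your method actually proves the following for $\overline H=\C$: $A\cong\Oc(\Gamma')$ with $\Gamma'$ a closed subgroup of $O_2(\C)$ not containing $-I$. Such a $\Gamma'$ is either cyclic of odd order or dihedral of order $2n$ with $n$ odd, and statement (ii) has to be enlarged accordingly.

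The paper's own proof has the same hole. Its assertion that $\Gamma$ is dihedral tacitly assumes that a character of type $\beta$ exists, and none exists when $\rho(b)=\rho(c)=0$. Separately, $\Gamma'=G$ cannot occur when $\overline H=\C$, since $-I\in G$.
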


\begin{proof} 
    The two possibilities for the quotients are related to the possible subgroups of $\C\Z_2$.
    Let $\rho: \Oc_{-1}(SL_2(\C)) \to A$ be a Hopf algebra quotient. By Proposition \ref{prop1mu}, $A$ is contained in an exact sequence of Hopf algebras $B  \hookrightarrow A \twoheadrightarrow H$, and the following diagram is commutative

$$\xymatrix {
    \mathcal{O}(PSL_2(\C)) \ar@{^{(}->}[r]^{\iota} \ar@{->>}[d]^{p} 
    & \mathcal{O}_{-1}(SL_2(\C)) \ar@{->>}[r]^(.6){\pi} \ar@{->>}[d]^{\rho} & \C\Z_2 \ar[d]^{r} \\
    B \ar@{^{(}->}[r]^{\bar{\iota}}  & A \ar@{->>}[r]^{\bar{\pi}} & H 
    }$$ 

 \noindent   where $H$ is a quotient of $\C \Z_2$ and $B \cong \Oc(\Gamma)$ for some $\Gamma $ algebraic subgroup of $ PSL_2(\C)$. Write $\sigma: \Gamma \hookrightarrow PSL_2(\C)$ for the monomorphism of algebraic groups. Then $\sigma^t: \Oc(PSL_2(\C)) \twoheadrightarrow \Oc(\Gamma)$ is an epimorphism of Hopf algebras. 
    
    If $H=\C\Z_2$, by Proposition \ref{prop2mu}, $\Oc_{-1}(SL_2(\C))/(\ker \sigma^t)$ is contained in an exact sequence of Hopf algebras

$$\Oc(\Gamma)  \hookrightarrow \Oc_{-1}(SL_2(\C))/(\ker \sigma^t) \twoheadrightarrow \C \Z_2/\pi(\ker \sigma^t) \cong \C \Z_2 $$

and the following diagram is commutative

$$\xymatrix {
    \mathcal{O}(PSL_2(\C)) \ar@{^{(}->}[r]^{\iota} \ar@{->>}[d]^{\sigma^t} 
    & \mathcal{O}_{-1}(SL_2(\C)) \ar@{->>}[r]^{\pi} \ar@{->>}[d] & \C\Z_2 \ar@{=}[d] \\
    \mathcal{O}(\Gamma) \ar@{^{(}->}[r]^(0.3){\bar{\iota}}  & \mathcal{O}_{-1}(SL_2(\C))/(\text{ker } \sigma^t) \ar@{->>}[r]^(0.7){\bar{\pi}} & \C \Z_2 
    }$$

We want to prove that $A \cong \mathcal{O}_{-1}(SL_2(\C))/(\text{ker } \sigma^t) $. Since $\mathcal{O}_{-1}(SL_2(\C))/(\text{ker } \sigma^t)$ is a pushout, then there exists $\psi: \mathcal{O}_{-1}(SL_2(\C))/(\text{ker } \sigma^t) \to A$ an epimorphism of Hopf algebras and we have the commutative diagram

$$\xymatrix {
    \mathcal{O}(\Gamma) \ar@{^{(}->}[r]^(0.3){\bar{\iota}} \ar@{=}[d] 
    & \mathcal{O}_{-1}(SL_2(\C))/(\text{ker } \sigma^t) \ar@{->>}[r]^(0.7){\bar{\pi}} \ar@{->>}[d]^{\psi} & \C\Z_2 \ar@{=}[d] \\
    \Oc(\Gamma) \ar@{^{(}->}[r]^{\widehat{\iota}}  & A \ar@{->>}[r]^{\widehat{\pi}} & \C \Z_2 
    }$$ 

Hence, by Proposition \ref{propiso}, $\psi$ is an isomorphism.

\bigbreak
If $H=\C$, we have that the epimorphism of Hopf algebras $r:\C \Z_2 \to \C$
is given by the counit $\varepsilon:\C \Z_2 \to \C$  and the following diagram is commutative

$$\xymatrix {
    \mathcal{O}(PSL_2(\C)) \ar@{^{(}->}[r]^{\iota} \ar@{->>}[d]^{\sigma^t} 
    & \mathcal{O}_{-1}(SL_2(\C)) \ar@{->>}[r]^{\pi} \ar@{->>}[d]^{\rho} & \C \Z_2 \ar@{->>}[d]^{\varepsilon} \\
    \Oc(\Gamma) \ar@{^{(}->}[r]^{\widehat{\iota}} & A \ar@{->>}[r]^{\widehat{\pi}} & \C
    }$$
 Thus, by Remark \ref{prop4mu} it follows that $A \cong \Oc(\Gamma)$. Let us see that $\Gamma \cong D_{2m}$ for some dihedral group $D_{2m}$.

 As $A \cong \Oc(\Gamma)$, then $A$ is commutative. If we denote by $\bar{a}, \bar{b}, \bar{c}, \bar{d}$ the elements in $A$, we have that $\bar{a} \bar{b} = \bar{b}\bar{a}$ and $\bar{a} \bar{b} = - \bar{b}\bar{a}$ so $\bar{a} \bar{b} = 0$. In the same way, $\bar{a} \bar{c} = \bar{b} \bar{d} = \bar{c} \bar{d} = 0$. 
 In order to determine $\Gamma$, we analyze the image of the spectrum functor $\Alg(A,\C)$.  
 For any $\alpha: A \to \C$ algebra morphism, we have

\begin{center} 
$\alpha(\bar{a}\bar{c})=\alpha(\bar{a})\alpha(\bar{c})=0$ \\
$\alpha(\bar{a}\bar{b})=\alpha(\bar{a})\alpha(\bar{b})=0$ \\
$\alpha(\bar{a}\bar{d})=\alpha(\bar{a})\alpha(\bar{d})$ \\
$\alpha(\bar{a}\bar{d} + \bar{b}\bar{c})= \alpha(\bar{a}\bar{d}) + \alpha (\bar{b}\bar{c}) =1 $ 
\end{center}

So, if $\alpha(\bar{a}) \neq0$ then $\alpha(\bar{b})=\alpha(\bar{c})=0$ and $\alpha(\bar{d})=\alpha(\bar{a})^{-1}$. And if $\alpha(\bar{a}) =0$ then $\alpha(\bar{d})=0$ and $\alpha(\bar{c})=\alpha(\bar{b})^{-1}$. These conditions determine 
uniquely two algebra maps 
$\alpha,\beta \in \Alg(A,\C)$

\begin{center}
    $\alpha: \bar{a} \to B  \hspace{1.8cm} \beta: \bar{a} \to 0$ \\
$\quad \ \ \bar{b} \to 0  \hspace{1.8cm} \quad \ \ \  \bar{b} \to C$ \\
$\quad \quad \ \ \bar{c} \to 0  \hspace{1.8cm} \quad \ \ \  \bar{c} \to C^{-1}$ \\
$\quad  \ \bar{d} \to B^{-1}  \hspace{1.8cm}   \   \bar{d} \to 0$ \\
\end{center}

\noindent for some $B,C \in \C^{\times}$.

Let $\langle\alpha, \beta \rangle$ be the group generated by these maps. By the computation above,
it follows that $\langle \alpha, \beta \rangle \simeq \Alg(A,\C)=\Gamma$. Moreover, since  
$\beta$ is an involution, then $\Gamma$ is a quotient of the  semidirect product $\C^{*} \rtimes \Z_2$ and  

\begin{center}
    $\C^{*} \rtimes \Z_2 \twoheadrightarrow \Gamma \hookrightarrow PSL_2(\C)$;
\end{center}
here we are considering the action 
of $\beta$ on $\alpha$ given by
the conjugation $\beta\cdot \alpha=\beta\alpha\beta^{-1}$. 
Since the only subgroups of $PSL_2(\C)$ that are quotients of $\C^{*} \rtimes \Z_2$
are the (finite) dihedral groups, it follows that $\Gamma \cong D_{2m}$ for some $m\in \N$.



\end{proof}

\begin{remark}
    It follows directly that the quotients in (i) are quantum subgroups. Also, given a dihedral group $D_{2m}$ generated by an element of order $m$ and an element of order $2$, we can define the morphisms $\alpha, \beta$ just taking $B$ as the generator of order $m$ and $C$ as the generator of order $2$. This means that for any dihedral group $D_{2m}$ we have the Hopf algebra quotient of $\Oc_{-1}(SL_2(\C))\twoheadrightarrow \Oc(D_{2m}) =\C^{D_{2m}}$.
\end{remark}

To give an explicit definition for the quotients in $(i)$ above we have to compute $\text{ker } \sigma^t$. We show how to do this in the following example.

\begin{example} \label{zn} 

Consider $\tilde{\Gamma}$ a subgroup of $SL_2(\C)$ and let $p: \tilde{\Gamma} \twoheadrightarrow \Gamma, \pi: SL_2(\C) \twoheadrightarrow PSL_2(\C)$ be the corresponding epimorphisms. So we have

    $$\xymatrix {
    \Oc(\tilde{\Gamma})   
    & \Oc(SL_2(\C)) \ar@{->>}[l]^{i^t}  \\
    \Oc(\Gamma) \ar@{^{(}->}[u]^{p^{*}}   & \Oc(PSL_2(\C)) \ar@{->>}[l]^{\sigma^t} \ar@{^{(}->}[u]^{\pi^{*}} 
    }$$

Let $\Gamma=\Z_n= \langle g \rangle$, $\tilde{\Gamma}=\Z_{2n}= \langle h \rangle$ and $\omega$ a root of unity of order $2n$. Consider the monomorphism $\iota: \Z_{2n} \hookrightarrow SL_2(\C)$ given by $h \mapsto \begin{pmatrix}
        \omega & 0 \\
        0 & \omega^{-1}
        \end{pmatrix}$ and the following diagram
    $$\xymatrix {
    \Z_{2n} \ar@{^{(}->}[r]^{\iota} \ar@{->>}[d]^{P_\Z} 
    & SL_2(\C) \ar@{->>}[d]^{\pi} \\
    \Z_n \ar@{^{(}->}[r]^{\sigma}  & PSL_2(\C) 
    }$$

where $P_{\Z}= \pi \circ \iota$ and $g=P_{\Z}(h)$; hence, $Ker(P_\Z)=\langle h^n\rangle.$ Dualizing we obtain   

    $$\xymatrix {
    \C^{\Z_{2n}}   
    & \Oc(SL_2(\C)) \ar@{->>}[l]^{\iota^{t}}  \\
    \C^{\Z_n} \ar@{^{(}->}[u]^{P_{\Z}^{t}}   & \Oc(PSL_2(\C)) \ar@{->>}[l]^{\sigma^t} \ar@{^{(}->}[u]^{\pi^{t}} 
    }$$

The commutative diagram giving the corresponding quantum subgroup is

$$\xymatrix {
    \mathcal{O}(PSL_2(\C)) \ar@{^{(}->}[r]^{\iota} \ar@{->>}[d]^{\sigma^{t}} 
    & \mathcal{O}_{-1}(SL_2(\C)) \ar@{->>}[r]^{\pi} \ar@{->>}[d]^{\rho} & \C\Z_2 \ar@{=}[d] \\
    \C^{\Z_n} \ar@{^{(}->}[r]^(0.3){\bar{\iota}}  & \mathcal{O}_{-1}(SL_2(\C))/(\ker \sigma ^{t}) \ar@{->>}[r]^(0.7){\bar{\pi}} & \C\Z_2
    }$$

We want to compute $\ker \sigma^t $. Suppose $\C^{\Z_n}=\langle\alpha\rangle$ where $\alpha(g)=\omega^2$, and $C^{\Z_{2n}}=\langle\chi\rangle$ where $\chi(h)=\omega$. Then $\sigma^t(X_{11}^2)(g)=X_{11}^2(\sigma(g))=\omega^2 = \alpha(g)$ and this implies $\sigma^t(X_{11}^2)=\alpha$. By the same argument, $\sigma^t(X_{22}^2)=\alpha^{-1}$, $\sigma^t(X_{12} \ \cdot)=\sigma^t(X_{21} \ \cdot)=\sigma^t(\cdot \ X_{12})=\sigma^t(\cdot \ X_{21})=0$, and $\sigma^t(X_{11}X_{22})=\varepsilon$. 

Observe that $\sigma^t(X_{11}^{2n})(g)=\alpha(g)^n=\omega^{2n}=1$ and $\sigma^t(X_{22}^{2n})(g)=\alpha^{-1}(g)^n=\omega^{-2n}=1$. So,  $\ker \sigma^t= \langle X_{11}^{2n}-1, X_{22}^{2n}-1, X_{11}X_{22}-1,  X_{ij}X_{kl}  \text{ with } i \neq j \text{ or } k \neq l \rangle$. 

However, as $ad+bc=1$ we have that $b(ad+bc)=b$ so  \\ $\rho(b)=\rho(bad) + \rho(bbc)= 0$. By the same computation, $\rho(c)=0$. This implies that $b,c \in (\text{ker } \sigma^{t})$.

Then, $\mathcal{O}_{-1}(SL_2(\C))/(\text{ker } \sigma^{t}) = \C \langle a | a^{2n}=1 \rangle \cong \C ^{\Z_{2n}} \cong \C \Z_{2n} $ and the commutative diagram above reads 

$$\xymatrix {
    \mathcal{O}(PSL_2(\C)) \ar@{^{(}->}[r]^{\iota} \ar@{->>}[d]^{\sigma^t} 
    & \mathcal{O}_{-1}(SL_2(\C)) \ar@{->>}[r]^(0.6){\pi} \ar@{->>}[d]^{\rho} & \C\Z_2 \ar@{=}[d] \\
    \C^{\Z_n} \ar@{^{(}->}[r]^{\bar{\iota}}  & \C^{\Z_{2n}} \ar@{->>}[r]^{\bar{\pi}} & \C\Z_2.
    }$$
\end{example}

To finish this section, we study when two quotients of type (i) are isomorphic. Recall Definition \ref{isoquot} for quantum sobgroup isomorphism.


\begin{prop}
Let $\sigma_i: \Gamma_i \to PSL_2(\C)$ be monomorphism of algebraic groups for $i=1,2$. Then  
$\mathcal{O}_{-1}(SL_2(\C))/(\ker \sigma_1^t)$ and $\mathcal{O}_{-1}(SL_2(\C))/(\ker \sigma_2^t)$ are isomorphic as quantum subgroups if and only if there exists $\rho:\Gamma_2 \to \Gamma_1$ isomorphism of algebraic groups such that $ \sigma_1 \circ \rho=\sigma_2$.
\end{prop}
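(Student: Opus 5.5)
Write $A_i=\mathcal{O}_{-1}(SL_2(\C))/(\ker\sigma_i^t)$ with quotient maps $\rho_i$. By Proposition \ref{prop2mu}, each $A_i$ fits into an exact sequence $\Oc(\Gamma_i)\stackrel{\bar\iota_i}{\hookrightarrow} A_i \twoheadrightarrow \C\Z_2$ with $\rho_i\circ\iota=\bar\iota_i\circ\sigma_i^t$. The strategy is to pass between isomorphisms of quotients and isomorphisms of the commutative parts $\Oc(\Gamma_i)$, and then to apply the anti-equivalence between affine algebraic groups and commutative Hopf algebras.

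\emph{Sufficiency.} Suppose $\rho:\Gamma_2\to\Gamma_1$ is an isomorphism with $\sigma_1\rho=\sigma_2$. Then $\rho^t\sigma_1^t=\sigma_2^t$ and $\rho^t$ is bijective, so $\ker\sigma_1^t=\ker\sigma_2^t$ as ideals of $\Oc(PSL_2(\C))$. Hence the generated Hopf ideals of $\mathcal{O}_{-1}(SL_2(\C))$ coincide. The identity then induces $\varphi:A_1\to A_2$ with $\varphi\rho_1=\rho_2$, as required by Definition \ref{isoquot}.

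\emph{Necessity.} Let $\varphi:A_1\to A_2$ be a Hopf isomorphism with $\varphi\rho_1=\rho_2$.
\begin{enumerate}
\item Since $\ker\varphi=0$, we get $\ker\rho_1=\ker\rho_2$, that is, $(\ker\sigma_1^t)=(\ker\sigma_2^t)$ in $\mathcal{O}_{-1}(SL_2(\C))$.
\item By exactness of the bottom rows (Proposition \ref{prop1mu}), the subalgebra $\bar\iota_i(\Oc(\Gamma_i))$ equals $\rho_i(\iota(\Oc(PSL_2(\C))))$. Its kernel inside $\Oc(PSL_2(\C))$ is $\Oc(PSL_2(\C))\cap(\ker\sigma_i^t)$.
\item I claim this intersection equals $\ker\sigma_i^t$. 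This follows because $\bar\iota_i$ is injective, so $\Oc(\Gamma_i)\cong \Oc(PSL_2(\C))/\ker\sigma_i^t$ embeds into $A_i$.
\item From steps 1 and 3, $\ker\sigma_1^t=\ker\sigma_2^t$. Therefore there is a unique Hopf algebra isomorphism $\theta:\Oc(\Gamma_1)\to\Oc(\Gamma_2)$ with $\theta\sigma_1^t=\sigma_2^t$. It is the restriction of $\varphi$ to $\Oc(\Gamma_1)$, since $\varphi\rho_1\iota=\rho_2\iota$.
\item By the anti-equivalence, $\theta=\rho^t$ for a unique isomorphism of algebraic groups $\rho:\Gamma_2\to\Gamma_1$. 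Dualizing $\rho^t\sigma_1^t=\sigma_2^t$ gives $\sigma_1\rho=\sigma_2$.
\end{enumerate}

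The main obstacle is step 3, namely that $\Oc(PSL_2(\C))\cap \mathcal{O}_{-1}(SL_2(\C))\ker\sigma^t=\ker\sigma^t$. Injectivity of $\bar\iota$ in Proposition \ref{prop2mu} delivers this. That injectivity rests on faithful flatness of $\mathcal{O}_{-1}(SL_2(\C))$ over the commutative normal subalgebra $\Oc(PSL_2(\C))$, which Proposition \ref{ff}(i) provides. If needed, I would argue it directly: faithful flatness gives $M\cap AI=I$ for an ideal $I$ of a subalgebra $M$ over which $A$ is faithfully flat.
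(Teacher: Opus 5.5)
Your proposal is correct and follows the paper's argument. Sufficiency is identical. For necessity, the paper likewise uses injectivity of $\bar\iota_2$ (your step 3) in the chase $\iota_2\sigma_2^t(\mathcal{J}_1)=\rho_2\iota(\mathcal{J}_1)=\varphi\rho_1\iota(\mathcal{J}_1)=0$, and concludes $\ker\sigma_1^t=\ker\sigma_2^t$ by symmetry. The only difference is the last step: the paper passes to equality of the images $\sigma_1(\Gamma_1)=\sigma_2(\Gamma_2)$ and sets $\rho=\sigma_1^{-1}\sigma_2$, whereas your appeal to the anti-equivalence makes explicit that $\rho$ is an isomorphism of algebraic groups.
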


\begin{proof}
    Denote $A_1=\mathcal{O}_{-1}(SL_2(\C))/(\mathcal{J}_1)$ and $A_2=\mathcal{O}_{-1}(SL_2(\C))/(\mathcal{J}_2)$ where $\mathcal{J}_1=\ker\sigma_1^t$ and $\mathcal{J}_2=\ker\sigma_2^t$. Suppose $\varphi: A_1 \to A_2$ is an isomorphism, so $\varphi \circ \rho_1=\rho_2$. We have the following commutative diagram

\begin{equation*} 
\xymatrix{
    \Oc(PSL_2(\C)) \ar@{^{(}->}[r]^{\iota} \ar@{->>}[d]^{\sigma_1^t} \ar@/_3.6pc/[dd]_{\sigma_2^t} 
    & \Oc_{-1}(SL_2(\C)) \ar@/^1.9pc/[dd]^(0.65){\rho_2} \ar@{->>}[r]^{\pi} \ar@{->>}[d]^{\rho_1}  & 
   \C \Z_2 \ar@{=}[d]  \\
    \Oc(PSL_2(\C))/\mathcal{J}_1 \ar@{^{(}->}[r]^(0.6){\iota_1} \ar@{-->}[d]^{\sigma^t}  & A_1 \ar@{->>}[r]^{\pi_1} \ar@{-->}[d]^{\varphi} & \C \Z_2 \ar@{=}[d] \\
    \Oc(PSL_2(\C))/\mathcal{J}_2 \ar@{^{(}->}[r]^(0.6){\iota_2} & A_2 \ar@{->>}[r]^{\pi_2} & \C \Z_2
    }
\end{equation*}
We have  $\mathcal{J}_1 \subseteq \ker \sigma_2^t= \mathcal{J}_2$ since $0=\varphi \iota_1\sigma_1^t(\mathcal{J}_1)= \varphi \rho_1\iota(\mathcal{J}_1) = \rho_2 \iota(\mathcal{J}_1)= \iota_2 \sigma_2^t(\mathcal{J}_1)$. Using $\varphi^{-1}$ we can see that $\mathcal{J}_2 \subseteq \mathcal{J}_1$ so $\mathcal{J}_1=\mathcal{J}_2$.

As $\mathcal{J}_1= \ker \sigma_1^t=\{ p \in \Oc(PSL_2(\C)) : p|_{\sigma_1(\Gamma_1)}=0\}$ and $\mathcal{J}_2= \ker \sigma_2^t=\{ p \in \Oc(PSL_2(\C)) : p|_{\sigma_2(\Gamma_2)}=0\}$, $\mathcal{J}_1=\mathcal{J}_2$ means that $\sigma_1(\Gamma_1)= \sigma_2(\Gamma_2)$, thus we can define $\rho: \Gamma_2 \to \Gamma_1$ such that  $\rho(\Gamma_2)=\sigma_1^{-1}( \sigma_2(\Gamma_2))= \sigma_1^{-1}(\sigma_1(\Gamma_1))= \Gamma_1$. So $\sigma_1 \rho(\Gamma_2)=\sigma_2(\Gamma_2)$ and $\sigma_1 \circ \rho = \sigma_2$. \\

On the other hand, suppose there exists an isomorphism $\rho: \Gamma_2 \to \Gamma_1$ such that $\sigma_1 \circ \rho = \sigma_2$. Then the following diagram is commutative.

$$\xymatrix {
    \Oc(\Gamma_2) 
    & \Oc(PSL_2(\C)) \ar@{->>}[l]^{\sigma_2^t}  \ar@{->>}[dl]^{\sigma_1^t} \\
    \Oc(\Gamma_1) \ar@{->}[u]^{\rho^t}
    }$$

That is, $\sigma_2^t=\rho^t \circ \sigma_1^t$. Then $\mathcal{J}_1= \ker \sigma_1^t \subseteq \ker \sigma_2^t = \mathcal{J}_2$. Using $\rho^{-1}$ we can see that $\mathcal{J}_2 \subseteq \mathcal{J}_1$ and so $\mathcal{J}_1=\mathcal{J}_2$.

Now, let us define $\varphi: \Oc_{-1}(SL_2)/(\mathcal{J}_1) \to \Oc_{-1}(SL_2)/(\mathcal{J}_2)$ such that $\varphi \circ \rho_1=\rho_2$. For $\bar{x} \in \Oc_{-1}(SL_2)/(\mathcal{J}_1)$, $\bar{x}=\rho_1(x)$ with $x \in \Oc_{-1}(SL_2)$, define $\varphi(\bar{x})=\rho_2(x)$. Then $\varphi$ is well defined since, if $x-y \in (\mathcal{J}_1)=(\mathcal{J}_2)$, then $\rho_2(x-y)=0$, that is, $\rho_2(x)=\rho_2(y)$. Finally, $\varphi \circ \rho_1=\rho_2$ by definition.

\end{proof}

\begin{remark}
    In \cite[Theorem 5.19]{Bichon}, Bichon and Natale found all the noncommutative finite-dimensional Hopf algebra quotients of $\Oc_{-1}(SL_2(\C))$, that are in particular semisimple and cosemisimple. Here we extend this result by adding the infinite-dimensional ones.
\end{remark}
 
\subsection{Quantum subgroups of $\Oc_{q}(SL_{2}(\C))$, ord($q$) even} \label{sectioneven} As we have seen in Theorem \ref{teoremase}, if ord($q)=\ell=2m$ ($m \neq 1$), then $\Oc_q(SL_2(\C))$ fits in the exact sequence

    \bigskip
    
    \begin{center}
    $\mathcal{O}(PSL_2(\C)) \hookrightarrow \mathcal{O}_q(SL_2(\C)) \twoheadrightarrow \mathfrak{u}_{2,q}(\mathfrak{sl}_2)^{\circ} $.
    \end{center}

This case has been studied in \cite{Bichon} and \cite{Chelsea}, for finite dimensional quantum subgroups. Here we present all the quantum subgroups, and show an example in which we can determine the quantum subgroup explicitly via generators and relations.

\begin{defi}\label{def:subgroup-data-qeven}
    Let $q$ be a root of unity of even order $\ell\neq 2$. An even subgroup data is a collection $\mathcal{D} = (I_{+}, I_{-}, N, \Gamma, \sigma, \delta)$ such that:

\begin{itemize}
    \item $I_{+}, I_{-} \in \{\emptyset, \{1\}\}$. 
    
    \item $N$ is a subgroup of $(\mathbb{Z}_{\ell})^{s}$, where $s = 1 - |I_{+} \cup I_{-}|$.
    
    \item $\Gamma$ is an algebraic group.
    
    \item $\sigma \colon \Gamma \to \bar{L}$ is an injective group morphism, where $\bar{L} \subseteq PSL_2(\C)$ is determined by the sets $I_{+}$ and $I_{-}$. 
    
    \item $\delta \colon N \to \widehat{\Gamma}$ is a group morphism.
\end{itemize}
\end{defi}

\begin{remark}
   As in the odd case, $I_{+}, I_{-} \in \{\emptyset, \{1\}\}$ where the choice $I_{+}=\{\emptyset\}$ corresponds to taking the quotient by the ideal generated by $c$, $I_{-}=\{\emptyset\}$ corresponds to taking the ideal generated by $b$ and if $I_{\pm}=\{1\}$ we do not take quotients. These sets define an algebraic Lie subalgebra $\mathfrak{l}$ of $\mathfrak{sl}_{2}(\C)$ with connected Lie subgroup $L$ of $SL_2(\C)$ and a subgroup $\bar{L}$ of $PSL_2(\C)$, such that $\mathfrak{l} = \mathfrak{l}_{+} \oplus \C h \oplus \mathfrak{l}_{-}$ and $\mathfrak{l}_{+} = \C e$ or $\mathfrak{l}_{+} = 0$ and $\mathfrak{l}_{-} = \C f$ or $\mathfrak{l}_{-} = 0$.
\end{remark}

We have the following result that characterizes all the subgroups of $\Oc_q(SL_2(\C))$ for $q$ a root of unity of even order $\ell \neq 2$. The notion of isomorphism between two quotients of $\Oc_q(SL_2(\C))$ and equivalence between two subgroup data are given in Definition \ref{isoquot} and Definition \ref{isodata}, respectively.

\begin{thm} \label{teorema}
    Let $q$ be a root of unity of even order $\ell =2m , m \neq 1$. There exists a bijection between
    \begin{enumerate}
        \item[(i)]  Hopf algebra quotients $\rho:\Oc_q(SL_2(\C)) \to A$ up to isomorphism.
        \item[(ii)] Even subgroup datum up to equivalence.
    \end{enumerate}
\end{thm}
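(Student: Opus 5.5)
The plan mirrors the odd case and the framework of \cite{AG}, now using the normal (rather than central) exact sequence $\mathcal{O}(PSL_2(\C)) \hookrightarrow \mathcal{O}_q(SL_2(\C)) \twoheadrightarrow \mathfrak{u}_{2,q}(\mathfrak{sl}_2)^{\circ}$ of Theorem \ref{teoremase}(iii). The map from (ii) to (i) is obtained by construction. The map from (i) to (ii) is obtained by determination. We then check that the two maps are mutually inverse up to the equivalences of Definitions \ref{isoquot} and \ref{isodata}.

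\emph{Construction.} Given an even subgroup datum $\mathcal{D}=(I_+,I_-,N,\Gamma,\sigma,\delta)$, we first quotient $\mathcal{O}_q(SL_2(\C))$ by $(c)$, $(b)$, both, or neither, according to $I_\pm$. This produces $\mathcal{O}_q(L)$, together with a commutative diagram of exact sequences
\[
\mathcal{O}(\bar L)\hookrightarrow \mathcal{O}_q(L)\twoheadrightarrow \mathfrak{u}_{2,q}(\mathfrak{l})^{\circ},
\]
where $\bar L$ is the image of $L$ in $PSL_2(\C)$ and $\mathcal{O}(\bar L)$ is generated by the images of the monomials $x^my^m$. Exactness of this row follows from Proposition \ref{ff}: $\mathcal{O}(\bar L)$ is commutative, hence $\mathcal{O}_q(L)$ is faithfully flat over it, and normality is inherited from the lemma giving $N$.

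Next, we take $\mathcal{J}=\ker\sigma^t\subseteq \mathcal{O}(\bar L)$ and apply Proposition \ref{prop2mu}. This gives the pushout $\mathcal{O}_q(L)/(\mathcal{J})$ sitting in an exact sequence with kernel $\mathcal{O}(\Gamma)$ and cokernel $\mathfrak{u}_{2,q}(\mathfrak{l})^{\circ}$.

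Finally, in Case I ($s=1$), $\bar a$ is grouplike. For $N=\langle p\rangle\subseteq \Z_\ell$ we quotient by $J_\delta=(\bar a^{p}-\delta(p))$, where $\delta(p)\in\widehat\Gamma\subseteq \mathcal{O}(\Gamma)$ is grouplike. We then check, as in \cite[Theorem 2.17]{AG}, that the kernel remains $\mathcal{O}(\Gamma)$ and the cokernel becomes $\C\Z_\ell/\C N$. This uses that $\bar a^{\ell}$ already lies in $\mathcal{O}(\Gamma)$ (here $\bar a^{2m}=\bar a^m\bar a^m$ is one of the generators), so the relation is compatible.

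\emph{Determination.} Let $\rho:\mathcal{O}_q(SL_2(\C))\twoheadrightarrow A$ be a Hopf quotient. Proposition \ref{prop1mu} gives an exact sequence $K\hookrightarrow A\twoheadrightarrow H$ with $K$ a commutative quotient of $\mathcal{O}(PSL_2(\C))$, so $K\cong\mathcal{O}(\Gamma)$ for some closed $\sigma:\Gamma\hookrightarrow PSL_2(\C)$. It also gives $H$ as a quotient of $\mathfrak{u}_{2,q}(\mathfrak{sl}_2)^{\circ}$, so $H^\circ\subseteq \mathfrak{u}_{2,q}(\mathfrak{sl}_2)$ is a Hopf subalgebra.

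We then need the even-order analogue of \cite[(1.12), Lemma 3.1]{AG}: Hopf subalgebras of $\mathfrak{u}_{2,q}(\mathfrak{sl}_2)$ are determined by a triple $(\Sigma,I_+,I_-)$ with $\Sigma\subseteq\langle k\rangle\cong\Z_{2\ell}$. Moreover, $H^\circ\subseteq\mathfrak{u}_{2,q}(\mathfrak{l})$ with equality unless both $e,f$ are absent. Granting this, the analogue of \cite[Lemma 3.2]{AG} gives $\sigma(\Gamma)\subseteq\bar L$. We show it by noting that $\rho$ kills $c$ (resp.\ $b$) when $f\notin H^\circ$ (resp.\ $e$), and hence kills $c^m x^m$ (resp.\ $b^m x^m$). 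The universal property of the pushout then yields $t:\mathcal{O}_q(L)/(\mathcal{J})\twoheadrightarrow A$, which is the identity on $\mathcal{O}(\Gamma)$.

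If $w:\mathfrak{u}_{2,q}(\mathfrak{l})^\circ\to H$ is an isomorphism, Proposition \ref{propiso} makes $t$ an isomorphism. Otherwise we are in the torus case with $\ker w$ spanned by a group $N$, and an analogue of \cite[Lemma 3.3]{AG} produces $\delta$ with $A\cong A_{\mathcal D}$, again via Proposition \ref{propiso}.

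\emph{Bijectivity and the main obstacle.} Bijectivity proceeds as in the proposition for $q=-1$ and \cite[Theorem 2.20]{AG}. Isomorphic quotients have equal $\ker\rho\cap\mathcal{O}(PSL_2(\C))$, and this recovers $\sigma(\Gamma)$ up to the equivalence of Definition \ref{isodata}. The ideal $\pi(\ker\rho)$ recovers $I_\pm$ and $N$, and comparing the images of $\bar a^{p}$ recovers $\delta$.

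The main obstacle is the classification of Hopf subalgebras of $\mathfrak{u}_{2,q}(\mathfrak{sl}_2)$ for even $\ell$, and the matching of $N$ with a subgroup of $\Z_\ell$. The grouplikes form $\Z_{2\ell}$ rather than $\Z_\ell$, and $e,f$ have order $m$, so we must carefully identify which grouplike quotients are compatible with $\mathcal{O}(\Gamma)$ being the full kernel. I would attack this first by a direct computation in the torus case $\mathcal{O}_q(SL_2(\C))/(b,c)=\C[\bar a^{\pm1}]$, where everything reduces to subgroups of cyclic groups.
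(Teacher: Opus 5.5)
Your outline matches the paper's proof: Lemma \ref{lemaconst}, then Lemma \ref{lemadet}, then \cite[Theorem 2.20]{AG} mutatis mutandis. However, two steps that carry real content are either unargued or wrong as stated.

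\textbf{First gap: the vanishing of $\rho(c)$.} In the determination you assert, without argument, that $\rho$ kills $c$ (resp.\ $b$) when $f\notin H^\circ$ (resp.\ $e\notin H^\circ$). This is the whole content of the analogue of \cite[Lemma 3.2]{AG}. It is exactly what makes $\rho$ factor through $\Oc_q(L)$, so that the pushout applies.

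It is not formal. From $f\notin H^\circ$ you only get $r\pi(c)=0$, i.e.\ $\rho(c)\in\Ker\bar\pi=\rho(\Ker\pi)$, i.e.\ $c\in\Ker\pi+\Ker\rho$. The same formal situation occurs at $q=-1$: the quotients $\Oc_{-1}(SL_2(\C))\twoheadrightarrow\Oc(D_{2m})$ of Theorem \ref{teo-1}(ii) have $H=\C$ but $\rho(b),\rho(c)\neq0$. So any argument must genuinely use $q^2\neq1$. (The paper's proof of Lemma \ref{lemadet} does give an argument at this point, via $\Ker\bar\pi=\rho(\Ker\pi)$ and a PBW basis; you give none.)

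In the torus cases one can argue directly:
\begin{itemize}
\item $H$ is the group algebra of the cyclic group generated by $g=\bar\pi\rho(a)$.
\item Its left and right coactions bigrade $A$, with $\rho(a),\rho(b),\rho(c),\rho(d)$ in bidegrees $(g,g)$, $(g,g^{-1})$, $(g^{-1},g)$, $(g^{-1},g^{-1})$.
\item Since $A^{\co\bar\pi}=K={}^{\co\bar\pi}A$, every component of bidegree $(x,1)$ or $(1,x)$ with $x\neq1$ vanishes.
\item If $g^2\neq1$, this gives $\rho(ba)=\rho(bd)=0$. Comparing $\rho(b)=\rho(b)\rho(ad-qbc)$ with $\rho(b)=\rho(b)\rho(da-q^{-1}bc)$ yields $(q-q^{-1})\rho(b^2c)=0$, hence $\rho(b)=0$.
\end{itemize}
The case $g^2=1$ (where $K$ contains all even monomials and is commutative) and the Borel cases need their own arguments.

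\textbf{Second gap: the $J_\delta$ step in Case I.} Your check that the kernel stays $\Oc(\Gamma)$ after dividing by $J_\delta=(\bar a^p-\delta(p))$ fails as stated, and your justification points the wrong way.

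Raising the relation to the power $t=\ell/p$ gives $\bar a^{\ell}=\delta(p)^t$. Because $\bar a^{\ell}$ lies in $\Oc(\Gamma)$, this is an equation inside $\Oc(\Gamma)$. There $\bar a^\ell$ is the tautological character of $\Gamma\subseteq\bar T$, which is nontrivial unless $\Gamma=1$. If $\delta$ is a group morphism on $N\cong\Z_t$, as Definition \ref{def:subgroup-data-qeven} literally says, then $\delta(p)^t=1$. Hence $0\neq\bar a^{\ell}-1\in J_\delta\cap\Oc(\Gamma)$ whenever $\Gamma\neq1$ and $N\neq0$.

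What is needed is $\delta(p)^t=\sigma^t(\bar a^{\ell})$; compare the condition $rt\equiv1 \bmod k$ in the odd-order example of Section \ref{constructionodd}. This is also what the determination produces when you set $\delta(p)$ equal to the image of $\bar a^p$ under the map $t\colon A_{L,\sigma}\to A$. In that direction $J_\delta\cap\Oc(\Gamma)=0$ is automatic, since $t\circ\widehat{\iota}=\bar\iota$ is injective.

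Concretely, take $\ord(q)=4$ and the quotient $\Oc_q(SL_2(\C))\twoheadrightarrow\C\Z_6=\C[x]/(x^6-1)$ given by $a\mapsto x$, $d\mapsto x^{-1}$, $b,c\mapsto0$. It has $\Gamma\cong\mu_3$, $N=(2)$, and $\bar a^2\mapsto\chi^2$ with $\chi=x^4$. No group morphism $\Z_2\to\widehat{\mu_3}$ produces it. So the datum must be amended before your two maps can be mutually inverse.

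\textbf{A minor point.} Your $\Z_{2\ell}$ concern is a notational artifact. Definition \ref{defu2q} writes $\ell$ for $m=\ord(q^2)$. Since $\overline{\Oc}_q(SL_2(\C))/(\bar b,\bar c)=\C[\bar a]/(\bar a^{2m}-1)$, the grouplikes form $\Z_{2m}=\Z_\ell$, and $N\subseteq\Z_\ell$ is the right range.
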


We prove the theorem in Sections \ref{constructioneven} and \ref{determinationeven}. First we prove, in Lemma \ref{lemaconst} that a subgroup datum $\mathcal{D} = (I_{+}, I_{-}, N, \Gamma, \sigma, \delta)$ gives a quotient of $\Oc_q(SL_2(\C))$, and then, in Lemma \ref{lemadet}, we show that every quotient is determined by some even subgroup datum. The proof that it is a bijection follows mutatis mutandis from \cite[ Theorem 2.20]{AG}.

\subsection{Construction of quantum subgroups.} \label{constructioneven}

\begin{lemma} \label{lemaconst}
For every even subgroup data $\mathcal{D} = (I_{+}, I_{-}, N, \Gamma, \sigma, \delta)$, there exists a Hopf algebra $A_{\mathcal{D}}$ that is a quotient of $\Oc_{q}(SL_2(\C))$ and fits into the exact sequence
    $$ 1 \rightarrow \Oc(\Gamma) \xrightarrow{\bar{\iota}} A_{\mathcal{D}} \xrightarrow{\bar{\pi}} H \rightarrow 1, $$

Moreover, $A_{\mathcal{D}}$ is given by the quotient of  $\mathcal{O}_q(L)/(\mathcal{J})$ by $J_{\delta}$ where $\mathcal{J}=\ker \sigma^t$ and  $J_{\delta}$ is the two-sided ideal generated by some grouplike elements and the following diagram of exact sequences of Hopf algebras is commutative:

$$\xymatrix {
    \mathcal{O}(PSL_2(\C)) \ar@{^{(}->}[r]^{\iota} \ar@{->>}[d]^{u_L} 
    & \mathcal{O}_q(SL_2(\C)) \ar@{->>}[r]^{\pi} \ar@{->>}[d]^{v_L} & \mathfrak{u}_{2,q}(\mathfrak{sl}_2)^{\circ} \ar@{->>}[d]^{w_L} \\
    \mathcal{O}(\bar{L}) \ar@{^{(}->}[r]^{\iota_L} \ar@{->>}[d]^{\sigma^t}  & \mathcal{O}_q(L) \ar@{->>}[r]^{\pi_L} \ar@{->>}[d]^{s} & \mathfrak{u}_{2,q}(\mathfrak{l})^{\circ} \ar@{=}[d] \\
    \Oc(\Gamma) \ar@{^{(}->}[r]^{\widehat{\iota}} \ar@{=}[d] & \mathcal{O}_q(L)/(\mathcal{J}) \ar@{->>}[r]^{\widehat{\pi}} \ar@{->>}[d]^{t} & \mathfrak{u}_{2,q}(\mathfrak{l})^{\circ} \ar@{->>}[d]^{w} \\
    \Oc(\Gamma) \ar@{^{(}->}[r]^{\bar{\iota}} & A_{\mathcal{D}} \ar@{->>}[r]^{\bar{\pi}} & H
    }$$
\end{lemma}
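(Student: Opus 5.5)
The construction follows the three-step pattern used for odd order in Section \ref{constructionodd} (\cite[Theorem 2.17]{AG}). Throughout, the central subalgebra $\Oc(SL_2(\C))$ is replaced by the normal commutative subalgebra $N\cong\Oc(PSL_2(\C))$ generated by the monomials $x^my^m$, and $\mathfrak{u}_q(\mathfrak{sl}_2)^\circ$ is replaced by $\mathfrak{u}_{2,q}(\mathfrak{sl}_2)^\circ\cong\bar{\Oc}_q(SL_2(\C))$ (Theorem \ref{teoremase}(iii)).

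\emph{First step.} Fix $I_\pm$ and let $\mathcal{O}_q(L)$ be the quotient of $\Oc_q(SL_2(\C))$ by the Hopf ideal generated by $c$ and/or $b$, as in the four cases I--IV. Intersecting with $N$ gives $\Oc(\bar L)$, where $\bar L$ is the image in $PSL_2(\C)$ of the torus, the corresponding Borel subgroup, or the whole group. For example, in case II the image of $N$ is generated by $a^{2m}, a^mb^m, b^{2m}, d^{2m}$ (note $a^md^m=1$ there), which is $\Oc(\bar B^+)$. Proposition \ref{prop1mu} then gives an exact row $\Oc(\bar L)\hookrightarrow\mathcal{O}_q(L)\twoheadrightarrow \mathfrak{u}_{2,q}(\mathfrak{l})^\circ$. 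Here $\mathfrak{u}_{2,q}(\mathfrak{l})^\circ$ is $\bar{\Oc}_q(SL_2(\C))$ modulo $(c)$ and/or $(b)$; in case I it is $\C\Z_{2m}=\C\Z_\ell$, generated by the grouplike $\bar a$. To see that this row is exact, and not merely that the sequence of quotients is, I use Proposition \ref{ff}. The image of $\Oc(\bar L)$ in $\mathcal{O}_q(L)$ is commutative, so $\mathcal{O}_q(L)$ is faithfully flat over it. It is also normal, being the image of a normal Hopf subalgebra under a surjection. Hence $\Oc(\bar L)={}^{\co\pi_L}\mathcal{O}_q(L)$. The one remaining check is that the kernel of $\Oc(PSL_2(\C))\to\mathcal{O}_q(L)$ is exactly the ideal of $\bar L$. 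I will do this by a PBW-basis computation: the monomials $a^ib^jd^k$ (case II), and the analogous bases in the other cases, show which products $x^my^m$ survive.

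\emph{Second step.} Given $\sigma:\Gamma\hookrightarrow\bar L$, set $\mathcal{J}=\ker\sigma^t\subseteq\Oc(\bar L)$. Proposition \ref{prop2mu} applies, since $\mathcal{O}_q(L)$ is faithfully flat over the normal Hopf subalgebra $\Oc(\bar L)$ and $\sigma^t$ is surjective. It yields the pushout $\mathcal{O}_q(L)/(\mathcal{J})$ together with the exact row $\Oc(\Gamma)\hookrightarrow \mathcal{O}_q(L)/(\mathcal{J})\twoheadrightarrow\mathfrak{u}_{2,q}(\mathfrak{l})^\circ$. This gives the middle part of the diagram.

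\emph{Third step.} If $s=0$, take $N=0$, $J_\delta=0$ and $H=\mathfrak{u}_{2,q}(\mathfrak{l})^\circ$. If $s=1$ (case I), $N=\langle p\rangle\cong\Z_t\subseteq\Z_\ell$ with $\ell=pt$, and $\delta(p)=\chi$ is a character of $\Gamma$, which is a grouplike element of $\Oc(\Gamma)\subseteq\mathcal{O}_q(L)/(\mathcal{J})$. Define $J_\delta$ to be the ideal generated by $\bar a^{p}-\chi$. The element $\bar a^p\chi^{-1}$ is grouplike, so $J_\delta$ is a Hopf ideal. Because the coinvariant algebra is commutative, $\bar a^p$ commutes with the image of $\Oc(\Gamma)$ up to the adjoint action, and I expect $\bar a^p\chi^{-1}$ to be central or at least normal. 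Then $A_{\mathcal{D}}=(\mathcal{O}_q(L)/(\mathcal{J}))/J_\delta$ and $H=\C\Z_\ell/\C\Z_t$.

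\textbf{Main obstacle.} The hard part is proving that $\Oc(\Gamma)$ still injects into $A_{\mathcal{D}}$ and that the bottom row is exact. For this I will use the grading of $\mathcal{O}_q(L)/(\mathcal{J})$ given by the comodule structure over $\C\Z_\ell$, which I expect to be a crossed product $\Oc(\Gamma)\#_\tau\C\Z_\ell$ with homogeneous invertible components $\bar a^i$. Quotienting by $\bar a^p-\chi$ identifies components in the same $N$-coset. I must check compatibility, namely $(\bar a^p)^t=\bar a^\ell$, and verify that this holds when $\chi^t$ equals the image of $a^\ell$ in $\Oc(\Gamma)$. This consistency condition should be encoded in requiring $\delta$ to be a group morphism, and it is exactly where evenness enters: $a^{\ell}=a^{2m}$ lies in $\Oc(\bar L)$, while $a^m$ does not. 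Once that is settled, Proposition \ref{ff} applied to the commutative subalgebra gives exactness, and commutativity of the whole diagram follows from the construction.
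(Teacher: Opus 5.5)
\textbf{Steps 1--2.} Your first two steps are the paper's construction: the quotient by $(c)$, $(b)$ or $(b,c)$, followed by the pushout of Proposition \ref{prop2mu}. Your use of Proposition \ref{ff} to justify exactness of the first row is fine.

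\textbf{The gap in step 3.} It is the sentence claiming that the compatibility condition ``should be encoded in requiring $\delta$ to be a group morphism''. It is not; the group-morphism condition contradicts it. In case I we have $\mathcal{O}_q(T)=\C[a^{\pm1}]$ and $\Oc(\bar T)=\C[a^{\pm 2m}]$. So $a^{2m}$ is the coordinate of $\bar T\cong\C^{\times}$, and its image $\chi_0\in\Oc(\Gamma)$ generates $\widehat{\Gamma}$ for every algebraic subgroup $\Gamma\subseteq\bar T$. Let $N=\langle p\rangle\cong\Z_t$ with $pt=\ell$. If $\delta$ is a group morphism, then $\delta(p)^t=\delta(0)=1$, so in $A_{\mathcal D}$
\[
\chi_0=\bar a^{\ell}=(\bar a^{p})^{t}=\delta(p)^{t}=1 .
\]
Hence $0\neq\chi_0-1\in J_\delta\cap\Oc(\Gamma)$ whenever $\Gamma\neq 1$ and $N\neq 0$, and $\Oc(\Gamma)\to A_{\mathcal D}$ is not injective. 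Two instances:
\begin{itemize}
\item For $\Gamma=\bar T$, the only group morphism $\delta$ is trivial, and then $\bar a^{\ell}=1$ kills $\Oc(\bar T)$.
\item In the example continuing Example \ref{zn2} ($m=n$, $N=(2)$, $\delta(2)=\alpha$), the quotient is $\C\Z_{2m^2}/(\bar a^{2}-\bar a^{2m})\cong\C\Z_2$, because $\gcd(2m-2,2m^2)=2$. It is not $\C^{\Z_{2n}}$.
\end{itemize}

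\textbf{What is missing.} The datum itself has to change: you need $\delta(p)^t=\chi_0$. This is exactly the condition $rt\equiv 1 \bmod k$ that the paper derives in its odd-order example in Section \ref{constructionodd}. Equivalently, $\delta$ should be a group morphism $p\Z\to\widehat{\Gamma}$ extending the restriction map $\ell\Z=\widehat{\bar T}\to\widehat{\Gamma}$, i.e.\ a choice of $t$-th root of $\chi_0$ in $\widehat{\Gamma}$. Such a root may not exist, e.g.\ for $\Gamma=\mu_2$ with $t=2$, or for $\Gamma=\bar T$ with $t>1$.

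With this condition no crossed-product argument is needed, because in case I everything is a group algebra of a cyclic group. Take $\Gamma=\mu_n$ and $\delta(p)=\chi_0^{r}$. Then $J_\delta$ is generated by $\bar a^{p-\ell r}-1$ in $\C\Z_{\ell n}$. If $k(p-\ell r)\in\ell\Z$, then $t\mid k$; writing $k=tj$ gives $k(p-\ell r)=j\ell(1-rt)\equiv 0 \bmod \ell n$. So $\langle p-\ell r\rangle\cap\langle \ell\rangle=0$ in $\Z_{\ell n}$, and the bottom row is exact.

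\textbf{The paper does not supply this either.} Its construction only asserts that the bottom row is exact. The ``comparing degrees'' argument for $J_\delta\cap\Oc(\Gamma)=0$ in the proof of Lemma \ref{lemadet} fails for the same reason, so you cannot borrow it.
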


As in the odd case, the construction is done in three steps where the first one is as follows: Take the quotient by the ideals $(b), (c)$ or $(b,c)$, this corresponds to the choice of $I_{+}, I_{-}$. Then we can construct a Hopf subalgebra $\mathfrak{u}_{2,q}(\mathfrak{l}) \subseteq \mathfrak{u}_{2,q}(\mathfrak{sl}_2)$ and algebraic subgroups $L \subseteq SL_2(\C)$ and $\bar{L} \subseteq PSL_2(\C)$ such that $Lie(L)=\mathfrak{l}$. We have an epimorphism  $u_L : \mathfrak{u}_{2,q}(\mathfrak{sl}_2)^{\circ}
\rightarrow \mathfrak{u}_{2,q}(\mathfrak{l})^{\circ}$ of Hopf algebras and
 the following commutative
diagram of exact sequences of Hopf algebras 

$$\xymatrix {
    \mathcal{O}(PSL_2(\C)) \ar@{^{(}->}[r]^{\iota} \ar@{->>}[d]^{u_L} 
    & \mathcal{O}_q(SL_2(\C)) \ar@{->>}[r]^{\pi} \ar@{->>}[d]^{v_L} & \mathfrak{u}_{2,q}(\mathfrak{sl}_2)^{\circ} \ar@{->>}[d]^{w_L} \\
    \mathcal{O}(\bar{L}) \ar@{^{(}->}[r]^{\iota_L}  & \mathcal{O}_q(L) \ar@{->>}[r]^{\pi_L} & \mathfrak{u}_{2,q}(\mathfrak{l})^{\circ}
    }$$

After this construction we obtain four possible diagrams that are shown below.

\begin{enumerate}
    \item[Case I:] Take the quotient by the ideals generated by $b$ and $c$ so $\mathfrak{l} =  \C h$ and $\mathfrak{u}_{2,q}(\mathfrak{h})$ is the subalgebra of $\mathfrak{u}_{2,q}(\mathfrak{sl}_2)$ generated by $k$. The diagram in this case is 
    $$\xymatrix {
    \mathcal{O}(PSL_2(\C)) \ar@{^{(}->}[r]^{\iota} \ar@{->>}[d]^{u_L} 
    & \mathcal{O}_q(SL_2(\C)) \ar@{->>}[r]^{\pi} \ar@{->>}[d]^{v_L} & \mathfrak{u}_{2,q}(\mathfrak{sl}_2)^{\circ} \ar@{->>}[d]^{w_L} \\
    \Oc(\bar{T})  \ar@{^{(}->}[r]^(.4){\iota_L}  & \mathcal{O}_q(SL_2(\C))/(b,c) \ar@{->>}[r]^{\pi_L} & \mathfrak{u}_{2,q}(\mathfrak{h})^{\circ} \simeq \C\Z_{\ell}
    }$$

\noindent Here $ \Oc(\bar{T}) \simeq \mathcal{O}(PSL_2(\C))/(X_{12}^2, X_{21}^2,X_{11}X_{12},X_{11}X_{21},X_{12}X_{21},X_{12}X_{22},X_{21}X_{22})$.

\item[Case II:] Take the quotient by the ideal $\{c\}$ so $\mathfrak{l} = \C e \oplus \C h$ and $\mathfrak{u}_{2,q}(\mathfrak{b}^{+})$ is the subalgebra of $\mathfrak{u}_{2,q}(\mathfrak{sl}_2)$ generated by $k$ and $e$. The diagram in this case is 
    $$\xymatrix {
    \mathcal{O}(PSL_2(\C)) \ar@{^{(}->}[r]^{\iota} \ar@{->>}[d]^{u_L} 
    & \mathcal{O}_q(SL_2(\C)) \ar@{->>}[r]^{\pi} \ar@{->>}[d]^{v_L} & \mathfrak{u}_{2,q}(\mathfrak{sl}_2)^{\circ} \ar@{->>}[d]^{w_L} \\
    \Oc(\bar{B}^{+})  \ar@{^{(}->}[r]^(.4){\iota_L}  & \mathcal{O}_q(SL_2(\C))/(c) \ar@{->>}[r]^(.6){\pi_L} & \mathfrak{u}_{2,q}(\mathfrak{b}^{+})^{\circ} 
    }$$

\noindent Here $ \Oc(\bar{B}^{+}) \simeq \mathcal{O}(PSL_2(\C))/(X_{11}X_{21}, X_{12}X_{21}, X_{21}^2, X_{21}X_{22})$.

\item[Case III:] Take the quotient by the ideal $\{b\}$ so $\mathfrak{l} = \C h \oplus \C f$ and $\mathfrak{u}_{2,q}(\mathfrak{b}^{-})$ is the subalgebra of $\mathfrak{u}_{2,q}(\mathfrak{sl}_2)$ generated by $k$ and $f$. The diagram in this case is 
    $$\xymatrix {
    \mathcal{O}(PSL_2(\C)) \ar@{^{(}->}[r]^{\iota} \ar@{->>}[d]^{u_L} 
    & \mathcal{O}_q(SL_2(\C)) \ar@{->>}[r]^{\pi} \ar@{->>}[d]^{v_L} & \mathfrak{u}_{2,q}(\mathfrak{sl}_2)^{\circ} \ar@{->>}[d]^{w_L} \\
    \Oc(\bar{B}^{-})  \ar@{^{(}->}[r]^(.4){\iota_L}  & \mathcal{O}_q(SL_2(\C))/(b) \ar@{->>}[r]^(.6){\pi_L} & \mathfrak{u}_{2,q}(\mathfrak{b}^{-})^{\circ} 
    }$$

\noindent Here $ \Oc(\bar{B}^{-}) \simeq \mathcal{O}(PSL_2(\C))/(X_{12}^2, X_{11}X_{12},X_{12}X_{21}, X_{12},X_{22}) $.

    \item[Case IV:] We do not take quotients so $\mathfrak{l} = \C e \oplus \C h \oplus \C f$ and we have  $$\xymatrix {
    \mathcal{O}(PSL_2(\C)) \ar@{^{(}->}[r]^{\iota} \ar@{=}[d]
    & \mathcal{O}_q(SL_2(\C)) \ar@{->>}[r]^{\pi} \ar@{=}[d] & \mathfrak{u}_{2,q}(\mathfrak{sl}_2)^{\circ} \ar@{=}[d] \\
    \mathcal{O}(PSL_2(\C)) \ar@{^{(}->}[r]^{\iota}  & \mathcal{O}_q(SL_2(\C)) \ar@{->>}[r]^{\pi} & \mathfrak{u}_{2,q}(\mathfrak{sl}_2)^{\circ}
    }$$

\end{enumerate}

For the second step, let $\sigma: \Gamma \rightarrow PSL_2(\C)$  be the inclusion of the subgroup $\Gamma$ of $PSL_2(\C)$ such that $\sigma(\Gamma) \subseteq \bar{L}$ and $\sigma^t : \mathcal{O}(\bar{L}) \rightarrow \mathcal{O}(\Gamma)$. Then, by Proposition \ref{prop2mu} the quotients of $\mathcal{O}_q(L)$ fit in an exact sequence and we have the following commutative diagram

$$\xymatrix {
    \mathcal{O}(PSL_2(\C)) \ar@{^{(}->}[r]^{\iota} \ar@{->>}[d]^{u_L} 
    & \mathcal{O}_q(SL_2(\C)) \ar@{->>}[r]^{\pi} \ar@{->>}[d]^{v_L} & \mathfrak{u}_{2,q}(\mathfrak{sl}_2)^{\circ} \ar@{->>}[d]^{w_L} \\
    \mathcal{O}(\bar{L}) \ar@{^{(}->}[r]^{\iota_L} \ar@{->>}[d]^{\sigma^t}  & \mathcal{O}_q(L) \ar@{->>}[r]^{\pi_L} \ar@{->>}[d]^{s} & \mathfrak{u}_{2,q}(\mathfrak{l})^{\circ} \ar@{=}[d] \\
    \Oc(\Gamma) \ar@{^{(}->}[r]^{\widehat{\iota}} & \mathcal{O}_q(L)/(\mathcal{J}) \ar@{->>}[r]^{\widehat{\pi}} & \mathfrak{u}_{2,q}(\mathfrak{l})^{\circ}
    }$$

where $\mathcal{J}=\text{Ker } \sigma^t$ and $\Oc(\Gamma) \cong \mathcal{O}(\bar{L})/\mathcal{J}$.


\begin{example} \label{zn2}

Recall the inclusion $\sigma: \Z_n \to PSL_2(\C)$ given in Example \ref{zn}. By the same computation $\ker \sigma^t= (X_{11}^{2n}-1, X_{22}^{2n}-1, X_{11}X_{22}-1,  X_{ij}X_{kl}  \text{ with } i \neq j \text{ or } k \neq l)$. 

Since $ad-qbc=1$, we have that $b(ad-qbc)=b$ and $s(b)=s(bad) - s(qbbc)= 0$. In the same way, $s(c)=0$. This implies that $b,c \in (\ker \sigma^{t})$.

Then, $\mathcal{O}_{q}(SL_2(\C))/(\ker \sigma^{t}) = \C \langle a | a^{2mn}=1 \rangle \cong \C ^{\Z_{2mn}} \cong \C \Z_{2mn} $ and we obtain the commutative diagram:

$$\xymatrix {
    \mathcal{O}(PSL_2(\C)) \ar@{^{(}->}[r]^{\iota} \ar@{->>}[d]^{u_T} 
    & \mathcal{O}_{q}(SL_2(\C)) \ar@{->>}[r]^{\pi} \ar@{->>}[d]^{v_T} &  \mathfrak{u}_{2,q}(\mathfrak{sl}_2)^{\circ} \ar@{->>}[d]^{w_T} \\
    \Oc(\bar{T}) \ar@{^{(}->}[r]^(.4){\iota_T}  \ar@{->>}[d]^{\sigma^t}  & \mathcal{O}_q(SL_2(\C))/ (b,c) \ar@{->>}[r]^(.3){\pi_T}  \ar@{->>}[d]^{s} &  \mathcal{O}_q(SL_2(\C))/ \langle a^{2m-1}, b,c, d^{2m-1}\rangle \cong \C \Z_{2m}  \ar@{=}[d] \\
    \C^{\Z_n} \ar@{^{(}->}[r]^{\widehat{\iota}}  & \C^{\Z_{2mn}} \ar@{->>}[r]^(.3){\widehat{\pi}} &  \mathcal{O}_q(SL_2(\C))/ \langle a^{2m-1}, b,c, d^{2m-1} \rangle \cong \C \Z_{2m}
    }$$

\end{example}
Finally, in case I, we take the quotient by the ideal generated by $b$ and $c$ and we have one more step. Let $N$ be a cyclic subgroup of $\Z_{\ell}$, suppose $N=(p) = \Z_t$ where $pt=\ell$, and define $\delta: N \to \widehat{\Gamma}\simeq (\chi)$ such that $\delta(p)= \chi^s$. One may take the quotient of $\mathcal{O}_q(L)/(\mathcal{J})$ by $J_{\delta}=(\bar{a}^p - \delta(p))$ and get another quantum subgroup. We obtain the commutative diagram

$$\xymatrix {
    \mathcal{O}(PSL_2(\C)) \ar@{^{(}->}[r]^{\iota} \ar@{->>}[d]^{u_L} 
    & \mathcal{O}_q(SL_2(\C)) \ar@{->>}[r]^{\pi} \ar@{->>}[d]^{v_L} & \mathfrak{u}_{2,q}(\mathfrak{sl}_2)^{\circ} \ar@{->>}[d]^{w_L} \\
    \mathcal{O}(\bar{L}) \ar@{^{(}->}[r]^{\iota_L} \ar@{->>}[d]^{\sigma^t}  & \mathcal{O}_q(L) \ar@{->>}[r]^{\pi_L} \ar@{->>}[d]^{s} & \mathfrak{u}_{2,q}(\mathfrak{l})^{\circ} \ar@{=}[d] \\
    \Oc(\Gamma) \ar@{^{(}->}[r]^{\widehat{\iota}} \ar@{=}[d] & \mathcal{O}_q(L)/(\mathcal{J}) \ar@{->>}[r]^{\widehat{\pi}} \ar@{->>}[d]^{t} & \mathfrak{u}_{2,q}(\mathfrak{l})^{\circ} \ar@{->>}[d]^{w} \\
    \Oc(\Gamma) \ar@{^{(}->}[r]^{\bar{\iota}} & A_{\mathcal{D}} \ar@{->>}[r]^{\bar{\pi}} & H
    }$$

where $A_{\mathcal{D}}$ is the quotient of  $\mathcal{O}_q(L)/(\mathcal{J})$ by $J_{\delta}$. Let us see this step with the following example.

\begin{example}

Continuing with Example \ref{zn2}, suppose that $m=n$ and take $N= (2)$ and $\delta(2)=\alpha$ where $\C ^{\Z_n} = \langle \alpha \rangle$. Then we take the quotient $\C^{\Z_{2mn}}/ \langle a^2=\alpha \rangle \cong \C^{\Z_{2n}} $ and we have the following diagram.

$$\xymatrix {
    \mathcal{O}(PSL_2(\C)) \ar@{^{(}->}[r]^{\iota} \ar@{->>}[d]^{u_T} 
    & \mathcal{O}_{q}(SL_2(\C)) \ar@{->>}[r]^{\pi} \ar@{->>}[d]^{v_T} &  \mathfrak{u}_{2,q}(\mathfrak{sl}_2)^{\circ} \ar@{->>}[d]^{w_T} \\
    \mathcal{O}(T) \ar@{^{(}->}[r]^(.4){\iota_T}  \ar@{->>}[d]^{\sigma^t}  & \mathcal{O}_q(SL_2(\C))/ (b,c) \ar@{->>}[r]^(.3){\pi_T}  \ar@{->>}[d]^{s} &  \mathcal{O}_q(SL_2(\C))/ \langle a^{2m-1}, b,c, d^{2m-1} \rangle \cong \C \Z_{2m} \ar@{=}[d] \\
    \C^{\Z_n} \ar@{^{(}->}[r]^{\widehat{\iota}} \ar@{=}[d]  & \C^{\Z_{2mn}} \ar@{->>}[r]^(.3){\widehat{\pi}} \ar@{->>}[d]^{t} &  \mathcal{O}_q(SL_2(\C))/ \langle a^{2m-1}, b,c, d^{2m-1} \rangle \cong \C \Z_{2m} \ar@{->>}[d]^{w} \\
    \C^{\Z_n} \ar@{^{(}->}[r]^{\bar{\iota}}  & \C^{\Z_{2n}} \ar@{->>}[r]^(.3){\bar{\pi}} &  \mathcal{O}_q(SL_2(\C))/ \langle a^{2}, b,c, d^{2} \rangle \cong \C \Z_2
    }$$
In particular, $\C^{\Z_{2n}}= \C \langle a,b,c,d, \alpha \ |\ b=c=0, ad=1, a^2=\alpha \rangle$.
\end{example}

\subsection{Determination of quantum subgroups.} \label{determinationeven}

The last step of the proof of Theorem \ref{teorema} is to see that this construction is comprehensive. We will prove this in the following lemma.

\begin{lemma} \label{lemadet}
Every quotient of Hopf algebras $\rho : \Oc_q(SL_2(\C)) \to A$ is determined by some even subgroup datum $\mathcal{D} = (I_{+}, I_{-}, N, \Gamma, \sigma, \delta)$. 
\end{lemma}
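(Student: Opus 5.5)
We follow the pattern of the odd case in Section \ref{determinationodd}, replacing the central subalgebra $\Oc(SL_2(\C))$ by the normal subalgebra $\Oc(PSL_2(\C))\cong N$ and $\mathfrak{u}_q(\mathfrak{sl}_2)$ by $\mathfrak{u}_{2,q}(\mathfrak{sl}_2)$.

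\emph{Step 1: the induced exact sequence.} Let $\rho:\Oc_q(SL_2(\C))\twoheadrightarrow A$ be a Hopf algebra epimorphism. By Theorem \ref{teoremase}(iii) we have the normal exact sequence $\Oc(PSL_2(\C))\hookrightarrow \Oc_q(SL_2(\C))\twoheadrightarrow \mathfrak{u}_{2,q}(\mathfrak{sl}_2)^{\circ}$. Proposition \ref{prop1mu} then gives an exact sequence $K\hookrightarrow A\twoheadrightarrow H$ with $K=\rho(\Oc(PSL_2(\C)))$ and $H=\mathfrak{u}_{2,q}(\mathfrak{sl}_2)^{\circ}/\pi(\ker\rho)$. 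This yields a commutative diagram with surjective vertical maps $p$ and $r$.
\begin{itemize}
\item Since $K$ is a commutative quotient Hopf algebra of $\Oc(PSL_2(\C))$, we have $K\cong\Oc(\Gamma)$ for a closed subgroup $\Gamma$, with $\sigma:\Gamma\hookrightarrow PSL_2(\C)$ and $p=\sigma^t$.
\item Since $H$ is a quotient of $\mathfrak{u}_{2,q}(\mathfrak{sl}_2)^{\circ}$, its dual $H^{\circ}$ is a Hopf subalgebra of $\mathfrak{u}_{2,q}(\mathfrak{sl}_2)$.
\end{itemize}

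\emph{Step 2: classify $H^{\circ}$ and choose $I_\pm$.} We use the even-order analogue of \cite[(1.12), Lemma 3.1]{AG}: a Hopf subalgebra of $\mathfrak{u}_{2,q}(\mathfrak{sl}_2)$ is generated by a subgroup $\Sigma\subseteq\langle k\rangle\cong\Z_\ell$ together with those $e$ and $f$ that it contains. This gives $I_+,I_-$ and hence $\mathfrak{l}$, with $H^{\circ}\subseteq\mathfrak{u}_{2,q}(\mathfrak{l})$. Here $e\in H^{\circ}$ corresponds to $I_+=\{1\}$, and similarly for $f$.

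\emph{Main obstacle.} The hard step is checking this analogue of \cite[Lemma 3.1]{AG} for $\mathfrak{u}_{2,q}$, where $k$ has order $2\ell$ and $e,f$ have nilpotency order $m$. We would argue via the coradical filtration. The grouplikes of $H^{\circ}$ lie in $\langle k\rangle$. If a skew-primitive $e k^{i}$ lies in $H^{\circ}$, then the relation $\Delta(e)=1\otimes e+e\otimes k$ forces $k\in H^{\circ}$ up to the relevant grouplikes. Finally, the commutator $[e,f]=(k-k^{-1})/(q^{-1}-q)$ shows that if both $e$ and $f$ lie in $H^{\circ}$ then so does all of $\mathfrak{u}_{2,q}(\mathfrak{sl}_2)$, which is consistent with $s=0$.

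\emph{Step 3: factor $\rho$ through $\Oc_q(L)$.} Dualising the inclusion $H^{\circ}\subseteq\mathfrak{u}_{2,q}(\mathfrak{l})$ gives $w:\mathfrak{u}_{2,q}(\mathfrak{l})^{\circ}\twoheadrightarrow H$. We then show that $\rho$ factors through $v_L$, i.e. that $\rho(c)=0$ when $I_+=\emptyset$ and $\rho(b)=0$ when $I_-=\emptyset$. For this we pair with $H^{\circ}$: since $\bar\pi\rho(c)$ annihilates $H^{\circ}$ when $f\notin H^{\circ}$, we get $\rho(c)\in A\,\Oc(\Gamma)^+$. We would then use that $c^m\in N$ maps into the augmentation ideal, together with $q$-commutation, to conclude $\rho(c)=0$, mimicking \cite[Lemma 3.2]{AG}. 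The same argument gives $\sigma(\Gamma)\subseteq\bar L$, because $u_L$ kills exactly the functions $x^my^m$ vanishing on $\bar L$.

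\emph{Step 4: reduce to the pushout.} By Proposition \ref{prop2mu}, $\rho$ factors through the pushout $\Oc_q(L)/(\ker\sigma^t)$. This produces a map $t$ onto $A$ which is the identity on $\Oc(\Gamma)$ and equals $w$ on the quotients.
\begin{itemize}
\item If $w$ is an isomorphism, which happens when $H^{\circ}=\mathfrak{u}_{2,q}(\mathfrak{l})$ (in particular in Cases II--IV, where $s=0$ and $N=0$), then Proposition \ref{propiso} shows that $t$ is an isomorphism, so $A\cong A_{\mathcal D}$.
\item Otherwise we are in Case I with $H^{\circ}=\C\Sigma\subsetneq\C\Z_\ell$, and we set $\C N=\ker w$ with $N=(p)\cong\Z_t$. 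The grouplike $\bar a^{p}$ then lies in the coinvariants, hence in $\Oc(\Gamma)$, so $\rho(a^{p})=\delta(p)$ for a character $\delta(p)\in\widehat\Gamma$. This defines $\delta:N\to\widehat\Gamma$ as in \cite[Lemma 3.3]{AG}.
\end{itemize}
In the second case, applying Proposition \ref{propiso} once more to $\Oc_q(L)/(\mathcal J)/J_\delta\twoheadrightarrow A$ gives $A\cong A_{\mathcal D}$.
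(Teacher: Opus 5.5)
Your outline matches the paper's: the induced sequence $K\hookrightarrow A\twoheadrightarrow H$, reading off $I_\pm$ from $H^\circ$, factoring $\rho$ through $v_L$, the pushout $A_{L,\sigma}$, Proposition \ref{propiso}, and $\delta$ in Case I. Your coinvariant argument for $t(\bar a^{p})\in\Oc(\Gamma)$ is in fact cleaner than the paper's.

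The genuine gap is Step 3, the deduction of $\rho(c)=0$ (resp.\ $\rho(b)=0$) from $\bar\pi\rho(c)=0$. This step is where the content of the lemma lies. The classification of $H^\circ$, which you call the main obstacle, is standard, and the paper takes it for granted. Your sketch makes no real progress on Step 3, for three reasons.

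First, $\rho(c)\in A\Oc(\Gamma)^+$ is only a restatement of $\bar\pi\rho(c)=0$, since $\ker\bar\pi=AK^+$.

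Second, the premise $c^m\in N$ is false. Grade $\Oc_q(SL_2(\C))$ by torus weights, with $a,b,c,d$ of weights $(1,1),(1,-1),(-1,1),(-1,-1)$. Every generator $x^my^m$ of $N$ then has weight in $(2m\Z)^2$, while $c^m$ has weight $(-m,m)$. What is true is $c^m\in\ker\pi=\Oc_q(SL_2(\C))N^+$, and that again only yields $\bar\pi\rho(c)=0$.

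Third, these facts together with $q$-commutation cannot suffice on their own. For $q=-1$, the dihedral quotients of Theorem \ref{teo-1}(ii) have $H=\C$, so $\rho(c)\in AK^+$ and all $q$-commutations hold, yet $\rho(c)\neq 0$. Any proof must therefore use $q^2\neq 1$ in an essential way, for instance through $ad-da=(q-q^{-1})bc$. Your proposal does not say how it would do this.

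The paper treats this step by writing $c\in\ker\pi+\ker\rho$ and comparing degrees in the PBW basis of \cite[(I.7.16.)]{Ken}. If you adopt that route, be careful: $\Oc_q(SL_2(\C))/\ker\rho$ carries no grading in general, so the conclusion has to be extracted from $\ker\rho$ being a Hopf ideal and from the defining relations. Once $\rho(c)=\rho(b)=0$ is secured, $\sigma(\Gamma)\subseteq\bar L$ and your Step 4 go through as you describe.

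One small addition is needed before the last application of Proposition \ref{propiso}. You must check that $A_{L,\sigma}/J_\delta$ still fits into $\Oc(\Gamma)\hookrightarrow A_{L,\sigma}/J_\delta\twoheadrightarrow H$. The map on $\Oc(\Gamma)$ stays injective because its composite with the map to $A$ is $\bar\iota$. On the right, $\widehat\pi(J_\delta)$ is generated by $\widehat\pi(\bar a^p)-1$, so the quotient is $\C(\Z_\ell/N)\cong H$.
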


\begin{proof}
Given $\rho: \Oc_q(SL_2(\C)) \to A$, consider $K=\rho(\Oc(PSL_2(\C))$. Then $K$ is commutative and normal over $A$ so, by Proposition \ref{ff} the following sequence is exact

\begin{center}
$K \hookrightarrow A \twoheadrightarrow H \simeq A/AK^{+}$
\end{center}

As $K$ is a commutative Hopf algebra, there exists a group $\Gamma$ and a injective map $\sigma:\Gamma \hookrightarrow PSL_2(\C)$ such that $K \cong \Oc(\Gamma)$. Thus we have the commutative diagram

$$\xymatrix {
    \mathcal{O}(PSL_2(\C)) \ar@{^{(}->}[r]^{\iota}   \ar@{->>}[d]^{\sigma^t}
    & \mathcal{O}_q(SL_2(\C)) \ar@{->>}[r]^{\pi} \ar@{->>}[d]^{\rho} & \mathfrak{u}_{2,q}(\mathfrak{sl}_2)^{\circ}  \\
    \Oc(\Gamma) \ar@{^{(}->}[r]^{\bar{\iota}}  & A \ar@{->>}[r]^{\overline{\pi}} & H
    }$$

Since $\rho(\Oc(PSL_2(\C)))=K$ and $\rho(\Oc_q(SL_2(\C)))=A$ then
$$AK^{+}  = \rho(\Oc_q(SL_2(\C))) (\rho(\Oc(PSL_2(\C))))^{+} 
		= \rho(\Oc_q(SL_2(\C))(\Oc(PSL_2(\C)))^{+}) 
		= \rho(\Ker \pi).$$

That is, $\Ker(\bar{\pi})= \rho(\Ker{\pi})$. So $\Ker \pi \subseteq \Ker \bar{\pi}\rho$ and there exists an epimorphism $r: \mathfrak{u}_{2,q}(\mathfrak{sl}_2)^{\circ} \to H$ that makes the following diagram commutative

\begin{equation*} 
\xymatrix{
    \mathcal{O}(PSL_2(\C)) \ar@{^{(}->}[r]^{\iota}   \ar@{->>}[d]^{\sigma^t}
    & \mathcal{O}_q(SL_2(\C)) \ar@{->>}[r]^{\pi} \ar@{->>}[d]^{\rho} & \mathfrak{u}_{2,q}(\mathfrak{sl}_2)^{\circ} \ar@{->>}[d]^{r}  \\
    \Oc(\Gamma) \ar@{^{(}->}[r]^{\bar{\iota}}  & A \ar@{->>}[r]^{\overline{\pi}} & H
    }
\end{equation*}

Now we want to see that there exists some subgroup $L \subseteq SL_2(\C)$ such that the following diagram is commutative.

\begin{equation} \label{diag1}
\xymatrix{
    \Oc(PSL_2(\C)) \ar@{^{(}->}[r]^{\iota} \ar@{->>}[d]^{u_L} \ar@/_1.9pc/[dd]_{\sigma^t} 
    & \Oc_q(SL_2(\C)) \ar@/^1.9pc/[dd]^(0.65){\rho} \ar@{->>}[r]^{\pi} \ar@{->>}[d]^{v_L}  & 
    \mathfrak{u}_{2,q}(\mathfrak{sl}_2)^{\circ} \ar@{->>}[d]^{w_L} \ar@/^1.9pc/[dd]^{r} \\
    \Oc(\bar{L}) \ar@{^{(}->}[r]^{\iota_L} \ar@{-->}[d]^{u}  & \mathcal{O}_q(L) \ar@{->>}[r]^{\pi_L} \ar@{-->}[d]^{v} & \mathfrak{u}_{2,q}(\mathfrak{l})^{\circ} \ar@{-->}[d]^{w} \\
    \Oc(\Gamma) \ar@{^{(}->}[r]^{\bar{\iota}} & A \ar@{->>}[r]^{\bar{\pi}} & H
    }
\end{equation}

Since $H$ is a quotient of $\mathfrak{u}_{2,q}(\mathfrak{sl}_2)^{\circ}$, $H^{\circ}$ is a subalgebra of $\mathfrak{u}_{2,q}(\mathfrak{sl}_2)$. Hence $H^{\circ}$ is generated either by 

\begin{enumerate}
\item[(i)] $k,e,f$. In this case, $H^{\circ}= \mathfrak{u}_{2,q}(\mathfrak{sl}_2)$ and $L=SL_2(\C)$.
\item[(ii)] $k,e$. In this case, $H^{\circ}= \mathfrak{u}_{2,q}(\mathfrak{b}^{+})$ and $L=B^{+}$.
\item[(iii)] $k,f$. In this case, $H^{\circ}= \mathfrak{u}_{2,q}(\mathfrak{b}^{-})$ and $L=B^{-}$.
\item[(iv)] $k$. In this case, $H^{\circ}= \mathfrak{u}_{2,q}(\mathfrak{h})$ and $L=T$.
\item[(v)] $k^s$ for some $s$. In this case, $\Sigma=(k^s)\subseteq T$ and $\C \Sigma=H^{\circ} \subseteq \mathfrak{u}_{2,q}(\mathfrak{h})$. Let $w:\mathfrak{u}_{2,q}(\mathfrak{h})^{\circ} \to H=(\C \Sigma)^{\circ} \cong \C \Sigma$ and $\C N \cong \ker w$ with $N$ a subgroup of $\Z_{\ell}$. Then $N=(p) \cong \Z_t$ for some $t$ and $\bar{a}^p \in G(A_{L,\sigma})$.
\end{enumerate}

In all these cases, it follows by the construction we did before that $\Oc_q(L)$ fits into an exact sequence $\Oc(\bar{L}) \hookrightarrow \Oc_q(L) \twoheadrightarrow \mathfrak{u}_{2,q}(\mathfrak{l})^{\circ} $ and the diagram

\begin{equation*} 
\xymatrix{
    \mathcal{O}(PSL_2(\C)) \ar@{^{(}->}[r]^{\iota}   \ar@{->>}[d]^{u_L}
    & \mathcal{O}_q(SL_2(\C)) \ar@{->>}[r]^{\pi} \ar@{->>}[d]^{v_L} & \mathfrak{u}_{2,q}(\mathfrak{sl}_2)^{\circ} \ar@{->>}[d]^{w_L}  \\
    \Oc(\bar{L}) \ar@{^{(}->}[r]^{\iota_L}  & \Oc_q(L) \ar@{->>}[r]^{\pi_L} & \mathfrak{u}_{2,q}(\mathfrak{l})^{\circ}
    }
\end{equation*}

 commutes.

To prove that the diagram (\ref{diag1}) is commutative, we will first prove that $\ker v_L \subseteq \ker \rho$ so there exists $v : \Oc_q(L) \to A$ and then define $u= v \circ i_L$.

Recall that $\ker v_L$ is $\{0\}$ in case (i), $(c)$ in case (ii), $(b)$ in case (iii) and $(b,c)$ in cases (iv) and (v). We will only prove that $\rho(c)=0$. With the same argument one can prove $\rho(b)=0$. As $\bar{\pi}\rho=r\pi$ we have that $\bar{\pi} \rho(c)=0$. Suppose that $\rho(c) \neq 0$. Then $0 \neq \rho(c) \in \Ker \bar{\pi} = \rho(\Ker \pi)$ and $c \in \ker \pi + \ker \rho$. We can write $$c= \sum_{\alpha} \lambda_{\alpha} (a^{2m}-1)^{\alpha_1} (b^m)^{\alpha_2} (c^m)^{\alpha_3} (d^{2m}-1)^{\alpha_4} + \ker \rho \quad \text{for some } \lambda_{\alpha} = \lambda_{(\alpha_1,\alpha_2,\alpha_3,\alpha_4)}.$$

By \cite[(I.7.16.)]{Ken} $S=\{a^lb^mc^s | l,m,s \geq 0\} \cup \{ b^m c^s d^t | m,s \geq 0 \text{ and } t>0 \}$ is a PBW basis of $\Oc_q(SL_2(\C))$, so comparing degrees we have that $\lambda_{\alpha}=0$ for all $\alpha$. That is, $c \in \Ker \rho$ as we want to prove.

Now let $A_{L, \sigma}$ be the quotient of $\Oc_q(L)$ that is the pushout given in Proposition \ref{prop2mu}:

$$\xymatrix{
    \mathcal{O}(\bar{L}) \ar@{^{(}->}[r]^{\iota_L} \ar@{->>}[d]^{u} 
    & \mathcal{O}_q(L) \ar@{->>}[r]^{\pi_L} \ar@{->>}[d]^{s} & 
    \mathfrak{u}_{2,q}(\mathfrak{l})^{\circ} \ar@{=}[d] \\
    \Oc(\Gamma) \ar@{^{(}->}[r]^{\widehat{\iota}}  & A_{L, \sigma} \ar@{->>}[r]^{\widehat{\pi}}  & \mathfrak{u}_{2,q}(\mathfrak{l})^{\circ}
    }$$

where $\mathfrak{u}_{2,q}(\mathfrak{l})^{\circ}/(\widehat{\pi}(\bar{a}^p)-1) \simeq H$. By the commutativity of (\ref{diag1}), $v \iota_L= \bar{\iota}u$ so there exists a unique morphism $t: A_{L, \sigma} \to A$ that makes the diagram commutative.

\begin{equation*}
\begin{gathered}
\xymatrix{
\Oc(\bar{L}) \ar[r]^{\iota_L} \ar[d]_{u} & \Oc_q(L) \ar[d]^{s} \ar@/^1.2pc/[ddr]^{v} \\
\Oc(\Gamma) \ar[r]_{\widehat{\iota}} \ar@/_1.2pc/[drr]_{\bar{\iota}} 
  & A_{L,\sigma} \ar[dr]^{t} \ar@{}[ul]|{\ulcorner} \\
& & A
}
\end{gathered}
\end{equation*}

So $t \widehat{\iota}=\bar{\iota}$. Also, for any $x \in A_{L,\sigma}$ $ w \widehat{\pi}(x)= w \widehat{\pi} s(y) = w \pi_L (y)= \bar{\pi}v(y)= \bar{\pi} ts(y)=\bar{\pi}t(x)$ for some $y \in \Oc_q(L)$. Therefore, we obtain the following commutative diagram

$$\xymatrix {
    \mathcal{O}(PSL_2(\C)) \ar@{^{(}->}[r]^{\iota} \ar@{->>}[d]^{u_L} 
    & \mathcal{O}_q(SL_2(\C)) \ar@{->>}[r]^{\pi} \ar@{->>}[d]^{v_L} & \mathfrak{u}_{2,q}(\mathfrak{sl}_2)^{\circ} \ar@{->>}[d]^{w_L} \\
    \mathcal{O}(\bar{L}) \ar@{^{(}->}[r]^{\iota_L} \ar@{->>}[d]^{u}  & \mathcal{O}_q(L) \ar@{->>}[r]^{\pi_L} \ar@{->>}[d]^{s} & \mathfrak{u}_{2,q}(\mathfrak{l})^{\circ} \ar@{=}[d] \\
    \Oc(\Gamma) \ar@{^{(}->}[r]^{\widehat{\iota}} \ar@{=}[d] & A_{L,\sigma} \ar@{->>}[r]^{\widehat{\pi}} \ar@{->>}[d]^{t} & \mathfrak{u}_{2,q}(\mathfrak{l})^{\circ} \ar@{->>}[d]^{w} \\
    \Oc(\Gamma) \ar@{^{(}->}[r]^{\bar{\iota}} & A \ar@{->>}[r]^{\bar{\pi}} & H
    }$$

Note that when $H^{\circ}= \mathfrak{u}_{2,q}(\mathfrak{sl}_2), H^{\circ}= \mathfrak{u}_{2,q}(\mathfrak{b}^{+}), H^{\circ}= \mathfrak{u}_{2,q}(\mathfrak{b}^{-})$ or $H^{\circ}= \mathfrak{u}_{2,q}(\mathfrak{h})$, $w=id$. Thus, by Proposition \ref{propiso} it follows that $A \simeq A_{L, \sigma}$. \\

In the case $H^{\circ} \subseteq \mathfrak{u}_{2,q}(\mathfrak{h})$ and $\mathfrak{u}_{2,q}(\mathfrak{h})^{\circ} \simeq \C \Z_{\ell}$, $A$ is a quotient of $A_{L, \sigma}$ and $A_{L, \sigma}$ is a quotient of $\C[\bar{a},\bar{a}^{-1}]$. 
The last step of this proof is to see that there exists a group map $\delta: N \to \widehat{\Gamma}$ with $N= (p) \simeq \Z_t$ subgroup of $\Z_{\ell}$ such that $A \simeq A_{L, \sigma}/J_{\delta}$ where $J_{\delta}=  (\bar{a}^p - \delta(p))$.

\bigskip

Let $\C N \cong \ker w$ with $N$ a subgroup of $\Z_{\ell}$. Then $N=(p) \cong \Z_t$ for some $t$ and $\bar{a}^p \in G(A_{L,\sigma}), \bar{a}^p \neq 1$. Also $\bar{\pi} t (\bar{a}^p)= w \widehat{\pi}(\bar{a}^p)=1$ so $t (\bar{a}^p) \in \Ker(\bar{\pi}) = K^{+}A$. Moreover, since $t(\bar{a}^p)$ is a group like element, we have that $t(\bar{a}^p) \in K$.

As $t(\bar{a}^p) \in G(\Oc(\Gamma)) \simeq \widehat{\Gamma} = (\chi)$, then $t(\bar{a}^p)= \chi^s$ for some s. We define $\delta: N \to \widehat{\Gamma}$ as $\delta(p)= \chi^s$ and consider the ideal of $A_{L, \sigma}$ 

$$J_{\delta}=(\bar{a}^p - \delta(p)).$$

Since $J_{\delta} \subseteq \Ker t$, there exists a unique epimorphism $\eta: A_{L, \sigma}/J_{\delta} \twoheadrightarrow A$. Furthermore, by Proposition \ref{prop1mu}, we know that $A_{L, \sigma}/J_{\delta}$ fits into an exact sequence

$$ \Oc{(\Gamma})/\mathcal{J} \hookrightarrow A_{L, \sigma}/J_{\delta} \twoheadrightarrow  \mathfrak{u}_{2,q}(\mathfrak{l})^{\circ}/\widehat{\pi}(J_{\delta})$$

with $\mathcal{J}= J_{\delta} \cap \Oc(\Gamma)$. Since $\widehat{\pi}(\chi)=1$, $\widehat{\pi}(J_{\delta})$ is the ideal of $\mathfrak{u}_{2,q}(\mathfrak{l})^{\circ}$ given by $(\widehat{\pi}(\bar{a}^p)-1)$ and $\mathfrak{u}_{2,q}(\mathfrak{l})^{\circ}/\widehat{\pi}(J_{\delta}) \simeq H$. To finish the proof, let us see that $\mathcal{J}=0$. Then, by Proposition \ref{propiso}, $A \simeq A_{L,\sigma} /J_{\delta}$.

It is clear that $J_{\delta}=(\bar{a}^p - \delta(p))=(\bar{a}^p\delta(p)^{-1}-1)$.
Let $x \in \mathcal{J}= J_{\delta} \cap \Oc(\Gamma)$ and assume $x \neq 0$. Then,

$$x= \sum_{\alpha} \lambda_{\alpha} \bar{a}^{\alpha_1} (\bar{a}^p\delta(p)^{-1}-1)^{\alpha_2},$$
for some $\lambda_{\alpha}=\lambda_{\alpha_1, \alpha_2}$ with at least one $\alpha_2 \neq 0$. As $\widehat{\pi}(\bar{a}^p) \neq 1$ and 
$$\varepsilon(x)=\widehat{\pi}\widehat{\iota}(x)= \sum_{\alpha} \lambda_{\alpha} \widehat{\pi}(\bar{a}^{\alpha_1}) (\widehat{\pi}(\bar{a}^p)-1)^{\alpha_2}$$
comparing degrees we have that all $\alpha_2=0$, which is a contradiction. So $x=0$, that is, $\mathcal{J}= J_{\delta} \cap \Oc(\Gamma)=0$.

\end{proof}

\printbibliography



\end{document}